\newtheorem{theorem}{Theorem}[section]
\newtheorem{corollary}[theorem]{Corollary}
\newtheorem{lemma}{Lemma}[section]
\newtheorem{proposition}[lemma]{Proposition}
\newtheorem{regtheorem}[lemma]{Theorem}
\newtheorem{regcorollary}[lemma]{Corollary}
\newtheorem{claim}{Claim}[section]
\newtheorem{definition}[claim]{Definition}
\newtheorem{remark}[claim]{Remark}
\makeatletter \@addtoreset{equation}{section} \makeatother
\def\ddt{\frac{d}{dt}}
\def\RR{{\mathrm R}}
\def \2R{{\hat{\RR}}}
\def\Rc{{\mathrm {Rc}}}
\def\SS{{\mathrm S}}
\def\He{\mathrm {Hess}}
\def\id{\mathrm{Id}}
\def\lie{\mathcal{L}}
\def\xi{\partial_{x_i}}
\def\tr{\text{tr}}
\begin{document}

\title{K\"{a}hler Solitons, Contact Structures, and Isoparametric Functions}

%    Information for second author
\author{Hung Tran$^*$}
%\address{Department of Mathematics and Statistics,	Texas Tech University, Lubbock, TX 79409}
%\email{hung.tran@ttu.edu}
\thanks{$^*$ Department of Mathematics and Statistics,
	Texas Tech University, Lubbock, TX 79409, Email: hung.tran@ttu.edu}
%partially supported by NSF grant no. \# }

%    Address of record for the research reported here
%\address{Department of Mathematics,
% Cornell University, New York, NY 14853-4201}
%\email{hungtran@math.cornell.edu}
%    Current address
%\curraddr{}
%    \thanks will become a 1st page footnote.
%\thanks{}

%\date{\today}

\begin{abstract} All known examples of gradient K\"{a}hler-Ricci soliton in real dimension four are toric and the symmetry is intrinsically related to the potential function $f$ and the scalar curvature $\SS$. In this article, we consider the case that $f$ and $\SS$ are functionally dependent and deduce a complete classification, while the independence case is addressed elsewhere. The main theorem recovers all known examples of cohomogeneity one symmetry. We also discover a connection to the theory of isoparametric functions and contact geometry. Indeed, a key ingredient is a new characterization for a deformed Sasakian structure generalizing a classical result.

\end{abstract}
\maketitle
%\tableofcontents

\section{Introduction}

A gradient Ricci soliton (GRS) $(M, g, f, \lambda)$ is a Riemannian manifold with metric $g$, potential function $f$, and a constant $\lambda$ such that, for $\Rc$ denoting the Ricci curvature,  
\begin{equation}
	\label{grs}
	\Rc+\text{Hess}{f}=\lambda g. 
\end{equation}
Such a structure is a self-similar solution to the Ricci flow and plays a crucial role in its analysis. The general theory was introduced by R. Hamilton \cite{H3} and has several celebrated applications including \cite{perelman1, perelman2, bohmwilking, bs091, bs072, CGT21}. Recently, there have been significant breakthroughs towards a Ricci flow program in higher dimensions \cite{brendle18highsurgery, bamler2021compactness, bamler2021structure, bamler2021fundamental, tian1,  ST17KRf, DS20krf, HL24algebraicunique, HJST24}. Dimension four seems to be the most tractable and it is of tremendous interest to investigate corresponding GRS.  

%Notable works include, but not limited to, \cite{brendle18highsurgery, bamler2021compactness, bamler2021structure, bamler2021fundamental} for a general flow and \cite{ST17KRf, DS20krf, HL24algebraicunique, HJST24} for the complex setup; see \cite{bamler23survey} for a recent survey. 

A gradient K\"{a}hler Ricci soliton (GKRS) $(M, g, f, J, \lambda)$ is a GRS such that $(M, g, J)$ is K\"{a}hler for a complex structure $J$. In particular, the soliton vector field $\nabla f$ is automatically holomorphic.
%That is, %for $\nabla$ the unique Levi-Civita connection of $g$,$g(JX, JY)= g(X, Y)$ and the K\"{a}hler form $\omega:= g(\cdot, J\cdot)$ is closed. 
The subject has an extensive literature
%; see, for examples, \cite{tian1, WZ04toric, caohd09, CZ12Kahler, MW15topo, CS2018classification, CF16conical, CD2020expanding, DZ2020rigidity}. In particular, 
and recent efforts lead to the classification of all GKRS surfaces with $\lambda>0$ \cite{CDSexpandshriking19, CCD22finite, BCCD22KahlerRicci, LW23}. Thus, there is realistic hope to obtain a full classification of all GKRS in real dimension four. Indeed, all known examples have toric symmetry which is intrinsically related to the potential function $f$ and the scalar curvature $\SS$. Here we propose a strategy based on the relation between these functions and applicable to all signs of $\lambda$. It turns out that there will be a dichotomy as described below. 

%The following is well-known in symplectic geometry and Morse theory. 
\begin{definition} Two functions are functionally dependent if their gradients are scalar multiples of each other at points they are non-zero. Otherwise, they are functionally independent.  
\end{definition}
%if their gradients are independent of each other almost everywhere.
%The real analyticity of the soliton structure implies that $f$ and $\SS$ are functionally either independent or dependent. 
In a companion article \cite{tran24toric}, we consider the independence case and prove that the soliton admits a torus action under a generic assumption. In this paper, we focus on the dependence case and deduce a complete classification. Indeed, on a GRS, $f$ and $\SS$ are functionally dependent if and only if $\Delta f$ and $|\nabla f|$ are constant on each connected component of a level set of $f$ (see Proposition \ref{rectequivalent}). This property generally defines an isoparametric function; the study of such functions in space forms was motivated by questions in geometric optics \cite{som1918, cartan38, lv37}. The classification in an ambient round sphere, Question 34 in S. T. Yau's list \cite{Yauopen93}, is remarkably deep and has attracted enormous interest by, for example, \cite{nomizu73, Munzner80iso, Abresch83foursix, CCJ07four, chi20four}. Certain aspects of the theory is extended to a general Riemannian manifold \cite{wang87iso, gt13exotic, miyaoka13, QT15, derd21} and will contribute to our analysis.

In the study of a GKRS, the early constructions of examples with maximal symmetry \cite{koi90, caohd96, CV96, caohd97limits, fik03, PTV99quasi}, all have isoparametric potential functions. Without the K\"{a}hler setup, an equivalent condition is considered in \cite{kim17}. It also arises as a consequence of constant scalar curvature \cite{FG16csc, CZ2021rigidity}. Furthermore, in \cite[Section 3]{CDSexpandshriking19}, the authors discovered that an appropriate rescaling of the potential function, on an expanding GKRS, converges to an isoparametric function with respect to a suitable metric. Our first result classifies complete K\"{a}hler GRS surfaces with such a potential function. 

\begin{theorem}
	\label{main1}
	Let $(M, g, J, f)$ be a complete connected non-flat K\"{a}hler GRS of real dimension four. If $f$ and $\SS$ are functionally dependent then the manifold must be either 
	\begin{itemize} 
		\item a product of a constant curvature surface with a $2D$ K\"{a}hler GRS, or
		\item  is foliated by equidistant level sets of $f$ and the soliton equation (\ref{grs}) reduces to ODEs which can be solved explicitly. Each regular level set is a quotient, by a discrete group, of a deformed Sasakian space-form in dimension three and there is at least one singular level set.   
		%of cohomogeneity one, $f$ is invariant by its action, and each principal orbit is a connected deformed homogeneous Sasakian structure with constant holomorphic sectional curvature. (NEED to rephrase it, not necessarily homogeneous)
	\end{itemize}
	
\end{theorem}  
A soliton structure on a manifold can always be lifted to its universal cover and we have a refined statement.

%It is noted that any such compact Sasakian structure gives rise to an almost K\"{a}hler GRS, not necessary complete. The completeness imposes strong restriction. %The simple intuition is that if regular orbits collapse to a point then each must be homeomorphic to a sphere. Precisely, we deduce the following consequence. 
\begin{corollary}
	\label{simply}
	Let $(M, g, J, f)$ be a simply connected irreducible non-compact GKRS of real dimension four. If $f$ and $\SS$ are functionally dependent then the manifold must be of cohomogeneity one. Each principal orbit is a connected homogeneous Sasakian structure with constant holomorphic sectional curvature. The singular set corresponds to a singular orbit of the cohomogeneity one action and is simply-connected. Consequently, the GKRS is of maximal irreducible symmetry (isometry group of dimension four).
\end{corollary}

\begin{remark} 	See \cite{tran23kahler} for the classification of GKRS with such symmetry. In particular, the cohomogeneity one action is given by the automorphism group of the Sasakian structure. There are three models of simply connected Sasakian structures with constant holomorphic sectional curvature \cite{tanno69sasa}. Furthermore, if $\lambda\geq 0$, then the Sasakian structure must be on a sphere or its quotient along the Hopf fibration by a cyclic group, the isometry group is $U(2)$, and the singular orbit is a point or round $\mathbb{CP}^1$. Then, $M$ is either $\mathbb{C}^2$ or a holomorphic line bundle $L^\ell$, $\ell \in \mathbb{Z}$, over $\mathbb{CP}^1$. $\ell$ corresponds to each principal orbit being the quotient by $\mathbb{Z}_{|\ell|}$ of the spherical Sasakian structure and also denotes the first Chern class of the bundle. By \cite[Lemma 1.2]{fik03}, one obtains a shrinking/steady/expanding soliton depending on whether $\ell$ is greater/equal/less than $-2$. If $\lambda<0$, then each Sasakian space-form could arise and the analysis is given in \cite[Theorem 4.20]{DW11coho}. Hence, the classification includes all $U(2)$-invariant metrics constructed by \cite{koi90, caohd96, CV96, caohd97limits, fik03, PTV99quasi}.    
	
In relation to the recent classification of  shrinking GKRS in real dimension four, our assumption recovers all models except for $T^2$-symmetry metrics on $Bl_2(\mathbb{CP}^2)$ and $Bl_1(\mathbb{C}^1\times \mathbb{CP}^1)$. They correspond to the case $f$ and $\SS$ are functionally independent. 
\end{remark}
\begin{remark}
The dimension assumption is crucial as \cite{DW11coho} describes a more general construction in higher dimensions such that $f$ and $\SS$ are functionally dependent.
\end{remark}
%\begin{remark}
%	By Lemma \ref{equivalentconditions}, the assumption is also equivalent to that $\nabla f$ is an eigenvector of its Hessian.  %any of the following:
	%\begin{enumerate}[label=(\roman*)]
	%	\item $|\nabla f|$ is constant along every regular connected component of level sets of $f$.
	%	\item Integral curves of $\nabla f$ are geodesic after reparametrization.
	%	\item $\nabla f$ is an eigenvector of its Hessian (or $\Rc$).
	%\end{enumerate}
%\end{remark}

%\begin{remark} If simply-connected is replaced by compact then there is also a classification \cite{belgun01, BGbookSasakian08}. Any such compact Sasakian structure gives rise to an almost GKRS, not necessary complete.
	%It must be a quotient by a discrete subgroup of the connected component of the isometry group of either $\mathbb{S}^3$, $\widetilde{SL}(2, \mathbb{R})$ the universal cover of ${SL}(2, \mathbb{R})$, and $\text{Nil}^3$, the Heisenberg group, diffeomorphic to $\mathbb{R}^3$, of $3\times 3$ nilpotent real matrices. There are standard Sasakian structures on these models with constant holomorphic sectional curvature $1, -4, -3$ respectively. Each is a circle bundle over a compact Riemann surface.   

%\end{remark}

Furthermore, there is a partial converse.
\begin{corollary}
	\label{coho1}
	Let $(M, g, J, f)$ be a complete connected irreducible K\"{a}hler GRS surfaces of cohomogeneity one in real dimension four. If either $\lambda\neq 0$ or the isometry group is compact, then $f$ and $\SS$ are functionally dependent and each principal orbit is a connected deformed homogeneous Sasakian structure.  
\end{corollary}
%There is also a partial converse statement. 
%\begin{corollary}
%	Let $(M, g, J, f)$ be a K\"{a}hler GRS of real dimension four. If it is cohomogeneity one then $\nabla f$ is an eigenvector of $\Rc$ whenever $\nabla f\neq \vec{0}$. 
%\end{corollary} 

%	of all $3$-dim compact Sasakian manifold with constant holomorphic sectional curvature 
%\begin{itemize}
%	\item $\mathbb{S}^3/\Gamma$ with $\Gamma\subset \mathfrak{Isom}_0(\mathbb{S}^3)= S0(4)$. There is a standard Sasakian metric with constant holomorphic sectional curvature $1$.  
%	\item $\widetilde{SL}(2, \mathbb{R})/\Gamma$ where  $\widetilde{SL}(2, \mathbb{R})$ is the universal cover of ${SL}(2, \mathbb{R})$ and $\Gamma \subset \mathfrak{Isom}_0(\widetilde{SL}(2, \mathbb{R}))$, There is a standard Sasakian metric with constant holomorphic sectional curvature $-4$.
%	\item $\text{Nil}^3/\Gamma$ with $\Gamma \subset \mathfrak{Isom}_0(\text{Nil}^3)$. There is a standard Sasakian metric with constant holomorphic sectional curvature $-3$.
%	\item $\Gamma$ is a d $\mathfrak{Isom}_0$ of the corresponding isometry group. 
%	\item  $\text{Nil}^3$ is the group of $3\times 3$ nilpotent real matrices, also known as the Heisenberg group, which is diffeomorphic to $\mathbb{R}^3$.
%	\item   	 
%\end{itemize}

The results above suggest a connection between a GKRS and a Sasakian structure, a fundamental notion of contact geometry. This arguably is an odd-dimensional counterpart of symplectic geometry and both lie at the heart of classical mechanics. %In general, one can consider either the even-dimensional phase space of a mechanical system or odd-dimensional constant-energy hypersurfaces. 
Thus, it has rich literature; for example, contact transformation was studied by S. Lie \cite{liebook1896}. Here we focus on a direction with more attention to an associated Riemannian metric. The current theory owns much to the foundational work of S. Sasaki, D. Blair, and others \cite{sasaki60, SH62, sas71, blair70, blair76}; see also a recent book \cite{BGbookSasakian08} and a survey \cite{blair19}. 

An odd-dimensional Riemannian manifold $(P, g_P)$- with a vector field $\zeta$, a $1$-form $\eta$, and a tensor field of type $(1, 1)$ $\Phi$- is called an almost contact metric structure if 
  	\[\eta(\zeta)=1, ~~\Phi^2=-\id+\zeta\otimes \eta, \text{  and } g_P(\Phi(X), \Phi(Y))=g_P(X, Y)-\eta(X)\eta(Y).\]
An almost contact metric structure $(P, \zeta, \eta, \Phi, g)$ is called a contact metric structure if %one %further assumes, for some constant $a\neq 0$,
	\[ d\eta(X, Y)=g(X, \Phi(Y)).\]
%Naturally, it is called contact if $a=1$. 
Moreover, a Sasakian structure is a contact metric such that the cone $(M\times \mathbb{R}^+, r^2 g+ dr^2)$ is K\"{a}hler. A deformed contact/Sasakian structure is obtained by a specific rescaling of the Reeb vector field and the transverse metric; see Definition \ref{Sasadeform}.

%That's why Sasakian geometry is generally considered to be ''sandwiched" between two K\"{a}hler geometries. 

%Moreover, the K\"{a}hler form is given by $d(r^2\eta)$. 

%It is well-known that a real hypersurface of a K\"{a}hler manifold is naturally endowed with an almost contact metric structure \cite{BGbookSasakian08}. 
Let $(M, g, J, f)$ be a K\"{a}hler GRS. Let $M_c$ be a regular level set of $f$ and 
\begin{align}
	\label{standardconstruction}
	V:&=\frac{\nabla f}{|\nabla f|},~~ g_c:= g_{\mid M_c}, ~~\zeta_c:=-J(V),~~\eta_c(\cdot):= g(\cdot, \zeta_c), ~~ \Phi_c (\cdot):= -\eta_c(\cdot) V+ J(\cdot).  \end{align}
%Let $$ and $\eta$ be the dual $1$-form to $\zeta$. We define $\Phi$ on $TM_c$ by
%\[\Phi X+\eta(X) V= JX. \]    
Together, $(M_c, \zeta_{c}, \eta_{c}, \Phi_{c}, g_{c})$ is an almost contact metric structure. Our next theorem shows that going from almost contact to deformed contact imposes significant restriction on the soliton structure leading to a full classification.

%A GRS is called shrinking, steady, or expanding depending on the sign of $\lambda$ being positive, zero, or negative, respectively. Clearly, any Einstein manifold is a trivial example of a GRS with $\text{Hess}{f}\equiv 0$ and $\lambda$ being the Einstein constant. An another example is the Gaussian soliton $(\mathbb{R}^{m}, g_{Euc}, \lambda \frac{|x|^2}{2},\lambda)$ for $g_{Euc}$ the standard Euclidean metric. It is interesting that there is freedom to choose the sign of $\lambda$ for a Gaussian soliton. Beyond these constructions, it is noted that many nontrivial examples of GRS are K\"ahler; see \cite{caohd96, caohd97limits, koi90, CV96, fik03, DW11coho}.

\begin{theorem}
	\label{HDcontact}
	Let $(M, g, J, f, \lambda)$ be a complete connected K\"{a}hler GRS. For each regular value $c$, suppose that $(M_c, \zeta_{c}, \eta_{c}, \Phi_{c}, g_{c})$ is a deformed contact structure. Then the soliton is totally determined by a connected Sasakian model $(P, \eta, \zeta, \Phi, g_P)$ which is a Riemannian submersion, with circle fibers, over a K\"{a}hler-Einstein manifold $(N, g_N, J_N)$ with $\Rc_N=k g_N$. That is, there is submersion map $\pi: P\mapsto N$ such that
	\[g_P= \eta\otimes \eta +\pi^\ast g_N, ~~~ d\eta=\pi ^\ast \omega_N.\]
	There is an interval $I$ with coordinate $s$ such that there is a diffeomorphism $\phi: I\times P \mapsto M_o$, a dense subset of $M$,  $f\circ \phi= Bs+C$, and %$f\circ \phi$ is a function from $I$ to $\mathbb{R}$ and  
	\begin{equation} 
		\label{calabi}
		\phi^\ast g= \frac{ds^2}{\alpha(s)}+ \alpha(s) \eta\otimes\eta + (2s+A) \pi ^\ast g_N.\end{equation}
	Here $A, B, C$ are constant and $\alpha$ solves a first order equation, for $n=\dim_{\mathbb{C}}N$, namely  
	\[\lambda(2s+A)=k-\frac{d\alpha}{ds}-\frac{2n\alpha}{2s+A}+B\alpha. \]
	%$F$ and $H$ are determined by the ODE system, for $m=\text{dim}(N)$ and some constant $A\neq 0$,  
	%\begin{align*}
	%	\lambda &= -\frac{H''}{H}-(2m)\frac{F''}{F}+f''\nonumber \\
	%	&= \frac{H^2}{F^4}(2m)-\frac{H''}{H}-2m\frac{H'F'}{HF}+f'\frac{H'}{H} \nonumber\\
	%	&=\frac{k}{F^2}-\frac{2H^2}{F^4}- \frac{F''}{F}-(2m-1)(\frac{F'}{F})^2-\frac{H'F'}{FH}+f'\frac{F'}{F},\\
	%	AH &= f'=A FF'.  	
	% \end{align*} 
There is a boundary point of $I$ such that $\alpha\rightarrow 0$. Furthermore, if $(2s+A)\rightarrow 0$ towards that end point, then  $(P, \eta, \zeta, \Phi, g_P)$ is the standard Sasakian sphere and $(N, g_N, J_N)$ is, up to homothety, isomorphic to a standard complex projective space. 
	
\end{theorem}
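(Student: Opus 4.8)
The plan is to upgrade the pointwise deformed-contact hypothesis to a global cohomogeneity-one structure, read off the metric and the defining ODE from the K\"ahler and soliton equations, and finally analyze the degeneration of the orbits at the boundary. First I would show the hypothesis forces $f$ to be isoparametric. Writing $V=\nabla f/|\nabla f|$ and $\zeta_c=-JV$ as in \eqref{standardconstruction}, the identity $d\eta_c(X,Y)=a\,g_c(X,\Phi_c Y)$ constrains the shape operator of $M_c$, which for a soliton equals $\tfrac{1}{|\nabla f|}\He f$ restricted to $M_c$; combined with \eqref{grs} this should pin $\zeta_c$ and the contact distribution $\mathcal{D}=\ker\eta_c$ as the two eigenspaces of the shape operator, so that $\nabla f$ is an eigenvector of $\Rc$ off the critical set. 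By Subsection \ref{isoparasection} the integral curves of $\nabla f$ are then reparametrized geodesics and $|\nabla f|^2$ is a function of $f$ alone. Flowing along $\nabla f$ identifies every regular level set with a fixed model $P$ and exhibits the dense open set $M_o$ as $I\times P$; choosing $s$ so that $|\nabla s|^2=\alpha(s)$ gives $f=Bs+C$.

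Next I would build the Sasakian model. The distribution $\mathcal{D}$ is $J$-invariant, so $J$ descends to a complex structure on the quotient of $P$ by the Reeb flow; invoking the characterization of deformed Sasakian structures (the key lemma, generalizing the classical cone criterion) shows $(P,\eta,\zeta,\Phi,g_P)$ is Sasakian and, by regularity, a circle bundle $\pi\colon P\to N$ with $d\eta=\pi^\ast\omega_N$ over a complex manifold $(N,J_N)$. The K\"ahler condition $d\omega=0$ is what rigidifies the metric: it forces the horizontal warping to be affine in the moment-map coordinate, producing the coefficient $(2s+A)$, while the Reeb direction carries the free warping $\alpha(s)$; together these give exactly \eqref{calabi}. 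Substituting this ansatz into $\Rc+\He f=\lambda g$ and applying the O'Neill formulas for the (totally geodesic, circle-fibered) submersion, the horizontal components of the equation read $\Rc_N=h(s)\,g_N$ for some function $h$; since $\Rc_N$ is $s$-independent this forces $\Rc_N=k\,g_N$, i.e.\ $N$ is K\"ahler--Einstein, and the remaining Reeb and radial components collapse to the stated first-order equation for $\alpha$.

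The final and, I expect, hardest part is the degeneration analysis. As $\alpha\ge 0$ with the Reeb circle closing up where $\alpha=0$, completeness forces $\alpha\to 0$ at an endpoint of $I$; smooth closing of the circle over $N$ is the generic cap. The delicate case is when $(2s+A)\to 0$ at that same endpoint, so that the whole orbit $P$ collapses to a point $p$. Here I would use that $g$ extends $C^\infty$ across $p$: in geodesic normal coordinates the orbits are geodesic spheres whose intrinsic and extrinsic geometry converge to those of round spheres in $T_pM\cong\mathbb{C}^{n+1}$, and the almost contact data they inherit converge to the structure induced by the ambient complex structure on small spheres of $\mathbb{C}^{n+1}$, namely the standard Sasakian $S^{2n+1}$ Hopf-fibered over $\mathbb{CP}^n$. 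Matching this limit with $d\eta=\pi^\ast\omega_N$ then identifies $(P,\eta,\zeta,\Phi,g_P)$ with the standard Sasakian sphere and $(N,g_N,J_N)$ with $\mathbb{CP}^n$ up to homothety. The main obstacle is controlling the joint vanishing rates of $\alpha$ and $(2s+A)$ so that the metric is genuinely smooth at $p$ and the limiting orbit is \emph{round} rather than merely contact; this rigidity must come from feeding the completeness and $C^\infty$ conditions back into the ODE, not from either ingredient alone.
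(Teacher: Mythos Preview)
Your proposal is essentially correct and tracks the paper's argument closely; a few points where the logical order or mechanism differs are worth flagging.

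First, your route to rectifiability is slightly different from the paper's but ultimately equivalent. You argue that the deformed-contact identity forces $\zeta_c$ and $\mathcal{D}$ to be eigenspaces of the shape operator; indeed, $d\eta_c(\zeta_c,\cdot)=0$ gives $g(L\zeta_c,X)=0$ for horizontal $X$, so $\zeta_c$ is an eigenvector of $L$, and then $J$-invariance of $\He f$ finishes. The paper (Lemma~\ref{HDftransnormal}) instead uses $d\eta_c(\zeta_c,\cdot)=0$ to conclude that $\zeta_c$ has geodesic orbits, and combines this with the Killing property of $W=J\nabla f$ to get $X|W|^2=0$ directly. Both work; the paper's is marginally cleaner because it never needs to discuss the eigenspace structure of $L$ on $\mathcal{D}$, which the deformed-contact identity alone does not pin down.

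Second, and more substantively, you invoke ``regularity'' of the Sasakian structure (i.e.\ the existence of $N$ as a smooth quotient manifold with $\pi:P\to N$ a genuine submersion) \emph{before} the boundary analysis. In the paper this is not an assumption but a conclusion: prior to the collapse lemmas one only has the transverse Einstein condition $\Rc^\perp=kg^\perp$ on the Reeb foliation of $P$, and it is precisely the boundary analysis (Lemmas~\ref{pointsingularorbit} and~\ref{fibercollapse}) that produces $N$ as either the focal variety of a critical level or, in the point-collapse case, as the $\mathbb{CP}^n$ base of the Hopf fibration. Your ordering would leave a gap if the Reeb foliation were irregular or merely quasi-regular.

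Third, for the point-collapse case you anticipate needing to ``feed the completeness and $C^\infty$ conditions back into the ODE'' to force roundness. The paper's mechanism is more elementary and does not touch the ODE: one writes the metric in polar coordinates about $p$, lets $t\to 0$, and reads off directly that $H'(0)^2\eta\otimes\eta+F'(0)^2g^\perp$ must equal the round metric on the unit sphere in $T_pM$. Constancy of the $\Phi$-sectional curvature then follows from Proposition~\ref{Rcdeformed}, and Tanno's classification of simply connected Sasakian space forms identifies $P$ as the standard Sasakian sphere.
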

\begin{remark}
	The main content of the theorem is about $|\nabla f|$ constant on each connected component of a level set of $f$ (rectifiable), constant Ricci, and the explicit Sasakian structure. %In general, only quasi-regular Sasakian structures with compact leaves are known to be a submersion over a K\"{a}hler manifold \cite{BGbookSasakian08}.  
	Equation (\ref{calabi}) is a variant of the Calabi ansatz \cite{Calabi82}. There are also other studies which construct K\"{a}hler GRS from Sasaki-Einstein manifolds \cite{fw11, blair19}.
	% Also, at each finite end point of $I$, $\alpha$ must satisfy certain conditions to make the metric smooth \cite{DW11coho,		 tran23kahler}. %In particular, if $\lambda\geq 0$ then $k>0$.
	\end{remark}
%\begin{remark}
%	 There are also other studies which construct K\"{a}hler GRS from Sasaki-Einstein manifolds \cite{fw11, blair19}.
%\end{remark}

%\begin{remark}
%	Clearly, there is a gap from almost contact to deformed contact. The construction in \cite{DW11coho} works for $d\eta$ being a rational function of the tranverse metric.  
%\end{remark}

%In \cite{DW11coho}, the authors show that it is possible to construct a GRS via deformed Sasakian structures each is a circle bundle over a K\"{a}hler-Einstein manifold. Here we give a result in the reverse direction. 

Indeed, the proofs of Theorem \ref{main1} and \ref{HDcontact} both rely on understanding a deformed contact structure. Thus, the following result might be of independent interest. 

\begin{theorem}
	\label{chardeformed}
	An almost contact metric manifold $(P, \zeta, \eta, \Phi, g)$ satisfies
	\begin{equation*}
		\label{newcon}
		(\nabla_X \Phi) (Y) =  bg(X, Y)\zeta- \eta(Y)bX.
	\end{equation*} 
	for every vector fields $X$ and $Y$ if and only if 
	\begin{enumerate} [label=(\roman*)]
		\item for $b\neq 0$, it is a deformed Sasakian structure;
		\item for $b=0$, $\zeta$ is parallel and the universal cover is a product of a line or circle with a K\"{a}hler manifold whose almost complex structure is induced by $\Phi$. 
	\end{enumerate}	
\end{theorem}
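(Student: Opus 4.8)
The plan is to convert the single tensorial hypothesis (\ref{newcon}) into first-order equations for $\zeta$ and $\eta$, and then read off the two regimes $b\neq 0$ and $b=0$ as a deformed Sasakian structure and a K\"ahler product, respectively. I would begin with the standard algebraic identities valid on any almost contact metric manifold, namely $\Phi\zeta=0$, $\eta\circ\Phi=0$, and $\eta(\cdot)=g(\cdot,\zeta)$ (the last follows by setting $Y=\zeta$ in the compatibility relation and using $|\Phi\zeta|^2=g(\zeta,\zeta)-\eta(\zeta)^2=0$). Substituting $Y=\zeta$ into (\ref{newcon}) and using $\Phi\zeta=0$ gives $\Phi(\nabla_X\zeta)=b\bigl(X-\eta(X)\zeta\bigr)$; applying $\Phi$ once more and noting $\eta(\nabla_X\zeta)=\tfrac12 X(|\zeta|^2)=0$ collapses this to a clean pair of identities,
\[ \nabla_X\zeta=-b\,\Phi X, \qquad d\eta(X,Y)=2b\,g(X,\Phi Y), \]
where the second follows from the first by computing $d\eta(X,Y)=(\nabla_X\eta)(Y)-(\nabla_Y\eta)(X)$ and using the skew-symmetry of $\Phi$. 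Thus for $b\neq 0$ the structure is automatically deformed contact with constant $a=2b$, and for $b=0$ both $\zeta$ and $\Phi$ are parallel.

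For $b\neq 0$ I would reduce to the classical characterization being generalized, namely that a contact metric structure is Sasakian precisely when $(\nabla_X\Phi)(Y)=g(X,Y)\zeta-\eta(Y)X$. The device is a constant homothety: set $\tilde g=b^2 g$, $\tilde\eta=b\,\eta$, $\tilde\zeta=b^{-1}\zeta$, and $\tilde\Phi=\Phi$. One checks that this is again an almost contact metric structure, that it shares the Levi-Civita connection $\tilde\nabla=\nabla$ (constant scalings do not alter the Christoffel symbols), and that (\ref{newcon}) transforms into exactly $(\tilde\nabla_X\tilde\Phi)(Y)=\tilde g(X,Y)\tilde\zeta-\tilde\eta(Y)X$. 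The classical theorem then makes the rescaled structure Sasakian, so the original is by definition deformed Sasakian (recall that a deformed Sasakian structure is, up to a constant homothety, Sasakian; equivalently, it is a normal deformed contact metric structure), and running the homothety in reverse yields the converse. For a self-contained argument instead, one substitutes (\ref{newcon}) into the torsion-free Nijenhuis formula $N_\Phi(X,Y)=(\nabla_{\Phi X}\Phi)Y-(\nabla_{\Phi Y}\Phi)X-\Phi(\nabla_X\Phi)Y+\Phi(\nabla_Y\Phi)X$ and obtains $N_\Phi(X,Y)=-2b\,g(X,\Phi Y)\,\zeta$, which in view of the formula for $d\eta$ says precisely that the almost contact structure is normal; normality together with the deformed contact condition is the definition of a deformed Sasakian structure.

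For $b=0$ the two identities become $\nabla\Phi=0$ and $\nabla\zeta=0$, so $\zeta$ is a parallel unit field and both $\mathbb{R}\zeta$ and $\ker\eta$ are mutually orthogonal parallel distributions, the latter integrable with totally geodesic leaves. Completeness together with the de Rham decomposition then splits $M$ as a Riemannian product of a one-dimensional factor (a line, or a circle when the integral curves of $\zeta$ close up) with a leaf $N$ of $\ker\eta$; on $N$ the tensor $\Phi$ restricts to a parallel, metric-compatible almost complex structure, whence $N$ is K\"ahler, giving case (ii). The converse here is immediate: on such a product, with $\zeta$ the unit field along the one-dimensional factor and $\Phi$ the K\"ahler structure of $N$ extended by $\Phi\zeta=0$, the product connection makes $\Phi$ parallel, which is exactly (\ref{newcon}) with $b=0$.

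The main obstacle is the $b\neq 0$ equivalence with the genuinely structural notion. The homothety reduction is short but hinges on the precise definition of deformed Sasakian and on verifying that the rescaling preserves normality (equivalently, K\"ahlerity of the associated cone); the self-contained route instead requires carefully reproducing the general relation between $\nabla\Phi$ and the Nijenhuis tensors $N^{(1)}, N^{(2)}$ and checking that the deformed contact condition forces the fundamental two-form to be closed, so that every Nijenhuis contribution except the contact term drops out of the expression for $\nabla\Phi$. A secondary subtlety, in the $b=0$ case, is that without completeness one obtains only a local product, so the global line-or-circle dichotomy genuinely relies on completeness.
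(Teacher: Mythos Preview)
Your argument is essentially correct; for $b=0$ it coincides with the paper's (both show $\zeta$ and $\Phi$ are parallel, split off the one-dimensional factor, and use parallelism of $\Phi$ to obtain K\"ahlerity of the complementary leaf).

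For $b\neq 0$ your route is genuinely different from the paper's. The paper does \emph{not} use a homothety. Its device is Theorem~\ref{varcon}, which tracks how $(\nabla_X\Phi)Y$ transforms under a $\pm(1,c)$-deformation $g\mapsto c^2g^\perp+\eta\otimes\eta$; since this rescales only the transverse metric, it does \emph{not} preserve the Levi-Civita connection, and the paper spends Lemmas~\ref{killing}--\ref{nablaXPhi2} setting up O'Neill's foliation formalism to compute the new $\nabla'\Phi'$. Taking $c=\sqrt{b}$ then lands on the classical $b=1$ case directly within the paper's definition of ``deformed Sasakian'' (namely, a $\pm(1,F)$-deformation of a Sasakian structure). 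Your homothety $\tilde g=b^2g$ is much simpler precisely because it \emph{does} preserve $\nabla$, so the $\nabla\Phi$ equation rescales trivially; but a homothety is an $(H,H)$-deformation, not a $(1,F)$-deformation, so your parenthetical ``recall that a deformed Sasakian structure is, up to a constant homothety, Sasakian'' does not match the paper's definition and leaves a small gap. It is easily filled: an $(H,H)$-deformation of a Sasakian structure factors as a Sasakian-preserving $(H,\sqrt{H})$-deformation (Remark~\ref{scalingdiff}) followed by a $(1,\sqrt{H})$-deformation, so homothetic-to-Sasakian indeed implies deformed Sasakian in the paper's sense. Your alternative Nijenhuis computation is correct and arguably cleaner; the resulting identification ``normal deformed contact metric $=$ deformed Sasakian'' again needs the same one-line $(1,F)$-deformation argument to match the paper's definition. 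One convention issue: the paper uses $2\,d\eta(X,Y)=(\nabla_X\eta)Y-(\nabla_Y\eta)X$ (see Lemma~\ref{computeA}), so the deformed-contact constant comes out as $a=b$, not $2b$.
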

\begin{remark}
The case $b=1$ is a classical result \cite[Theorem 7.3.16]{BGbookSasakian08}.   
\end{remark}
%\begin{remark} The Riemannian metric allows the identification of a vector as an $1$-form and an $(1, 1)$-form as an $2$-form. Thus, (\ref{newcon}) can be rewritten: $\nabla_X \Phi= 2b X\wedge \zeta$.  
%\end{remark}
Theorem \ref{chardeformed} has an important consequence. Let $(M, g, J)$ be a {K}\"{a}hler manifold and $P$ a real hypersurface with unit normal $V$ and shape operator $L X =\nabla_X V.$ Let $\zeta=-JV$, $\eta$ the dual 1-form, and $\Phi(\cdot):= J(\cdot)-\eta(\cdot)V.$ It is immediate that $(P, \zeta, \eta, \Phi, g)$ is an almost contact metric structure. 
\begin{theorem}
	\label{hyperdeformedSasa}
	$L=\alpha \text{Id}+\beta \zeta \otimes \eta$ for a constant $\alpha$ iff $(P, \eta, \zeta, \Phi, g)$ is
	\begin{enumerate}[label=(\roman*)]
		\item for $\alpha\neq 0$, a deformed-Sasakian structure. 
		\item for $\alpha=0$, locally a Riemannian product. 
	\end{enumerate}
	
\end{theorem}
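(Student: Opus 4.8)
The plan is to reduce the statement to Theorem~\ref{chardeformed} by computing the covariant derivative of $\Phi$ intrinsically along $M$. Write $\nabla^N$ for the Levi-Civita connection of the Kähler ambient $(N,g,J)$ and $\nabla$ for the induced connection on the hypersurface $M$. Since $V$ is a unit normal, $\nabla^N_X V = LX$ is automatically tangent to $M$ and $L$ is $g$-symmetric, and the Gauss formula reads $\nabla^N_X Y = \nabla_X Y - g(LX,Y)V$ for $X,Y$ tangent to $M$. I would first record these facts together with the identity $\eta(Y)=g(JY,V)$, so that $\Phi Y = JY-\eta(Y)V$ is exactly the tangential projection of $JY$ onto $TM$.

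The central step is a single computation valid for \emph{any} real hypersurface of a Kähler manifold. Differentiating $\Phi Y = JY-\eta(Y)V$, using $\nabla^N J=0$, the Weingarten relation $\nabla^N_X V = LX$, and projecting the Gauss formula back onto $TM$, I expect to arrive at
\[(\nabla_X\Phi)(Y) = g(LX,Y)\,\zeta - \eta(Y)\,LX.\]
Once this identity is in hand, the entire theorem becomes an exercise in comparing it with the defining equation~(\ref{newcon}). For the forward direction, substituting $LX=\alpha X+\beta\,\eta(X)\zeta$ into the right-hand side, the two contributions $\beta\,\eta(X)\eta(Y)\zeta$ cancel, and what remains is precisely $(\nabla_X\Phi)(Y)=\alpha\,g(X,Y)\zeta-\alpha\,\eta(Y)X$, i.e.\ (\ref{newcon}) with $b=\alpha$. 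Theorem~\ref{chardeformed} then yields conclusion (i) when $\alpha\neq 0$ and (ii) when $\alpha=0$. The crucial structural observation here is that the $\beta\,\zeta\otimes\eta$ part of $L$ is exactly the piece that drops out of $\nabla\Phi$, which explains why only $\alpha$ must be constant while $\beta$ remains unconstrained.

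For the converse, Theorem~\ref{chardeformed} supplies (\ref{newcon}) with a constant $b$ (inherited from the constant $a$ in the deformed contact condition $d\eta=a\,g(\cdot,\Phi\cdot)$). Equating (\ref{newcon}) with the key identity and setting $Y=\zeta$ gives $LX = bX + \bigl[g(LX,\zeta)-b\,\eta(X)\bigr]\zeta$; taking $X=\zeta$ then forces $L\zeta=g(L\zeta,\zeta)\,\zeta$. Using the symmetry $g(LX,\zeta)=g(X,L\zeta)=g(L\zeta,\zeta)\,\eta(X)$ and substituting back produces $L=b\,\id+\beta\,\zeta\otimes\eta$ with $\beta=g(L\zeta,\zeta)-b$ and constant $\alpha=b$, as required.

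The only genuine obstacle I anticipate is obtaining the key identity with all signs correct, since it is sensitive to the conventions for $L$, for $\zeta=-J V$, and for the Gauss formula; once that display is established, both implications are linear algebra modulo the input of Theorem~\ref{chardeformed}. A secondary point to verify carefully is that $b$ is truly a constant and not merely a function, which is what guarantees that $\alpha$ in the conclusion is constant; this follows from the constancy of $a$ built into the deformed contact hypothesis and the characterization in Theorem~\ref{chardeformed}.
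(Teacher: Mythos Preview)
Your proposal is correct and follows essentially the same route as the paper: derive the general hypersurface identity $(\nabla_X\Phi)Y = g(LX,Y)\zeta - \eta(Y)LX$ from the Gauss formula and $\nabla J=0$, substitute $L=\alpha\,\id+\beta\,\zeta\otimes\eta$, observe the $\beta$-terms cancel, and invoke Theorem~\ref{chardeformed}. The paper's proof is slightly terser on the converse direction, which you spell out more carefully by evaluating at $Y=\zeta$ and using the symmetry of $L$; otherwise the arguments coincide.
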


The organization of the paper is as follows. Section \ref{preliminary} will recall definitions and preliminary results. The proofs of Theorems \ref{chardeformed} and \ref{hyperdeformedSasa} will be given in Section \ref{defornedsasakian}: using the theory of foliation and toolkit developed by B. O'Neill \cite{ONeill66} to track how the covariant derivative of $\Phi$ changes under a deformation. Section \ref{kahlerandcontac} investigates the case each level set of a potential function $f$ is a deformed contact structure. Then the metric can be written in an explicit way and analysis of the metric's smoothness leads to Theorem \ref{HDcontact}. Finally, the proofs of Theorem \ref{main1}, Corollaries \ref{simply} and \ref{coho1} will be given in Section \ref{dim4} combining our earlier developments.     

\subsection{Acknowledgment}  We benefit greatly from discussion with Profs. Ronan Conlon, McKenzie Wang, Catherine Searle, and Detang Zhou. We are also grateful to anonymous referees for constructive comments and suggestions.   

\section{Preliminaries}
\label{preliminary}

In this section, we recall preliminary results and certain observations which will be used throughout the article. 
  
\subsection{GRS and K\"{a}hlerity}
Here we recall the definition of a gradient Ricci soliton, the K\"{a}hler setup, and some identities. A Riemannian manifold $(M^n, g)$ with a potential function $f$ is called a GRS if, for $\Rc$ denoting the Ricci curvature and $\lie$ the Lie derivative, 
\begin{equation*}
	\text{Rc}+\frac{1}{2}\lie_{\nabla f} g =\Rc+ \He f= \lambda g.
\end{equation*}
We also recall the following well-known identities \cite{chowluni}:
\begin{align}
	\label{deltaS}
	\SS + \triangle f &= n\lambda,\\
	\label{rcandf}
	\Rc(\nabla{f})&=\frac{1}{2}\nabla{\SS},\\ %=\delta \Rc,\\
	\label{nablafandS}
	\SS+|\nabla f|^2-2\lambda f &= \text{constant},\\
	\label{lapS}
	\triangle\SS+2|\text{Rc}|^2 &= \left\langle{\nabla f,\nabla \SS}\right\rangle+2\lambda \SS.
\end{align}
%Taking the trace yields, for $\SS= \tr_g \Rc$ the scalar curvature,
%\begin{equation}
%	\label{deltaS}
%	\SS + \triangle f = n\lambda.
%\end{equation}
%Due to its symmetry, the Ricci curvature is frequently considered as an endormorphism on $TM$. Thus, via the second Bianchi's identity, we deduce that
%\begin{equation}
%	\label{rcandf}
%	\Rc(\nabla{f})=\frac{1}{2}\nabla{\SS}=\delta \Rc.
%\end{equation}
%Here $\delta$ is the divergence operator or the co-differential, for an orthonormal basis,
%\[ \delta \Rc (X)= \sum_{i} g((\nabla_{e_i}\Rc)X, e_i).\]
%Consequently, the following is considered as a conservation law,
%\begin{equation}
%\label{nablafandS}
%\SS+|\nabla f|^2-2\lambda f = \text{constant}.	
%\end{equation}
%Here is another interesting identity \cite{chowluni}, 
%\begin{align}
%	\label{lapS}
%	\triangle\SS+2|\text{Rc}|^2 &= \left\langle{\nabla f,\nabla \SS}\right\rangle+2\lambda \SS.
%\end{align}
%\begin{remark}%remark 2.1
%	If $\lambda \geq 0$, then $\SS\geq 0$ by the maximum principle and equation $(\ref{lapS})$.  Moreover, such a complete GRS has positive scalar curvature unless it is isometric to the flat Euclidean space  \cite{zhang09completeness, chenbl09}.
%\end{remark} 

In the presence of a complex structure, there are further observations. When a manifold $M$ is of an even dimension, an almost complex structure is defined to be a smooth section $J$ of the bundle of endormorphisms $\text{End}(TM)$ such that $J^2=-\id.$ $J$ is said to be integrable if it is induced from an atlas of complex charts with holomorphic transition functions. $(M, g, J)$ is called an almost Hermitian manifold and $g$ a Hermitian metric if $g(JX, JY)=g(X, Y).$ The fundamental $2$-form or K\"{a}hler form is given by $\omega_g(X, Y)= g(X, JY).$ $(M, g, J)$ is called almost K\"{a}hler if $d\omega=0$. When $J$ is integrable, one upgrades an almost Hermitian to Hermitian and almost K\"{a}hler to K\"{a}hler. For a Riemannian manifold to be K\"{a}hler, the following is well-known.
\begin{proposition}\cite[Proposition 3.1.9]{BGbookSasakian08}
	\label{charKahler}
	Let $(M, g, J)$ be an almost Hermitian real manifold. $(M, g, J)$ is K\"{a}hler iff $\nabla J=0$
	%The followings are equivalent:
	%\begin{enumerate} [label=(\roman*)]
	%	\item $\nabla J=0$,
	%	%\item $\nabla \omega_g=0$,
	%	\item 
	%\end{enumerate}
\end{proposition} 

\begin{definition}
	$(M, g, J, f)$ is a K\"{a}hler GRS if $(M, g, f)$ is a GRS and $(M, g, J)$ is a K\"{a}hler manifold.
\end{definition}
	
	It is crucial to observe that, on a K\"{a}hler manifold $(M, g, J)$, $\Rc$ is $J$-invariant. Thus, the following is well-known \cite{fik03, koi90, caohd09}.
	%for a K\"{a}hler GRS, so is $\He f$ which leads to the followings.  

\begin{lemma}
	\label{killing1}
	Let $(M, g, J)$ be a K\"{a}hler manifold and $f:M \mapsto \mathbb{R}$ such that $\He f$ is $J$-invariant. Then, we have the following:
	\begin{enumerate} [label=(\roman*)]
		\item $J(\nabla f)$ is a Killing vector field.
		\item $\nabla f$ is an infinitesimal automorphism of $J$. 
	\end{enumerate}
\end{lemma}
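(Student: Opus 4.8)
The plan is to encode everything through the self-adjoint endomorphism $H$ of $TM$ defined by $g(HX,Y)=\He f(X,Y)=g(\nabla_X\nabla f,Y)$, so that $HX=\nabla_X\nabla f$. The hypothesis that $\He f$ is $J$-invariant, $\He f(JX,JY)=\He f(X,Y)$, I would translate—using the skew-adjointness $g(JU,W)=-g(U,JW)$ coming from the Hermitian condition—into the single algebraic statement that $H$ commutes with $J$, i.e.\ $HJ=JH$. Two further structural facts enter repeatedly: $H$ is symmetric (it is a Hessian), and $\nabla J=0$ by Proposition \ref{charKahler} since $(M,g,J)$ is K\"ahler. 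These three inputs ($HJ=JH$, $H$ symmetric, $\nabla J=0$) are all that both parts require.

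For (i), I would differentiate $J\nabla f$ and exploit parallelism of $J$: since $\nabla J=0$,
\[\nabla_X(J\nabla f)=J\nabla_X\nabla f=JHX.\]
Hence $g(\nabla_X(J\nabla f),Y)=g(JHX,Y)$, and to see that $J\nabla f$ is Killing it suffices to check this expression is skew-symmetric in $X,Y$. Using skew-adjointness of $J$, then symmetry of $H$, then $HJ=JH$,
\[g(JHY,X)=-g(HY,JX)=-g(Y,HJX)=-g(Y,JHX)=-g(JHX,Y),\]
so that $g(\nabla_X(J\nabla f),Y)+g(\nabla_Y(J\nabla f),X)=0$, which is exactly $\lie_{J\nabla f}g=0$.

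For (ii), I would use that on a K\"ahler manifold the Lie derivative of $J$ along any vector field admits a covariant expression: from $\nabla J=0$ one gets, for every $Y$,
\[(\lie_{\nabla f}J)(Y)=[\nabla f,JY]-J[\nabla f,Y]=J\nabla_Y\nabla f-\nabla_{JY}\nabla f=JHY-H(JY).\]
This vanishes precisely because $HJ=JH$, giving $\lie_{\nabla f}J=0$, i.e.\ $\nabla f$ is an infinitesimal automorphism of $J$.

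The computation is short, so the only genuine obstacle is conceptual and bookkeeping rather than analytic: correctly converting the $J$-invariance of $\He f$ into the commutation $HJ=JH$ (this is where all the geometric input is concentrated), and keeping the sign conventions straight in the skew-adjointness of $J$ and in the covariant formula for $\lie_X J$. Once $HJ=JH$ and $\nabla J=0$ are in hand, both conclusions drop out as linear-algebra identities; indeed (i) and (ii) are really two faces of the same commutation relation.
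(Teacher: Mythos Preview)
Your proof is correct and follows essentially the same route as the paper's: both use $\nabla J=0$ to pass $J$ through covariant derivatives and reduce each assertion to the identity $\He f(JY,Z)+\He f(Y,JZ)=0$, which is precisely your commutation $HJ=JH$. The only difference is packaging---you encode the hypothesis once as $HJ=JH$ and read both conclusions off this operator identity, whereas the paper performs the equivalent computation inline in each part.
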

%\begin{proof}
%	We compute, since $J$ is parallel, for any vector fields $Y$ and $Z$,
%	\begin{align*}
%		g(\nabla_Y (J(\nabla f)), Z)+ g(\nabla_Z (J(\nabla f)), Y) &= g(J\nabla_Y (\nabla f), Z)+ g(J \nabla_Z (\nabla f), Y),\\
%		&= -g(\nabla_Y (\nabla f), JZ)- g(\nabla_Z (\nabla f), JY),\\
%		&= -(\text{Hess f})(Y, JZ)-(\text{Hess f})(Z, JY)=0.
%	\end{align*}
%	The last equality follows because $\He f$ is $J$-invariant. For the second statement, we consider
%	\begin{align*}
%		(\lie_{\nabla f}J)Y &= [\nabla f, JY]-J([\nabla f, Y])\\
%		g((\lie_{\nabla f}J)Y, Z)&=g([\nabla f, JY], Z)-g(J([\nabla f, Y]), Z)\\
%		&= g(J\nabla_{\nabla f} Y, Z)-\He f(JY, Z)+g(\nabla_{\nabla f}Y-\nabla_Y\nabla f, JZ)\\
%		&=-\He f(JY, Z)-\He f(Y, JZ)=0.
%	\end{align*}	
%\end{proof}
\subsection{Foliation} 
\label{foliation}

In this subsection, we recall the concept of a foliation and related properties. The references are \cite{tonFol97, BGbookSasakian08}. A $p$-dimensional foliation $\mathcal{F}$ of an $n$-dimensional manifold $M$ refers to the partition of $M$ into a union of disjoint $p$-dimensional immersed submanifolds $\{L_\alpha\}_{\alpha\in A}$, called leaves, with the following property. Every point is on a chart $U$ with coordinates $(x_1,... x_p;~ y_1,...y_{q}),~ p+q=n,$ such that, for each $L_\alpha$, a connected component of $U\cap L_\alpha$ is described by the equations \[y_1=\text{constant}, ..., y_q=\text{constant}.\]
Consequently, it is called a foliated coordinate chart and let $E$ denote the sub-bundle of $TM$ tangential to $\mathcal{F}$. A vector field $X$ is said to be foliate with respect to foliation $\mathcal{F}$ if for every section $Y\in E$, $\lie_X Y$ is also tangential.

A Riemannian metric $g$ induces an orthogonal decomposition $TM= E\oplus E^\perp$. A section of $E^\perp$ is said to be horizontal. 
\begin{definition}
	\label{bundlelike}
	A Riemannian metric is said to be bundle-like with respect to a foliation $\mathcal{F}$ if for any foliate horizontal vector fields $X, Y$ and a tangential $V$, $V g(X, Y)=0.$ In that case, the foliation is said to be Riemannian. 
\end{definition}
\begin{remark}
	Locally a foliation looks like a submersion and a Riemannian foliation corresponds to a Riemannian submersion. 
\end{remark}

The following will be important to our investigation.
\begin{proposition}\cite[Propositions 2.6.7 and 2.6.9]{BGbookSasakian08} \label{1dimKilling}
	A $1$-dimensional foliation induced by a Killing vector field is Riemannian. Also, a $1$-dimensional Riemannian foliation whose orbits are geodesics is isometric; that is, the flow generated by the associated vector field is an isometry.  
\end{proposition}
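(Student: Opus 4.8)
The plan is to verify both assertions directly from the definitions in Subsection \ref{foliation}: the first is immediate from the Killing equation, while the second combines the geodesic hypothesis with the bundle-like property to upgrade a Riemannian flow to an isometric one. Throughout I work with a one-dimensional foliation $\mathcal{F}$, so that the tangential (vertical) distribution is a line and "tangential'' simply means "proportional to the generator.'' The only structural facts I use are that the Lie derivative of the metric is a symmetric $(0,2)$-tensor, and that for a bundle-like metric the foliate horizontal fields span the horizontal distribution pointwise, so it suffices to evaluate tensors on such extensions.

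For the first assertion, let $W$ be the Killing field, so the leaves of $\mathcal{F}$ are its integral curves and $W$ spans the tangential distribution where $W\neq\vec 0$. By Definition \ref{bundlelike} I must show $Vg(X,Y)=0$ for all foliate horizontal $X,Y$ and all tangential $V$. Since any tangential field has the form $V=hW$, it is enough to prove $Wg(X,Y)=0$. As $W$ is Killing, $\lie_W g=0$, which rearranges to
\[ Wg(X,Y)=g([W,X],Y)+g(X,[W,Y]). \]
Because $X$ is foliate, $[W,X]$ is tangential, hence proportional to $W$; as $Y$ is horizontal, $g([W,X],Y)=0$, and symmetrically $g(X,[W,Y])=0$. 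Thus $Wg(X,Y)=0$ and the foliation is Riemannian.

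For the second assertion I first normalize: since the leaves are geodesics, choose locally (using transverse orientability, which is no loss as the conclusion is local) the unit field $U$ tangent to the leaves, so that the geodesic hypothesis reads $\nabla_U U=0$ (equivalently, the leaves are totally geodesic, i.e. O'Neill's $T$-tensor vanishes). The goal is that the flow be isometric, meaning the leaves are orbits of a local one-parameter group of isometries, which amounts to $U$ being Killing, $\lie_U g=0$. As $\lie_U g$ is tensorial, I evaluate it on the three pair types, extending horizontal vectors to foliate horizontal fields $X,Y$. On $(U,U)$, $(\lie_U g)(U,U)=2g(\nabla_U U,U)=0$ by the geodesic equation. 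On $(U,X)$, $(\lie_U g)(U,X)=g(\nabla_U U,X)+g(\nabla_X U,U)=0+\tfrac12 X|U|^2=0$, using $\nabla_U U=0$ and $|U|\equiv 1$. Finally, on horizontal $X,Y$,
\[ (\lie_U g)(X,Y)=Ug(X,Y)-g([U,X],Y)-g(X,[U,Y]); \]
here $Ug(X,Y)=0$ is exactly the bundle-like hypothesis, while $[U,X]$ and $[U,Y]$ are tangential, hence proportional to $U$ and orthogonal to the horizontal $X,Y$, because $X,Y$ are foliate. Both bracket terms vanish, so $(\lie_U g)(X,Y)=0$, giving $\lie_U g=0$ and the isometric conclusion.

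The bracket bookkeeping is routine; the single point that genuinely requires care is the interface between the two hypotheses in the horizontal--horizontal case. The geodesic condition alone controls only the components of $\lie_U g$ involving $U$, whereas it is the separate bundle-like property that annihilates the purely horizontal component $Ug(X,Y)$. I would therefore be careful to keep the two inputs distinct, to justify the normalization to a \emph{unit foliate} generator, and to record that "isometric'' is meant in the flow sense, namely that $U$ generates a local flow by isometries.
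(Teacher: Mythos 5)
The paper offers no proof of this proposition at all---it is quoted directly from \cite[Prop.~2.6.7 and 2.6.9]{BGbookSasakian08}---so there is no in-paper argument to compare against; your proof is correct and is essentially the standard argument underlying the cited propositions. Both halves check: the Killing identity plus foliateness of $X,Y$ annihilates the bracket terms and gives the bundle-like condition of Definition~\ref{bundlelike}, while the three-case evaluation of $\lie_U g$ correctly keeps the two hypotheses separate (the geodesic equation $\nabla_U U=0$ controls the components involving $U$, and the bundle-like property alone kills $Ug(X,Y)$); the spanning fact you invoke is indeed valid for any foliation, since the horizontal part of a foliate field is again foliate (for vertical $V$, $[V,X^h]=[V,X]-[V,X^v]$ is vertical). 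The only slip is terminological: the local normalization to a unit generator $U$ uses orientability of the tangent line field of the foliation (longitudinal, not ``transverse'' orientability), which holds locally, so the argument is unaffected.
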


%\begin{proposition}\cite[Prop. 2.6.8]{BGbookSasakian08} \label{geodesicfolitation}
%	Let $\mathcal{F}$ be a foliation generated by a no-where vanishing $\zeta$. The followings are equivalent:
%	\begin{enumerate}[label=(\roman*)]
%		\item There exists a Riemannian metric $g$ such that $\zeta$ has unit length and the orbits of $\zeta$ are geodesics.
%		\item There exists a $1$-form $\eta$ such that $\eta(\zeta)=0$ and $\lie_{\zeta}\eta=0$.
%		\item There exists a $1$-form $\eta$ such that $\eta(\zeta)=1$ and $d\eta(\zeta, \cdot)=0$. 
%	\end{enumerate} 
%\end{proposition}

For a Riemannian foliation, the toolkit originally developed by B. O'Neill to study submersion  \cite{ONeill66} will play a crucial role. We let $\pi$ and $\pi^\perp$ be the projections from $TM$ onto $E$ and $E^\perp$ accordingly. Let $\nabla^\perp$ denote the following induced connection, for $Y$ a smooth section of $E^\perp$:
\begin{equation*}
	\nabla^\perp_X Y=\begin{cases}
		\pi^\perp (\nabla_X Y) \text{ if $X$ is a smooth section of $E^\perp$}\\
		\pi^\perp [X, Y] \text{ if $X$ is a smooth section of $E$}.
	\end{cases}
\end{equation*}
It is verified that $\nabla^\perp$ is the unique Levi-Civita connection with respect to the transverse metric $g^\perp:= g_{\mid_{E^\perp}}$ \cite[Exercise 2.4]{BGbookSasakian08}. Furthermore, we make the simplifying assumption that each leaf of $\mathcal{F}$ is totally geodesic. Let $U$ be a smooth section of $E$ and $X, Y$ be ones of $E^\perp$. In this setup, the connection is encoded by tensor $A$ as follows:
\begin{align*}
	A_X U &:=\pi^\perp (\nabla_X U),\\
	A_X Y &:= \pi (\nabla_X Y)=-A_Y X=\frac{1}{2}\pi([X, Y]).
\end{align*}
%We collect useful identities:
%\begin{align*}
%	g(A_X U, Y) &= -g(U, A_X Y),\\
%	\nabla_U X&= \pi^\perp (\nabla_U X),\\
%	\nabla_X U &= \pi (\nabla_X U)+A_X U,\\
%	\nabla_X Y &= A_X Y+\nabla^\perp_X Y. 	
%\end{align*}

\subsection{Almost Contact and Contact Structures}
In this subsection, we give a brief introduction to almost contact geometry. The reference is \cite{BGbookSasakian08}. 

\begin{definition} 
	An odd dimensional manifold $P$ is called almost contact if there exists a triple $(\zeta, \eta, \Phi)$ where $\zeta$ is a vector field, $\eta$ is a $1$-form, $\Phi$ is a tensor field of type $(1, 1)$, and they satisfy, 	everywhere on $M$,  
	\[\eta(\zeta)=1 \text{  and } \Phi^2=-\id+\zeta\otimes \eta.\]
\end{definition}
\noindent It is immediate from the definition that $\zeta$ is nowhere vanishing and, by Frobenius theorem, $\zeta$ generates an $1$-dimensional foliation $\mathcal{F}$ ($\zeta$ is also called the Reeb vector field \cite{reeb52}). Additionally, $\Phi$ is non-degenerate on the transverse subspace and 
% since an $1$-dim sub-bundle of the tangent bundle is always involutive. 
\[\Phi(\zeta)=0, ~~~ \eta \circ \Phi =0.\]
%Additionally, by Frobenius theorem, $\zeta$ generates an $1$-dimensional foliation $\mathcal{F}$ since an $1$-dim sub-bundle of the tangent bundle is always involutive. 
In the presence of a Riemannian metric $g$, $(P, \zeta, \eta, \Phi)$ is called an almost contact metric structure if $g$ is compatible with $\Phi$. That is,   
	\[g(\Phi(X), \Phi(Y))=g(X, Y)-\eta(X)\eta(Y). \]
It is naturally of great interest to consider the transverse geometry $g^\perp$, which is the restriction of $g$ to the subspace transverse to $\zeta$: 
\[g=g^\perp +\eta\otimes \eta. \]	

%Immediately, $\zeta$ has unit length 
%as $0=g(\zeta, \zeta)-\eta(\zeta)\eta(\zeta)=|\zeta|^2-1.$

In our analysis, it is important to determine when leaves of $\mathcal{F}$ are geodesics. 
\begin{lemma} \label{almostgeodesic}
	 Let $(P, \zeta, \eta, \Phi)$ be an almost contact metric structure. Then the following are equivalent:
	 \begin{enumerate}[label=(\roman*)]
	 	\item Integral curves of $\zeta$ are geodesics.
	 	\item $\eta$ is invariant along the flow of $\zeta$.
	 	\item $(\nabla_\zeta \Phi)\zeta=0$.
	 	\item $d\eta(\zeta, \cdot)=0$.
	 \end{enumerate} 
\end{lemma}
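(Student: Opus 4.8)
The plan is to reduce all four conditions to the single identity $\nabla_\zeta\zeta=0$, which is condition (i). Two structural facts recorded in the excerpt drive everything: $\eta=g(\zeta,\cdot)$ (obtained there from compatibility of $\Phi$ with $g$) and $|\zeta|=1$. Differentiating the latter gives $g(\nabla_X\zeta,\zeta)=0$ for every $X$; in particular $\nabla_\zeta\zeta\perp\zeta$. I will prove (i)$\Leftrightarrow$(ii), (ii)$\Leftrightarrow$(iv), and (i)$\Leftrightarrow$(iii), which together yield the full chain of equivalences.

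For (i)$\Leftrightarrow$(ii) I compute $\lie_\zeta\eta$ directly. Writing $\lie_\zeta\eta(X)=\zeta(\eta(X))-\eta([\zeta,X])$, substituting $\eta=g(\zeta,\cdot)$, and expanding with the Levi-Civita connection, the two terms carrying $\nabla_\zeta X$ cancel and one is left with
\[\lie_\zeta\eta(X)=g(\nabla_\zeta\zeta,X)+g(\zeta,\nabla_X\zeta).\]
The final term vanishes since $|\zeta|=1$, so $\lie_\zeta\eta(X)=g(\nabla_\zeta\zeta,X)$ for every $X$. Thus $\lie_\zeta\eta=0$ holds exactly when $\nabla_\zeta\zeta=0$, giving (i)$\Leftrightarrow$(ii).

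For (ii)$\Leftrightarrow$(iv) I invoke Cartan's formula $\lie_\zeta\eta=\iota_\zeta\,d\eta+d(\iota_\zeta\eta)$. Because $\iota_\zeta\eta=\eta(\zeta)=1$ is constant, the exact term drops, leaving $\lie_\zeta\eta=\iota_\zeta\,d\eta=d\eta(\zeta,\cdot)$; hence (ii) and (iv) are literally the same statement. For (i)$\Leftrightarrow$(iii) I use $\Phi\zeta=0$ to obtain $(\nabla_\zeta\Phi)\zeta=\nabla_\zeta(\Phi\zeta)-\Phi(\nabla_\zeta\zeta)=-\Phi(\nabla_\zeta\zeta)$, so the forward implication is immediate. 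For the converse I need that $\Phi$ is injective off $\zeta$: if $\Phi W=0$ then $\Phi^2W=0$, and $\Phi^2=-\id+\zeta\otimes\eta$ forces $W=\eta(W)\zeta$, so $\ker\Phi=\mathbb{R}\zeta$. Combining $\Phi(\nabla_\zeta\zeta)=0$ with $\nabla_\zeta\zeta\perp\zeta$ then gives $\nabla_\zeta\zeta=0$, completing (i)$\Leftrightarrow$(iii).

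The computations are all elementary, so there is no serious obstacle; the only step that is not purely formal is the converse in (i)$\Leftrightarrow$(iii), where one must use the kernel characterization $\ker\Phi=\mathbb{R}\zeta$ together with $\nabla_\zeta\zeta\perp\zeta$ to upgrade $\Phi(\nabla_\zeta\zeta)=0$ to $\nabla_\zeta\zeta=0$. Everywhere else the identities $\eta=g(\zeta,\cdot)$, $|\zeta|=1$, and $\eta(\zeta)=1$ do the work, and the only care required is tracking the cancellations in the Lie-derivative expansion.
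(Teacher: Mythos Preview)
Your proof is correct and follows essentially the same approach as the paper: both reduce each condition to $\nabla_\zeta\zeta=0$ via the identities $\lie_\zeta\eta(X)=g(\nabla_\zeta\zeta,X)$ and $(\nabla_\zeta\Phi)\zeta=-\Phi(\nabla_\zeta\zeta)$, using $|\zeta|=1$ and the injectivity of $\Phi$ on $\zeta^\perp$. The only cosmetic difference is that you link (iv) to (ii) via Cartan's formula, whereas the paper links (iv) directly to (i) by expanding $d\eta(\zeta,X)$ with the Levi-Civita connection; the content is the same.
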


\begin{proof} It follows from standard calculation; see \cite{BGbookSasakian08}.
%	As observed earlier, $\zeta$ has unit length. 
	
%	\textit{Claim:} $(i)\leftrightarrow (ii)$.
	
%	\textit{Proof.}
%	It is sufficient to observe, for any horizontal vector $X$, 
%	\begin{align*}
%		g(\nabla_\zeta \zeta, X) &= (\lie_{\zeta}\eta)(X)-\frac{1}{2}X|\zeta|^2. 
%	\end{align*}
%Therefore, integral curves of $\zeta$ are geodesics if and only if $(\lie_{\zeta}\eta)(X)=0$.\\

%\textit{Claim:} $(i)\leftrightarrow (iii)$.

%\textit{Proof.} We compute
%\begin{align*}
%	(\nabla_\zeta\Phi)(\zeta) &= \nabla_\zeta(\Phi(\zeta))-\Phi(\nabla_\zeta \zeta)=-\Phi(\nabla_\zeta \zeta).
%	%	0=g((\nabla_\zeta\Phi)X,\zeta) &= g\big(\nabla_\zeta(\Phi(X))-\Phi(\nabla_\zeta X), \zeta). 
%\end{align*}
%Since $\Phi$ is non-degenerate on the sub-bundle perpendicular to $\zeta$, the conclusion follows.  

%\textit{Claim:} $(i)\leftrightarrow (iv)$.

%\textit{Proof.} We compute
%\begin{align*}
%	(d\eta)(\zeta, X) &= (\nabla_\zeta \eta)(X)-(\nabla_X \eta)(\zeta),\\
%	&=\zeta g(\zeta, X)-g(\zeta, \nabla_\zeta X)-X g(\zeta, \zeta)+g(\zeta, \nabla_X \zeta),\\
%	&=g(\nabla_{\zeta}\zeta, X)-\frac{1}{2}X|\zeta|^2.
	%	0=g((\nabla_\zeta\Phi)X,\zeta) &= g\big(\nabla_\zeta(\Phi(X))-\Phi(\nabla_\zeta X), \zeta). 
%\end{align*}
\end{proof}

Furthermore, the tensor $A$ can be computed immediately \cite[Prop 2.5.14]{BGbookSasakian08}.

\begin{lemma} \label{computeA} Let $(P, \zeta, \eta, \Phi)$ be an almost contact metric structure such that $g$ is bundle-like with respect to the foliation generated by $\zeta$. Then, for horizontal vector fields $X$ and $Y$,
	\begin{align*}
		%2d\eta(X, Y) &= -g([{X}, {Y}], \zeta),\\
		2g(A_X Y, \zeta) &=-2d\eta(X, Y)=g([{X}, {Y}], \zeta), \\
		g(A_X \zeta, Y) &= d\eta(X, Y).
	\end{align*}
\end{lemma}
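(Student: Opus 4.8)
The plan is to read both identities straight off the O'Neill formalism recalled in Subsection~\ref{foliation}, exploiting two structural features of the present $1$-dimensional foliation: $\zeta$ is a \emph{unit} field spanning the vertical bundle $E$, so the vertical projection acts as $\pi(W)=g(W,\zeta)\zeta$, and $\eta=g(\cdot,\zeta)$ is the metric dual of $\zeta$, hence annihilates every horizontal field. Everything reduces to pairing the relevant covariant derivatives against $\zeta$.

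For the first line I would start from the listed identity $A_XY=\tfrac12\pi([X,Y])$, valid for horizontal $X,Y$. Here the bundle-like hypothesis is precisely what is doing the work: without it $\pi(\nabla_XY)$ has a nonzero vertical symmetric part proportional to $(\lie_\zeta g)(X,Y)\zeta$, and it is the Riemannian (bundle-like) condition that kills it and makes $A$ skew, so that $A_XY=\tfrac12\pi([X,Y])$ holds. Pairing with $\zeta$ and using $\pi(W)=g(W,\zeta)\zeta$ gives $g(A_XY,\zeta)=\tfrac12 g([X,Y],\zeta)$, that is $2g(A_XY,\zeta)=g([X,Y],\zeta)$. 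To introduce $d\eta$ I would evaluate the exterior derivative on $X,Y$: since $\eta(X)=\eta(Y)=0$ the two differentiated terms drop out, leaving $d\eta(X,Y)=-\tfrac12\eta([X,Y])=-\tfrac12 g([X,Y],\zeta)$. Substituting then yields $2g(A_XY,\zeta)=-2d\eta(X,Y)=g([X,Y],\zeta)$, the whole first line.

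For the second line I would avoid recomputing and instead invoke the skew-symmetry $g(A_XU,Y)=-g(U,A_XY)$ from the collected O'Neill identities (which itself follows immediately from $0=Xg(\zeta,Y)=g(\nabla_X\zeta,Y)+g(\zeta,\nabla_XY)$). Taking $U=\zeta$ gives $g(A_X\zeta,Y)=-g(A_XY,\zeta)$, and by the first line $g(A_XY,\zeta)=-d\eta(X,Y)$, so $g(A_X\zeta,Y)=d\eta(X,Y)$ as claimed.

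The computation is short, so there is no deep obstacle; the one place to be vigilant is the normalization of $d\eta$. The coefficient $-2$ in the first identity and the clean $d\eta$ in the second are consistent only under the convention $d\eta(X,Y)=\tfrac12\big(X\eta(Y)-Y\eta(X)-\eta([X,Y])\big)$, the normalization tied to the contact relation $d\eta(\cdot,\cdot)=a\,g(\cdot,\Phi\cdot)$ used throughout; with the unnormalized exterior derivative the stated factor of $2$ would not appear. I would therefore fix this convention explicitly before carrying out the two pairings above.
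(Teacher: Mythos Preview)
Your proposal is correct and is essentially the same argument as the paper's, just traversed in the opposite order: the paper starts from the covariant formula $2d\eta(X,Y)=(\nabla_X\eta)(Y)-(\nabla_Y\eta)(X)$ and reduces it to $-g([X,Y],\zeta)=-2g(A_XY,\zeta)$, invoking the bundle-like hypothesis at that last step, whereas you start from $A_XY=\tfrac12\pi([X,Y])$ and then bring in $d\eta$; for the second line both you and the paper use $g(A_X\zeta,Y)=-g(A_XY,\zeta)$. Your remark about the normalization of $d\eta$ is apt and matches the paper's stated convention.
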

%\begin{proof} By our convention of the exterior derivative, 
%	\begin{align*}
%		2 d\eta(X, Y) &=(\nabla_X \eta)(Y))-(\nabla_Y \eta)X\\
%		&= \nabla_X (\eta(Y))-\eta(\nabla_X Y)-\nabla_Y (\eta (X))+\eta(\nabla_Y X)\\
%		&=\big(\nabla_X g(\zeta, Y)-g(\zeta, [X,Y])-\nabla_Y g(\zeta, X)\big)\\
%		&=-g([{X}, {Y}], \zeta)=-2 g(A_X Y, \zeta).
%	\end{align*}
%The bundle-like assumption is used for the last equality. The first statement then follows. Similarly, 
%	\begin{align*}
%		g(A_X \zeta, Y) &= -g(\nabla_X Y, \zeta)= -g(A_X Y, \zeta)=d\eta(X, Y).
%	\end{align*}
%\end{proof}

The $\Phi$-sectional curvature of an almost contact manifold $(P, \zeta, \eta, \Phi)$ is defined on the horizontal sub-bundle (perpendicular to $\zeta$), for unit length $X$,
\[K_\Phi(X)=K(X, \Phi(X)).\]

%Next, we discuss the notion of a contact structure. 
%\begin{definition}
%	A $(2n+1)$-dimensional manifold $M$ is a contact manifold if there exists a $1$-form $\eta$, called a contact $1$-form, on $M$ such that $\eta \wedge (d\eta)^n \neq 0$
%	everywhere. %A contact structure is an equivalence class of such $1$-forms. 
%\end{definition}
%On a contact manifold, there is a unique vector field $\zeta$, called the Reeb vector field \cite{reeb52}, such that $\eta(\zeta)=1 \text{  and  } d\eta(\zeta, \cdot)=0.$
%Furthermore, $\eta$ gives the contact bundle, the kernel of $\eta$, the setup of a symplectic vector bundle via $d\eta$. An almost complex structure $J$ in such bundle is said to be compatible with the symplectic form $d\eta$ if, for all vector fields $X$ and $Y$, 
%\[ d\eta(X, Y)=d\eta(JX, JY).\]
%$J$ can then be extended trivially to $\Phi$ acting on the whole tangent space. 
%It is straightforward to check that $(\zeta, \eta, \Phi)$ defines an almost contact structure.  

\begin{definition}
	An almost contact metric structure $(P, \zeta, \eta, \Phi, g)$ is called contact if one further assumes $g(X, \Phi(Y))=d\eta(X, Y).$
%	In that case, it is called a contact metric manifold. 
\end{definition}
%The convention here is to be consistent with our definition of the K\"{a}hler form. It is immediate to check that a contact metric structure is indeed a contact manifold by the above definition. 
Additionally, a contact metric manifold $(P,\zeta, \eta, \Phi, g)$ is called K-contact if $\zeta$ is Killing; that is, $\lie_{\zeta}g=0.$ We are interested in certain $K$-contact structures which give a concrete bridge from almost contact (contact) to almost complex (complex).

\begin{definition}
	A Sasakian structure is a contact metric structure such that the cone $(P\times \mathbb{R}^+, r^2 g+ dr^2)$ is K\"{a}hler. Moreover, the K\"{a}hler form is given by $d(r^2\eta)$. 
\end{definition}
%\begin{remark}
%	Equivalently, a Sasakian structure is a $K$-contact structure whose almost $\text{CR}$-structure is integrable. 
%\end{remark}

Next we recall a transformation defined in \cite{tanno69}.

%\begin{definition}
%	\label{Sasahomothety}
%	Let $(M, \zeta,\eta, \Phi, g)$ be an almost contact metric structure. For $a>0$, a transverse $a$-homothety deformation is given by 
%	\[ \hat{\zeta} =\frac{1}{a}\zeta, ~~~ \hat{\eta} =a\eta, ~~~ \hat{\Phi}=\Phi,~~~ \hat{g}= ag+(a^2-a)\eta\otimes \eta. \]
%$\end{definition}
% If $(M, \zeta,\eta, \Phi, g)$ is Sasakian, then so is its homothety transformation.
%$(M, \zeta,\eta, \Phi, g)$ be an almost contact metric structure. 
\begin{definition}
	\label{Sasadeform}
	For $H, F\in \mathbb{R}^+$, a $(\pm, H, F)$-deformation is given by 
	\[ \zeta^\ast =\frac{\zeta}{H}, ~~~ \eta^\ast = H\eta, ~~~ \Phi^\ast=\pm\Phi,~~~ g^\ast=  F^2 g +(H^2-F^2)\eta\otimes \eta. \]
	A deformed contact/Sasakian metric structure is obtained via an $(\pm, H, F)$-deformation of a contact/Sasakian metric structure.  
\end{definition}
\begin{remark}
	\label{scalingdiff} %A $\pm (H, F)$-deformation of a contact metric is not necessarily contact. %If $d\eta (X, Y)= g(X, \Phi Y)$ then,  
	%\[d\eta^{\ast}(X, Y)= Hd\eta (X, Y) = H g(X, \Phi Y)= \pm \frac{H}{F^2} g^\ast (X, \Phi^\ast Y). \]
	A $(+, H, F)$ transformation of a contact metric structure is contact iff $H= F^2$; this special case is called the \textit{transverse} homothety deformation \cite[Page 228]{BGbookSasakian08}.
	
	%That motivates the following definition.
\end{remark}
It is of interest to relate the curvature of a $(\pm, H, F)$ deformation with one of the original metric. First, there is a simple observation.

\begin{lemma} \label{basicFH}
	Let $(P, g^\ast, \zeta^\ast, \eta^\ast, \Phi^\ast)$ be an $(\pm, H, F)$ deformation of an almost contact metric structure. Then we have the following:
	\begin{enumerate}[label=(\roman*)]
		\item $(P, g^\ast, \zeta^\ast, \eta^\ast, \Phi^\ast)$ is an almost contact metric structure.
		\item  $g$ is bundle-like with respect to $\mathcal{F}$ if and only if $g^\ast$ is.
		\item $\lie_{\zeta} g=0 \iff \lie_{\zeta^\ast} g^\ast=0$.  
	\end{enumerate}	
\end{lemma}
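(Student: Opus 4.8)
The plan is to exploit two structural features of a $\pm(H,F)$-deformation with $H,F$ constant: it preserves the horizontal distribution $\ker\eta$ and rescales the restriction of the metric there by the constant $F^2$, altering $g$ only in the $\zeta$-direction through the term $(H^2-F^2)\eta\otimes\eta$. First I would dispose of (i) by a direct check of the three almost-contact-metric axioms. The identities $\eta^\ast(\zeta^\ast)=H\eta(\zeta/H)=1$ and $(\Phi^\ast)^2=\Phi^2=-\id+\zeta\otimes\eta=-\id+\zeta^\ast\otimes\eta^\ast$ are immediate. For compatibility, since $\eta\circ\Phi=0$ the $\eta\otimes\eta$ correction drops out when both slots carry a $\Phi$, giving $g^\ast(\Phi^\ast X,\Phi^\ast Y)=F^2 g(\Phi X,\Phi Y)=F^2\big(g(X,Y)-\eta(X)\eta(Y)\big)$, which one checks equals $g^\ast(X,Y)-\eta^\ast(X)\eta^\ast(Y)$ after the two $H^2$ contributions cancel.

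For (ii) the decisive observation is that the horizontal distributions for $g$ and $g^\ast$ coincide: computing $g^\ast(X,\zeta^\ast)=\eta^\ast(X)=H\eta(X)$ shows that $X\perp_{g^\ast}\zeta^\ast$ exactly when $\eta(X)=0$, that is, when $X\perp_g\zeta$. Hence the notion of a foliate horizontal field is the same for both metrics, and on such fields $g^\ast(X,Y)=F^2 g(X,Y)$ since the $\eta\otimes\eta$ term vanishes. A tangential $V$ is a multiple of $\zeta$ and $F$ is constant, so $V g^\ast(X,Y)=F^2\, V g(X,Y)$; one side vanishes precisely when the other does, which is the bundle-like equivalence of Definition \ref{bundlelike}.

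Part (iii) is where the genuine work lies. Because $H$ is constant, $\lie_{\zeta^\ast}=H^{-1}\lie_\zeta$ on tensors, reducing the claim to $\lie_\zeta g=0\Leftrightarrow\lie_\zeta g^\ast=0$. Expanding $\lie_\zeta g^\ast=F^2\lie_\zeta g+(H^2-F^2)\big((\lie_\zeta\eta)\otimes\eta+\eta\otimes(\lie_\zeta\eta)\big)$, the forward direction requires the cross term to disappear. The key input is that in any almost contact metric structure $\zeta$ Killing forces $\lie_\zeta\eta=0$: the axioms give $\eta=\iota_\zeta g$ (set $Y=\zeta$ in the compatibility relation and use $\Phi\zeta=0$, $\eta(\zeta)=1$), whence $\lie_\zeta\eta=\iota_\zeta(\lie_\zeta g)=0$. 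The main obstacle is the converse, since $\lie_\zeta g$ cannot be read off directly from $\lie_\zeta g^\ast=0$. Here I would apply the same fact to the starred structure, which is almost contact metric by (i): $\lie_{\zeta^\ast}g^\ast=0$ yields $\lie_{\zeta^\ast}\eta^\ast=0$, and as $\eta^\ast=H\eta$ with $H$ constant this is $\lie_\zeta\eta=0$. With the cross term eliminated, $0=\lie_\zeta g^\ast=F^2\lie_\zeta g$, and $F\neq 0$ gives $\lie_\zeta g=0$. Thus the self-duality of the deformation under the interchange of starred and unstarred data closes the equivalence.
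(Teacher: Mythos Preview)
Your proof is correct and follows the same direct-verification approach the paper takes; the paper merely writes ``straightforward verification'' and spells out only the compatibility computation for (i), leaving (ii) and (iii) entirely to the reader. Your treatment of (iii) via the duality observation---that $\lie_\zeta\eta=\iota_\zeta(\lie_\zeta g)$ in any almost contact metric structure, applied once forward and once to the starred structure---is a clean way to handle the converse that the paper does not make explicit.
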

\begin{proof}
	The proof is via straightforward verification. For example, %with any vector fields $X$ and $Y$,
	\begin{align*}
		g^\ast (\Phi^\ast X, \Phi^\ast Y) &= (F^2 g +(H^2-F^2)\eta\otimes \eta)(\Phi X, \Phi X)\\
		&= F^2 (g(X, Y)-\eta(X)\eta(Y))= g^\ast (X, Y)-\eta^\ast(X)\eta^\ast(Y). 
	%	&= (F^2 g +(H^2-F^2)\eta\otimes \eta)(X, Y)-H^2 \eta(X)\eta(Y),\\
		%&= g^\ast (X, Y)-\eta^\ast(X)\eta^\ast(Y). 
	\end{align*}
\end{proof}
\noindent For $g=g^\perp+ \eta\otimes \eta$,  
	\[{g^\ast}= H^2 \eta\otimes \eta+ F^2 g^\perp. \]
In case of a  K-contact structure, the curvature calculation is relatively simple.  
\begin{proposition}
	\label{Rcdeformed}
	Let $(P, \zeta,\eta, \Phi, g)$ be a K-contact structure and $(P, g^\ast, \zeta^\ast, \eta^\ast, \Phi^\ast)$ be its $(\pm, H, F)$ deformation. Then the curvatures are related by, for orthonormal horizontal vectors $X$ and $Y$
	\begin{align*}
		{K^\ast}(X, Y) &= \frac{1}{F^2}K^\perp(FX, FY)-3\frac{H^2}{F^4} g^\ast (X, \Phi^\ast Y)^2,\\
		{K^\ast}(X, \zeta^\ast) &= \frac{H^2}{F^4},\\
		{\Rc^\ast}(X, Y) &= \Rc^\perp (X, Y)-2\frac{H^2}{F^4}g^\ast(X, Y)\\\
		\Rc^\ast(\zeta^\ast, \zeta^\ast) &=\frac{H^2}{F^4} (\text{dim}(M)-1).
	\end{align*}
Here $K^\perp$ and $\Rc^\perp$ are the sectional and Ricci curvature of $(g^\perp, \nabla^\perp)$.  
\end{proposition}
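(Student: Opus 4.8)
The plan is to realize both $g$ and $g^*$ as total spaces of the \emph{same} one-dimensional Riemannian foliation $\mathcal{F}$ generated by $\zeta$, and to apply O'Neill's curvature formulas \cite{ONeill66} in the totally geodesic regime. First I would check that the foliation is admissible for both metrics. Since the structure is K-contact, $\zeta$ is a unit Killing field, so $\nabla_\zeta\zeta=0$, the integral curves are geodesics, and by Proposition \ref{1dimKilling} the foliation is Riemannian (bundle-like) with totally geodesic fibers. Lemma \ref{basicFH}(ii)--(iii) then transports both the bundle-like and the Killing properties to the deformation, so $\zeta^*$ is again a unit Killing field for $g^*$ and $\mathcal{F}$ is totally geodesic for $g^*$ as well. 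Crucially, the two metrics share the same horizontal--vertical splitting $TM=\mathbb{R}\zeta\oplus\ker\eta$, because $g^*(X,\zeta)=H^2\eta(X)$ vanishes exactly on $\ker\eta$; hence the projections $\pi,\pi^\perp$, and therefore the vertical vector $A^*_XY=\tfrac12\pi[X,Y]$, are metric-independent.

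Next I would pin down the O'Neill tensor $A^*$ of $g^*$ and its $g^*$-lengths. Applying Lemma \ref{computeA} to the deformed structure, together with $d\eta^*=H\,d\eta$ and the contact identity $d\eta(X,Y)=g(X,\Phi Y)$, gives $g^*(A^*_XY,\zeta^*)=-H\,g(X,\Phi Y)$ and $g^*(A^*_X\zeta^*,Y)=H\,g(X,\Phi Y)$. Converting the right-hand sides into $g^*$ via $g^*(X,\Phi^* Y)=\pm F^2 g(X,\Phi Y)$ yields the clean norms $|A^*_XY|^2_{g^*}=\tfrac{H^2}{F^4}\,g^*(X,\Phi^* Y)^2$ and $|A^*_X\zeta^*|^2_{g^*}=\tfrac{H^2}{F^4}$ for $g^*$-unit horizontal $X$, using that $\Phi^*$ is a $g^*$-isometry of the horizontal bundle.

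Then I would feed these into O'Neill's equations for a submersion with totally geodesic fibers: the horizontal-plane formula $K^*(X,Y)=K^{*\perp}(X,Y)-3|A^*_XY|^2$ and the mixed-plane formula $K^*(X,\zeta^*)=|A^*_X\zeta^*|^2$. The transverse data are governed by the constant homothety $g^{*\perp}=F^2 g^\perp$, whose Levi-Civita connection coincides with $\nabla^\perp$; hence the transverse Ricci $(0,2)$-tensor is unchanged, $\Rc^{*\perp}=\Rc^\perp$, while the transverse sectional curvature scales as $K^{*\perp}(X,Y)=\tfrac1{F^2}K^\perp(FX,FY)$. This produces the first two displayed identities at once. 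For the Ricci identities I would trace over a $g^*$-orthonormal frame $\{\zeta^*,E_1,\dots\}$: summing the horizontal sectional curvatures gives $\Rc^\perp(X,X)-3\sum_i|A^*_XE_i|^2$ with $\sum_i|A^*_XE_i|^2=\tfrac{H^2}{F^4}$ (using $\sum_i g^*(\Phi X,E_i)^2=1$), the single mixed term contributes $+\tfrac{H^2}{F^4}$, and the two combine to $\Rc^\perp(X,X)-2\tfrac{H^2}{F^4}$; polarization gives the stated $(0,2)$-tensor identity, and tracing $K^*(\zeta^*,E_i)$ over the $\dim(M)-1$ horizontal directions gives the last.

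The routine part is the O'Neill bookkeeping. The main obstacle is keeping the two independent scale factors $H$ and $F$ consistent across all the norms --- in particular distinguishing the rescaled transverse metric $F^2 g^\perp$ (which scales sectional curvature but leaves the connection and the Ricci $(0,2)$-tensor fixed) from the vertical factor $H^2$ entering $A^*$ --- and applying the horizontal and mixed curvature formulas with their correct coefficients $-3|A^*|^2$ and $+|A^*|^2$ respectively.
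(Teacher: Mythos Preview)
Your proposal is correct and follows essentially the same approach as the paper: both verify that the foliation generated by $\zeta$ is Riemannian with totally geodesic leaves for both $g$ and $g^\ast$ (via Proposition~\ref{1dimKilling}, Lemma~\ref{basicFH}, and the fact that a unit Killing field has geodesic integral curves), compute the O'Neill tensor $A^\ast$ through Lemma~\ref{computeA} and the relation $d\eta^\ast=H\,d\eta$, and then invoke O'Neill's curvature formulas for a totally geodesic Riemannian submersion. Your write-up is in fact somewhat more detailed than the paper's, which computes $A^\ast_X\zeta^\ast=-\tfrac{H}{F^2}\Phi X$ and then passes directly to the Ricci identities, citing \cite[Chapter~9]{besse} for the intermediate sectional-curvature steps that you spell out explicitly.
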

\begin{proof}
	Since $\zeta$ is a Killing vector field, its generated foliation $\mathcal{F}$ is Riemannian by Proposition \ref{1dimKilling}. By Lemma \ref{basicFH}, $(M, g^\ast, \zeta^\ast, \eta^\ast, \Phi^\ast)$ is an almost contact metric structure, $g^\ast$ is bundle-like with respect to $\mathcal{F}$, and  $\lie_{\zeta} g=0= \lie_{\zeta^\ast} g^\ast$. Therefore, by Lemma \ref{almostgeodesic}, orbits of $\mathcal{F}$ are geodesics with respect to either $g$ or $g^\ast$.  

	For a Riemannian totally geodesic foliation, the curvature can be computed via tensor $A$; see \cite[Chapter 9]{besse} or \cite[Theorem 2.5.16]{BGbookSasakian08}. In our case, the computation of $A$ is given by Lemma \ref{computeA}. 
%	\begin{align*}
%		%2{g^\ast}({A^\ast}_X Y, {\zeta^\ast})=-2d{\eta^\ast} (X, Y) &=-2 H d\eta(X, Y)=-2 g^\perp (X, \Phi Y)=\frac{\pm 2}{F^2}(\hat{g})^\perp(X, \Phi^\ast Y),\\
%		{g}^\ast ({A}^\ast_X {\zeta^\ast}, Y) &= d{\eta^\ast}(X, Y)=H d\eta(X, Y)= H g^\perp (X, \Phi Y)=\frac{-H}{F^2}({g}^\ast)^\perp(\Phi X, Y).
%	\end{align*}
%	Thus, 
%	\begin{align*}
%		A^\ast_X \zeta^\ast &= \frac{-H}{F^2}\Phi(X).
%	\end{align*}
%	Then,
%	\begin{align*}
%		{\Rc^\ast}(X, Y) &=({\Rc^\ast})^\perp (X, Y)-2{g^\ast}({A^\ast}_X, {A^\ast}_Y),\\
%		&=({\Rc^\ast})^\perp (X, Y)-2{g^\ast}({A^\ast}_X {\zeta^\ast},{A^\ast}_X {\zeta^\ast}),\\
%		&=\Rc^\perp (X, Y)-2\frac{H^2}{F^4}g^\ast(X, Y)
%	\end{align*}
%	Similarly,	
%	\begin{align*}
%		\Rc^\ast(\zeta^\ast, \zeta^\ast) &= g^\ast(A^\ast\zeta^\ast, A^\ast\zeta^\ast),\\
%		&=\sum_i g^\ast(A^\ast_{X_i}\zeta^\ast, A^\ast_{X_i}\zeta^\ast),\\
%		&=\frac{H^2}{F^4} (\text{dim}(M)-1).
%	\end{align*}
\end{proof}

\subsection{Rectifiable, Transnormal, and Isoparametric Functions}
\label{isoparasection}
In this subsection, for a complete connected Riemannian manifold $(M, g)$, we consider a smooth function $f: M^n \rightarrow \mathbb{R}$. First, we recall the definition of rectifiable which appears in \cite{PW09grsym, CM16gradient}. 
\begin{definition}
	$f$ is rectifiable if $|\nabla f|$ is constant along each connected component of level sets of $f$.
\end{definition}
%The following observation gives a geometric interpretation. 
A connected component of a regular level set is called a regular connected component. 
\begin{lemma} \label{equivalentconditions}
	The following are equivalent:
\begin{enumerate}[label=(\roman*)]
	\item f is rectifiable.
	\item Integral curves of $\nabla f$ are geodesics after reparametrization.
	\item The gradient of $f$ is an eigenvector of its Hessian.
\end{enumerate}
\end{lemma}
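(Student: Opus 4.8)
The plan is to do everything on the regular set $\Omega=\{p\in M:\nabla f(p)\neq\vec{0}\}$, where the level sets are smooth hypersurfaces and $\nabla f$ spans their normal line, and to close the cycle (i)$\Leftrightarrow$(iii)$\Leftrightarrow$(ii). The single computation driving the whole argument is the normal-derivative identity for the speed. Differentiating $|\nabla f|^2=g(\nabla f,\nabla f)$ along an arbitrary vector $X$ gives $X|\nabla f|^2=2g(\nabla_X\nabla f,\nabla f)=2\He f(X,\nabla f)$, and then I invoke the symmetry of the Hessian, $\He f(X,\nabla f)=\He f(\nabla f,X)=g(\nabla_{\nabla f}\nabla f,X)$, to obtain the key relation
\[ X|\nabla f|^2 = 2\,g(\nabla_{\nabla f}\nabla f,\;X).\]
Both implications then read off this one formula.

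For (i)$\Leftrightarrow$(iii): a vector $X$ tangent to a regular level set is orthogonal to $\nabla f$, so by the displayed identity $f$ is rectifiable exactly when $g(\nabla_{\nabla f}\nabla f,X)=0$ for every $X\perp\nabla f$ on each connected component. Decomposing $\nabla_{\nabla f}\nabla f=\mu\,\nabla f+W$ with $W\perp\nabla f$, this vanishing is equivalent to $W\equiv 0$, i.e. $\nabla_{\nabla f}\nabla f=\mu\,\nabla f$, which is precisely the assertion that $\nabla f$ is an eigenvector of the Hessian endomorphism $X\mapsto\nabla_X\nabla f$, with eigenvalue $\mu$. The only point needing care is that rectifiability is the global condition that $|\nabla f|$ be \emph{constant} on each connected component, whereas (iii) is pointwise; I would bridge this by noting that on a connected hypersurface the tangential derivative of $|\nabla f|^2$ vanishing everywhere is equivalent to $|\nabla f|$ being constant there.

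For (ii)$\Leftrightarrow$(iii): I would use the classical characterization that a regular curve $\gamma$ is a geodesic up to reparametrization (a pregeodesic) if and only if $\nabla_{\gamma'}\gamma'$ is pointwise proportional to $\gamma'$. Along an integral curve of $\nabla f$ one has $\gamma'=\nabla f$, hence $\nabla_{\gamma'}\gamma'=\nabla_{\nabla f}\nabla f$, and this is proportional to $\gamma'=\nabla f$ exactly when (iii) holds. To be self-contained I would exhibit the reparametrization: if $\nabla_{\gamma'}\gamma'=h\,\gamma'$ along $\gamma$, then a change of parameter $t\mapsto\sigma(t)$ with $\sigma'(t)=\exp\!\big(\int_0^t h\big)$ turns $\gamma$ into a curve of vanishing acceleration, which is valid throughout $\Omega$ since there $\nabla f\neq\vec{0}$ and the factor $h=\mu\circ\gamma$ is smooth.

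I do not expect a genuine analytic obstacle here; the entire content is the one symmetric-Hessian identity above. The only real subtlety is bookkeeping at the interface between the pointwise condition (iii) and the global constancy in (i), together with confining the discussion to the regular set $\Omega$, since on the critical set $\{\nabla f=\vec{0}\}$ both the notion of eigenvector and that of an integral curve degenerate. I would therefore state the equivalences as holding on $\Omega$ and remark that this is exactly where rectifiability, the pregeodesic property, and the eigenvector property are meaningful.
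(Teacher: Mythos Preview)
Your proposal is correct and follows essentially the same approach as the paper: both arguments reduce everything to the single identity $X|\nabla f|^2 = 2\He f(\nabla f,X) = 2g(\nabla_{\nabla f}\nabla f,X)$ and then read off (i) and (iii) as the vanishing of the component of $\nabla_{\nabla f}\nabla f$ orthogonal to $\nabla f$. The only cosmetic difference is that the paper handles (ii) by explicitly normalizing to the unit field $V=\nabla f/|\nabla f|$ and computing $\nabla_V V$, whereas you invoke the pregeodesic criterion $\nabla_{\gamma'}\gamma'\parallel\gamma'$ and build the reparametrization afterward; these are equivalent packagings of the same computation.
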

\begin{proof}
Integral curves of $\nabla f$ are reparametrized geodesics if and only if, when $\nabla f\neq 0$,
	\begin{align*}
		0 &=\frac{1}{|\nabla f|} \nabla_{\nabla f}\frac{\nabla f}{|\nabla f|}\\ &=\frac{1}{|\nabla f|^3}\big(|\nabla f| \nabla_{\nabla f}\nabla f-(\nabla_{\nabla f}|\nabla f|)\nabla f\big).
	\end{align*}
	Thus, it is equivalent to that, for any unit vector field $E_i\perp \nabla f$, 
	\[0= g(\nabla_{\nabla f}\nabla f, E_i)=\He f(\nabla f, E_i)= \frac{1}{2}\nabla_{E_i}|\nabla f|^2.\]
	Equivalently, $|\nabla f|^2$ is constant on each regular connected component. The equation above also is equivalent to that $\nabla f$ is an eigenvector of $\He f$.
\end{proof}
\begin{remark}
	Consequently, it justifies the terminology that a rectifiable function is also called one with a geodesic gradient \cite{derd21, dp20geodesicholomorphicgradient}. 
\end{remark}
A priori, $|\nabla f|$ might vary between different connected components. Thus, it is useful to have a global condition.  

\begin{definition}
	$f$ is called transnormal if there is a continuous function $b: \mathbb{R}\mapsto \mathbb{R}$ such that $|\nabla f|^2= b\circ f.$	Furthermore, a transnormal function is called \textit{isoparametric} if there is a continuous function $a:\mathbb{R}\mapsto \mathbb{R}$ such that $\Delta f= a\circ f.$ 
\end{definition}

%\begin{remark}
	Generally speaking, the former condition corresponds to equidistant level sets while the latter implies that each regular one has constant mean curvature.  
%\end{remark}
%\begin{remark}
%	Also, the level sets of a transnormal $f$ could be formulated as a singular Riemannian foliation. Thus,
Also, this subject is closely related to the theory of a transnormal system and polar foliations \cite{miyaoka13}. 	
%\end{remark}
It is immediate that if $f$ is rectifiable and each level set is connected then $f$ is transnormal. 
%\begin{lemma} $f$ is rectifiable and each level set is connected then $f$ is transnormal.
%\end{lemma}
%\begin{proof}
%	It follows immediately from the definitions and Sard's theorem about regular values being dense in the range of $f$.
%\end{proof}

%\begin{definition}
%	\label{fsegment}
%	A geodesic segment $\gamma:[\alpha, \beta]\mapsto M$ is called an $f$-segment if %$f(\gamma(t))$ is an increasing function of $t$ and
%	$\gamma'(t)=\frac{\nabla f}{|\nabla f|}$ whenever $|\nabla f|\neq 0$. 
%\end{definition}

\begin{remark}
	\label{fsegment}
%	\label{translatedgeodesic}
A geodesic segment $\gamma:[\alpha, \beta]\mapsto M$ is called an $f$-segment if %$f(\gamma(t))$ is an increasing function of $t$ and
$\gamma'(t)=\frac{\nabla f}{|\nabla f|}$ whenever $|\nabla f|\neq 0$. At a point p such that $\nabla f(p)\neq 0$, it is possible to reparametrize an $f$-segment $\gamma$ via translation such that $\gamma\circ f=\id$ in a neighborhood of $p$. If $f(p)$ is a local minimum or maximum, one can only reparametrize to obtain such a property in an one-sided neighborhood. Also, with this parametrization, $\frac{\partial f}{\partial t}=|\nabla f|$.  
\end{remark}

For $c\in f(M)$, denote $M_c=f^{-1}(c)$. 

\begin{lemma} 
	\label{isopara1} If $f$ is transnormal and $[\alpha, \beta]\subset f(M)$ contains no critical value of $f$ then for any $x\in M_\alpha$, $y\in M_\beta$, we have
	\begin{enumerate} [label=(\roman*)]
		\item $d(x, M_\beta)= d(M_\alpha, y)= \int_{\alpha}^\beta \frac{df}{\sqrt{|\nabla f|}}$;
		\item the integral curves of $\nabla f$ after reparametrization are $f$-segments;
		\item the $f$-segments are the shortest curves among all curves connecting $M_\alpha$ and $M_\beta$.
	\end{enumerate}
\end{lemma}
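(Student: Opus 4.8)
The plan is to treat the three assertions in the order (ii), then (i), then (iii), since each relies on the previous. Write $\Omega = f^{-1}([\alpha,\beta])$ and recall first that a transnormal $f$ is automatically rectifiable: by definition $|\nabla f|^2 = b(f)$ is constant on each level set, which is exactly the rectifiability condition. Because $[\alpha,\beta]$ contains no critical value, $|\nabla f|^2 = b(f) > 0$ on all of $\Omega$, so $V = \nabla f/|\nabla f|$ is a well-defined unit vector field there. For (ii), the characterization of rectifiable functions established above shows that the integral curves of $\nabla f$, once reparametrized to unit speed, are geodesics with unit tangent $V$; this is precisely the definition of an $f$-segment (Definition \ref{fsegment}). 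Since $|\nabla f|$ depends only on the value of $f$, Remark \ref{translatedgeodesic} lets me use $f$ itself, rather than arc length, as a parameter.

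For the length computation in (i), along a unit-speed $f$-segment $\gamma(s)$ one has $\frac{d}{ds}(f\circ\gamma) = g(\nabla f, V) = |\nabla f| = \sqrt{b(f)}$, which is positive on $\Omega$. Hence $f\circ\gamma$ is strictly increasing, $s$ is a function of $f$ alone, and the arc length of the portion of $\gamma$ joining $M_\alpha$ to $M_\beta$ equals $\int_\alpha^\beta \frac{df}{|\nabla f|}$ (equivalently $\int_\alpha^\beta \frac{df}{\sqrt{b(f)}}$), a quantity depending only on $\alpha$ and $\beta$. In particular this yields the upper bounds $d(x,M_\beta) \le \int_\alpha^\beta \frac{df}{|\nabla f|}$ and $d(M_\alpha,y) \le \int_\alpha^\beta \frac{df}{|\nabla f|}$.

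For the matching lower bound and for (iii), set $\psi(c) = \int_\alpha^c \frac{dt}{\sqrt{b(t)}}$ and $u = \psi\circ f$ on $\Omega$. Then $\nabla u = \psi'(f)\,\nabla f = \nabla f/|\nabla f|$, so $|\nabla u| \equiv 1$ and $u$ is $1$-Lipschitz on $\Omega$. For any piecewise-smooth curve $\sigma$ joining $x\in M_\alpha$ to $M_\beta$, a subarc of $\sigma$ stays in $\Omega$ while running from $M_\alpha$ to $M_\beta$ (apply the intermediate value theorem to $f\circ\sigma$ and truncate to the first such excursion); along it $|\sigma'| \ge g(\nabla u,\sigma') = \frac{d}{dt}(u\circ\sigma)$, so the length dominates the total variation of $u$, giving $L(\sigma) \ge \psi(\beta)-\psi(\alpha) = \int_\alpha^\beta \frac{df}{|\nabla f|}$. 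Combined with the upper bound this proves the equalities in (i), and since the bound is independent of the chosen point it shows the distance is realized. Equality in $|\sigma'|\ge g(\nabla u,\sigma')$ forces $\sigma'$ to be a nonnegative multiple of $\nabla f$ at every point, so a length-minimizing curve is, after reparametrization, an $f$-segment; this is (iii).

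The main obstacle is the lower-bound step: one must rule out that a shorter competitor gains by leaving the slab $\Omega$, which is exactly what the truncation to the first $M_\alpha$-to-$M_\beta$ excursion together with the global $1$-Lipschitz bound for $u$ resolves. The equality discussion identifying minimizers with $f$-segments then requires only the Cauchy--Schwarz equality case together with the fact, from (ii), that such curves are genuine geodesics; the remaining manipulations are routine consequences of transnormality.
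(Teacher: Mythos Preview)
Your argument is correct. The paper itself does not supply a proof of this lemma at all; it simply cites \cite[Lemma~1]{wang87iso}. What you have written is essentially the standard argument behind that reference: use rectifiability to see that unit-speed integral curves of $\nabla f$ are geodesics, compute their length via $df/ds=\sqrt{b(f)}$, and obtain the matching lower bound by introducing the eikonal function $u=\psi\circ f$ with $|\nabla u|\equiv 1$ and applying Cauchy--Schwarz to an arbitrary competitor (after truncating to a subarc that remains in the slab $f^{-1}([\alpha,\beta])$). Your truncation step is the right way to handle curves that might wander outside $\Omega$, and the equality case correctly identifies minimizers with $f$-segments.

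Two small points worth making explicit, though neither is a gap: first, the $f$-segment issued from $x\in M_\alpha$ actually reaches $M_\beta$ because $M$ is complete (so the geodesic is defined for all time) and $b$ is continuous and positive on the compact interval $[\alpha,\beta]$, giving a uniform lower bound on $df/ds$; second, the integrand in the statement should of course be read as $df/|\nabla f|=df/\sqrt{b(f)}$, as you have it.
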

\begin{proof}
	See \cite[Lemma 1]{wang87iso}.
\end{proof}

Obviously, there is a local version when $f$ is rectifiable. Thus, a Riemannian manifold with a rectifiable function $f$ is locally foliated by equidistant hypersurfaces and one can write the metric in specific way. Following \cite[Section 2]{EW00ivp}, let $P$ be a differentiable manifold corresponding to a regular connected component. There is a local diffeomorphism $\phi: I\times P\mapsto M$ such that the metric can be written as  $\phi^\ast g=dt^2+ g_t.$ Here $t$ is a parametrization of $f(M)$ with unit tangent vector (see Remark \ref{fsegment}) and $g_t$ is an one-parameter family of metrics $P$ such that, for $\phi_c: \{c\}\times P\mapsto M_c$ the restriction of $\phi$ to a slice, $\phi_c^\ast g_{\mid M_c}= g_c.$
%That is, $g_t$ is equal to the pullback of the induced metric on the corresponding nearby connected component of level sets. 

For a level set $M_c$, we denote the normal exponential map $\Pi_c: T^\perp M_c \mapsto M.$ At a regular value, $\Pi_c$ induces a diffeomorphism between nearby regular connected components of distance $\epsilon$: $\Pi_c^{\epsilon}: M_c \mapsto M_{c+\epsilon}$, via normal geodesics of length $\epsilon$ in the direction of $\partial_t$. 

\begin{lemma}
	\label{identification}
	Let $I$ be a continuous open interval with regular values of $f$. For $(c-\epsilon, c+\epsilon)\subset I$, then $\Pi_c^\epsilon$ is just the identification by the diffeomorphism $\phi$. 
\end{lemma}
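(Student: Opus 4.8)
The plan is to unwind both descriptions and observe that they encode the same normal geodesic flow. First I would recall that by the construction preceding the statement we have $\phi^\ast g = dt^2 + g_t$, a metric with no $dt\,dx$ cross terms and with $\phi_\ast(\partial_t)$ of unit length. A direct computation of the Christoffel symbols in the coordinates $(t, x^1,\dots,x^n)$ shows $\Gamma^k_{tt}=0$ for every $k$, so that for each fixed $p\in P$ the coordinate curve $s\mapsto \phi(s,p)$ is a unit-speed geodesic whose velocity $\phi_\ast(\partial_t)$ is everywhere orthogonal to the level hypersurfaces $M_s$. Equivalently, these curves are precisely the $f$-segments of Lemma \ref{isopara1}(ii), reparametrized by arclength so that $\phi_\ast(\partial_t)=\nabla f/|\nabla f|$; in particular $\phi_\ast(\partial_t)$ is the outward unit normal to $M_s$ pointing in the direction of increasing $f$.

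Next I would unpack the definition of $\Pi_c^\epsilon$. By definition it sends $x\in M_c$ to $\exp_x\big(\phi_\ast(\epsilon\partial_t)\big)$, namely to the endpoint of the geodesic issuing from $x$ with initial velocity $\epsilon$ times the unit normal. Writing $x=\phi(c,p)$, the coordinate curve $s\mapsto \phi(s,p)$ through $x$ is exactly the unit-speed geodesic realizing this initial condition, so by uniqueness of geodesics $\exp_x\big(\phi_\ast(\epsilon\partial_t)\big)=\phi(c+\epsilon,p)$, and hence $\Pi_c^\epsilon\big(\phi(c,p)\big)=\phi(c+\epsilon,p)$. On the other hand, the identification furnished by the diffeomorphism $\phi$ is $\phi_{c+\epsilon}\circ\phi_c^{-1}$, which sends $x=\phi_c(p)$ to $\phi_{c+\epsilon}(p)=\phi(c+\epsilon,p)$. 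Comparing the two displays gives $\Pi_c^\epsilon=\phi_{c+\epsilon}\circ\phi_c^{-1}$ on all of $M_c$, which is the claim. The hypothesis that $I$ consist of regular values is used only to ensure $\nabla f\neq\vec{0}$ throughout, so that the unit normal, the level hypersurfaces, and the map $\phi$ are smooth and the flow remains inside the regular region.

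The only point requiring genuine care, and the main (though mild) obstacle, is the verification that the coordinate curves are honestly geodesics meeting the slices orthogonally. This is the Gauss-lemma-type fact encoded in the normal form $dt^2+g_t$, and it is already implicit in the construction following Lemma \ref{isopara1}; once it is in hand the identification is immediate and no further estimates are needed.
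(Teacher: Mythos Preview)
Your proof is correct and follows essentially the same route as the paper's: show that the coordinate curves $s\mapsto\phi(s,p)$ are unit-speed geodesics normal to the slices, then invoke uniqueness of geodesics to identify $\Pi_c^\epsilon$ with the slice-to-slice map induced by $\phi$. You supply more detail than the paper (the Christoffel/Gauss-lemma justification for why the curves are geodesic), whereas the paper simply asserts this as ``readily verified,'' but the argument is the same.
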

\begin{proof}
	For $p$ a point in $P$ and $c\in I$ such that $\phi(p, c)=q\in M_c$. Let $\gamma(t)$ be the curve given by, for $t\in (c-\epsilon, c+\epsilon)$, $ \gamma(t)=\phi (p, t).$	It is readily verified that $\ddt \gamma= \phi_{\ast} \partial_t.$ Thus,  $\gamma$ is a geodesic segment and by the uniqueness of a geodesic given initial conditions, we conclude that
	\[\Pi^\epsilon_c (q)= \gamma(\epsilon)=\phi(p, \epsilon). \]
	%The result follows.
	
\end{proof}

%The focal/ conjugate locus of $M_c$ is the set of critical points of $\Pi_c$. Consequently, much effort has been invested to study level set of critical values \cite{wang87iso, miyaoka13, gt13exotic, QT15}. We will need the following notion. % $\phi_c$ restriction to vectors of length $\text{dist}(M_c, V_{\pm})$ gives a focal map (submersion if the kernel is $V_{\pm}$ has co-dimension at least two. 

%\begin{definition}
%	The level set corresponding to the global maximum (minimum) value of $f$ is called a focal variety. 
%\end{definition}

A soliton structure comes with a potential function and Lemma \ref{equivalentconditions} is refined as follows, which is also implicit in \cite{PW09grsym}.
%it is intriguing to determine exactly if or when it is transnormal and isoparametric. We have the following observations. It is our convention that $\vec{0}$ is parallel with any vector. 
\begin{proposition}
	\label{rectequivalent}
	Let $(M, g, f)$ be a GRS with a non-constant $f$. The following are equivalent:
	\begin{enumerate} [label=(\roman*)]
		\item f is rectifiable,
		\item whenever $\nabla f\neq 0$, $\nabla f$ is an eigenvector of $\Rc$,
		\item $\nabla f \parallel \nabla \SS$ everywhere.
	\end{enumerate}
	Furthermore, $f$ is transnormal iff $f$ is isoparametric.  
\end{proposition}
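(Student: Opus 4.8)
The plan is to prove the chain of equivalences $(i)\Leftrightarrow(ii)\Leftrightarrow(iii)$ by exploiting the two soliton identities $\Rc(\nabla f)=\tfrac12\nabla\SS$ from \eqref{rcandf} and $\He f=\lambda g-\Rc$ from \eqref{grs}, which convert every statement about $\He f$ into one about $\Rc$ and $\nabla\SS$. First I would establish $(i)\Leftrightarrow(ii)$. By the rectifiability lemma already proved in Subsection \ref{isoparasection}, condition $(i)$ is equivalent to $\nabla f$ being an eigenvector of $\He f$ wherever $\nabla f\neq\vec0$. Since $\He f=\lambda g-\Rc$, the operators $\He f$ and $\Rc$ differ by a multiple of the identity, so they share the same eigenvectors; hence $\nabla f$ is an eigenvector of $\He f$ if and only if it is an eigenvector of $\Rc$, which is exactly $(ii)$. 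This step is essentially formal.

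Next I would prove $(ii)\Leftrightarrow(iii)$, where the identity \eqref{rcandf} does the real work. Assume $(ii)$: wherever $\nabla f\neq\vec0$, we have $\Rc(\nabla f)=\mu\,\nabla f$ for some function $\mu$. Then $\tfrac12\nabla\SS=\Rc(\nabla f)=\mu\,\nabla f$, so $\nabla\SS\parallel\nabla f$ at every point where $\nabla f\neq\vec0$; and where $\nabla f=\vec0$ the identity gives $\tfrac12\nabla\SS=\Rc(\vec0)=\vec0$, so $\nabla\SS=\vec0$ is parallel to $\nabla f=\vec0$ by the stated convention. Thus $(ii)\Rightarrow(iii)$. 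For the converse, suppose $\nabla f\parallel\nabla\SS$ everywhere. At a point with $\nabla f\neq\vec0$ this means $\nabla\SS=\rho\,\nabla f$ for a scalar $\rho$, and then \eqref{rcandf} reads $\Rc(\nabla f)=\tfrac12\nabla\SS=\tfrac\rho2\,\nabla f$, exhibiting $\nabla f$ as an eigenvector of $\Rc$. Hence $(iii)\Rightarrow(ii)$.

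The main subtlety, and the step I would treat most carefully, is the behavior on the critical set $\{\nabla f=\vec0\}$ and the global consistency of the scalar function $\mu$ (equivalently $\rho$). One should check that $(iii)$ is genuinely a pointwise \emph{everywhere} condition while $(ii)$ is only asserted off the critical set; the convention that $\vec0$ is parallel to any vector reconciles the two, and \eqref{rcandf} guarantees $\nabla\SS=\vec0$ automatically wherever $\nabla f=\vec0$, so no information is lost there. A further point worth a remark is that real analyticity of the soliton structure ensures the eigenvalue function and the set $\{\nabla f\neq\vec0\}$ are well behaved, so that the almost-everywhere versus everywhere distinction used in the functional-dependence discussion of the introduction matches the pointwise statements here; but for the logical equivalence itself only the two algebraic identities \eqref{grs} and \eqref{rcandf} together with the earlier rectifiability lemma are needed, and the argument is short once those are in hand.
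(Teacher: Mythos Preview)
Your proof is correct. Both you and the paper establish $(ii)\Leftrightarrow(iii)$ the same way, via \eqref{rcandf}. The difference lies in how $(i)$ is linked to the other conditions: you connect $(i)$ directly to $(ii)$ using the earlier lemma (rectifiable $\Leftrightarrow$ $\nabla f$ is an eigenvector of $\He f$) together with the algebraic observation that $\He f=\lambda g-\Rc$ forces $\He f$ and $\Rc$ to share eigenvectors. The paper instead proves $(i)\Leftrightarrow(iii)$ by a level-set argument: rectifiability means $|\nabla f|$ is constant on regular level components, and the conservation law \eqref{nablafandS} then forces $\SS$ constant there too (and conversely), with Sard's theorem and continuity extending this to the full manifold. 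Your route is shorter and purely algebraic once the rectifiability lemma is in hand; the paper's route is slightly more geometric and makes explicit the role of the conservation law, which is thematically relevant later in the paper when level-set geometry is analyzed in detail.
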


\begin{proof} $f$ is rectifiable iff $|\nabla f|$ is constant on each regular connected component and, by equation (\ref{nablafandS}), so is $\SS$. By Sard's theorem, the set of regular values is dense. Thus, by continuity, it is equivalent to $\nabla f \parallel \nabla \SS$ everywhere. That establishes $(i)\iff (iii)$. Finally, $(ii) \iff (iii)$ follows from equation (\ref{rcandf}).
	
	% and, by continuity, the result follows.\\
	
%	It follows from definitions and GRS identities \ref{rcandf} and \ref{nablafandS}. 
%	We break down the proof into several claims. 
	
%	\textit{Claim:}  
	
%	\textit{Proof:} It is due to the equation (\ref{rcandf}).\\	
	
%	\textit{Claim:} $(i)\implies (iii)$.
	
%	\textit{Proof:} 
	
%	\textit{Claim:} $(iii)\implies (i)$.
	
%	\textit{Proof:} On each connected component $M_c$ of regular level set of $f$, $\nabla S$ is perpendicular to $TM_c$ and, consequently, $\SS$ is constant on $M_c$. Because of equation (\ref{nablafandS}), so is $|\nabla f|$. The result then follows.  %By Sard's theorem and continuity, $|\nabla f|$ is constant on each connectelevel set and 
	%\[|\nabla f|^2=b(f)\]
	%for a continuous function $b$. 
	
\end{proof}
%The next is also immediate and implicit in \cite{PW09grsym}. 
%\begin{proposition}
%	\label{isoandequivalent}
%	Let $(M, g, f)$ be a GRS with a non-constant $f$. The followings are equivalent:
%	\begin{enumerate} [label=(\roman*)]
%		\item f is transnormal,
%		\item f is isoparametric,
%		\item f is transnormal and $b$ is $C^\infty$.
%	\end{enumerate}
%\end{proposition}
%\begin{proof} The proof is based on the following claims.
	
%	\textit{Claim:} $(i)\leftrightarrow (ii)$.
	
%	\textit{Proof:} If $f$ is transnormal, then $|\nabla f|$ is constant on each level set and, by equation (\ref{nablafandS}), so is $\SS$. Consequently, $\Delta f$ is also constant on each level set due to equation (\ref{deltaS}).\\
	
%	\textit{Claim:} $(i)\leftrightarrow (iii)$.
%	Let $\gamma: \mathbb{R}\mapsto M$ be an $f$-segment. Since $\gamma$ is a geodesic, it is smooth and so is $\SS\circ (f\circ \gamma)$. By Remark \ref{fsegment}, one may assume $f\circ \gamma=\id$ up to a critical value of $f$.  Thus, $\SS$, as a function of $f$ on $f(M)$, is smooth and so is $|\nabla f|^2$ by equation (\ref{nablafandS}). 
	
%\end{proof}
When $f$ is rectifiable, $b: f(M)\mapsto \RR$ can be defined locally and is smooth at a regular value. At a critical one, it is only smooth in the sense of one-sided limits. %Apparently, such condition is not sufficient for certain results in \cite{wang87iso}. 
%\end{remark}
%\begin{remark}\label{smoothb}
	%When $f$ is rectifiable, $b$ can be defined locally and the smoothness also follows. 
%\end{remark}
Furthermore, the local foliation of equidistant hypersurfaces allows one to rewrite the soliton equation as follows \cite{DW11coho}. For $N=\partial_t$, the shape operator is given as $L X := \nabla_X N.$ Denoting the ordinary derivative $\frac{d}{dt}$ by $'$, it follows that
\begin{align}
	\label{Levolve}
	g'&= 2 g\circ L, ~~~~~~	\nabla_{N} L= L'.
\end{align}
\noindent 
Due to Gauss, Codazzi, and Riccati equations, the Ricci curvature of $(M, g)$ is determined by that of $(M_t, g_t)$ and the shape operator. That is,  % For tangential vectors $X$ and $Y$, %(DM convention)
\begin{align}
	\Rc(X, Y) &=\Rc_t(X, Y)-\tr(L_t)g_t(LX, Y)-g_t({L'}(X), Y), \nonumber\\
	\label{Rccomp}
	\Rc(X, N) &=-\nabla_X \tr(L_t)-g_t(\delta L, X),\\ 
	\Rc(N, N) &= -\text{tr}({L'})-\text{tr}(L^2).\nonumber
\end{align}
\noindent
Here $\Rc_t$ denotes the Ricci curvature of $(M_t, g_t)$, $\tr{T}=\tr_{g_t}T_t$, and $\delta$ the co-differential. Next, we recall $\He f(X, Y)=g(\nabla_X {\nabla f}, Y)$ and $\nabla f= \frac{df}{dt} N=f' N$. Consequently, %we have, for tangential vector fields $X, Y$ of $P$, 
%\begin{align}
%	\He{f}(X, N) &=0,\nonumber\\
%	\label{hessf}
%	\He{f}(N, N)&= f'',\\
%	\He{f}(X, Y)&= f' g_t(L_t(X), Y).\nonumber 
%\end{align}
the gradient Ricci soliton equation $\Rc+\He{f}=\lambda g$ is reduced to:
\begin{align}
	0 &=-(\delta L)-\nabla \tr{L},\nonumber \\
	\label{reducedsolitionsystem}
	\lambda &=-\text{tr}({L'})-\text{tr}(L^2)+f'',\\
	\lambda g(X, Y) &=\Rc(X, Y)-(\tr{L}) g(L X, Y)-g({L'}(X), Y) +f' g(LX, Y). \nonumber
\end{align}

\section{Deformed Sasakian}
\label{defornedsasakian}
In this section, we give a characterization for an almost contact metric manifold to be a deformed Sasakian structure, proving Theorem \ref{chardeformed}. Consequently, it is possible to detect such a structure on a real hypersurface of a K\"{a}hler manifold via examining its shape operator as in Theorem \ref{hyperdeformedSasa}. Thus, they generalize classical results, \cite[Theorem 7.3.16 and Theorem 7.3.18]{BGbookSasakian08}, and might be of independent interest. 

Let $(P, \zeta, \eta, \Phi, g)$ be an almost contact metric manifold and $\mathcal{F}$ be the foliation generated by $\zeta$. The following is well-known.  
\begin{regtheorem}\cite[Theorem 7.3.19]{BGbookSasakian08}
	\label{classicalSasa}
	$(P, \zeta, \eta, \Phi, g)$ is Sasakian if and only if, for every vector fields $X$ and $Y$,
	\begin{equation}
		\big(\nabla_X \Phi\big) Y =  g(X, Y)\zeta- \eta(Y)X. \end{equation}
\end{regtheorem} 

Towards a generalization, we observe the following. 

\begin{lemma}
	\label{killing}
If	$\big(\nabla_X \Phi\big) Y =  bg(X, Y)\zeta- b\eta(Y)X$ for a real number $b$ and any vector fields $X$ and $Y$ then $g$ is bundle-like with respect to $\mathcal{F}$ and $\zeta$ is Killing. 
\end{lemma}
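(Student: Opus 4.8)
The plan is to reduce both conclusions to a single clean identity for the covariant derivative of $\zeta$, namely $\nabla_X \zeta = -b\,\Phi(X)$ for all $X$. Once this is established, the fact that $\zeta$ is Killing follows at once from the skew-symmetry of $\Phi$ with respect to $g$, and the bundle-like property is then immediate from Proposition \ref{1dimKilling}, which asserts that a one-dimensional foliation generated by a Killing field is Riemannian.

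To derive $\nabla_X \zeta = -b\,\Phi(X)$, I would first record the standard identity $\eta(\cdot)=g(\cdot,\zeta)$, obtained by setting $Y=\zeta$ in the compatibility relation $g(\Phi X,\Phi Y)=g(X,Y)-\eta(X)\eta(Y)$ together with $\Phi\zeta=0$. Differentiating $\Phi\zeta=0$ gives $\Phi(\nabla_X\zeta)=-(\nabla_X\Phi)\zeta$, and evaluating the hypothesis at $Y=\zeta$ yields $(\nabla_X\Phi)\zeta=b\eta(X)\zeta-bX$. Applying $\Phi$ once more and using $\Phi^2=-\id+\zeta\otimes\eta$ turns the left-hand side into $-\nabla_X\zeta+\eta(\nabla_X\zeta)\zeta$; since $|\zeta|\equiv 1$ forces $\eta(\nabla_X\zeta)=g(\nabla_X\zeta,\zeta)=\tfrac12 X|\zeta|^2=0$, this collapses to $-\nabla_X\zeta=b\,\Phi(X)$, as desired.

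With the formula in hand, I would compute $(\lie_\zeta g)(X,Y)=g(\nabla_X\zeta,Y)+g(\nabla_Y\zeta,X)=-b\big(g(\Phi X,Y)+g(\Phi Y,X)\big)$. The skew-symmetry $g(\Phi X,Y)=-g(X,\Phi Y)$, which follows from the compatibility relation and $\eta\circ\Phi=0$ by writing $Y=-\Phi^2 Y+\eta(Y)\zeta$, makes the bracket vanish, so $\lie_\zeta g=0$ and $\zeta$ is Killing; the bundle-like conclusion then follows from Proposition \ref{1dimKilling}. The only genuinely delicate point is the derivation of $\nabla_X\zeta=-b\,\Phi(X)$ together with its supporting identities ($\eta=g(\cdot,\zeta)$, $\eta(\nabla_X\zeta)=0$, and the skew-symmetry of $\Phi$); everything afterward is a short verification, and the degenerate case $b=0$ is trivial since then $\nabla_X\zeta=0$.
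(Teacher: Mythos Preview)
Your argument is correct, and it is a genuinely different route from the paper's. You go straight for the global identity $\nabla_X\zeta=-b\,\Phi(X)$ by differentiating $\Phi\zeta=0$, applying $\Phi$ once more, and using $|\zeta|\equiv 1$; from this, $\lie_\zeta g=0$ drops out of the skew-symmetry of $\Phi$, and bundle-like follows from the first clause of Proposition~\ref{1dimKilling}. The paper instead works inside the foliation framework: it restricts to a foliate horizontal vector field $Y$, derives $\nabla_\zeta Y=-b\,\Phi(Y)$, and uses this to check the bundle-like condition $\zeta\,g(Y,Z)=0$ directly; only then does it invoke Lemma~\ref{almostgeodesic} (via $(\nabla_\zeta\Phi)\zeta=0$) to see that the leaves are geodesics, and finally the \emph{second} clause of Proposition~\ref{1dimKilling} (Riemannian foliation with geodesic leaves is isometric) to conclude that $\zeta$ is Killing. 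Your approach is shorter and avoids both foliate vector fields and Lemma~\ref{almostgeodesic}; the paper's route, while longer, keeps all computations phrased in the O'Neill/foliation language it uses throughout Section~\ref{defornedsasakian} and produces the intermediate fact that the leaves are geodesics along the way. As a bonus, your identity $\nabla_X\zeta=-b\,\Phi(X)$ is exactly what the paper later isolates (for horizontal $X$) in Lemma~\ref{nablaXPhi2}, so you are effectively obtaining that lemma early and for free.
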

\begin{proof}
	The assumption implies that, interchanging $X$ and $Y$,
	\begin{align*}
		b (\eta(X)Y-\eta(Y)X) &=\big(\nabla_X \Phi\big) Y-	\big(\nabla_Y \Phi\big) X= \nabla_X (\Phi Y)-\nabla_Y (\Phi X)-\Phi([X, Y])
\end{align*}
Let $X=\zeta$ and $Y$ be a foliate horizontal vector field then $[X, Y]$ is tangential and, thus,
\[\nabla_Y (\Phi X)=\Phi([X, Y])=0.\] 
Consequently, $\nabla_\zeta (\Phi Y)=bY.$ Applying the assumption again yields
\begin{align*} 0 &= b g(\zeta, \Phi Y)\zeta-\eta(\Phi(Y))b\zeta =\nabla_\zeta \Phi(\Phi Y)\\
	&=\nabla_\zeta \Phi^2(Y)-\Phi(\nabla_\zeta (\Phi Y))= \nabla_\zeta (-Y) - b\Phi (Y). 
\end{align*} 
Thus, for foliate horizontal vector fields $Y$ and $Z$,
\begin{align*}
\zeta g(Y, Z) &= g(\nabla_\zeta Y, Z)+ g(Y, \nabla_\zeta Z)= -b g(\Phi Y, Z)-b g(Y, \Phi Z)=0.
\end{align*}
The last equality is due to the compatibility of $g$ and $\Phi$. Therefore, $g$ is bundle-like by Definition \ref{bundlelike} and $\mathcal{F}$ is a Riemannian foliation. Furthermore, Lemma \ref{almostgeodesic} is also applicable since $\nabla_{\zeta}\Phi (\zeta)=b\zeta-b\zeta=0.$ Thus $\mathcal{F}$ is a Riemannian foliation whose orbits are geodesics. By Proposition \ref{1dimKilling}, $\zeta$ is a Killing vector field.

%\[(\lie_\zeta g)(X, Y)= g(\nabla_Y \zeta, X)+ g(\nabla_X \zeta, Y).\]
%At each point, there is a foliated chart and any vector field can be written as a sum of a multiple of $\zeta$ and a foliate vector field.  
\end{proof}

\begin{lemma}
	\label{nablazeta2} For a horizontal vector field $X$, the following are equivalent
	\begin{enumerate} [label=(\roman*)]
		\item $(\nabla_\zeta \Phi) X= 0$
		\item $[\zeta, \Phi(X)]-\Phi([\zeta, X])=\Phi(A_X\zeta)-A_{\Phi X} \zeta$.
	\end{enumerate}
\end{lemma}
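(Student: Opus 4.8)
The plan is to reduce the equivalence to a one-line computation, rewriting both brackets via the torsion-free identity $[\zeta,W]=\nabla_\zeta W-\nabla_W\zeta$ and then matching the resulting covariant-derivative terms against the O'Neill tensor $A$ from Subsection \ref{foliation}. The only genuine input beyond bookkeeping is that $\Phi$ preserves the horizontal distribution and that $\zeta$ has unit length, so that the vertical part of $\nabla_X\zeta$ drops out.

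Concretely, I would first expand the left-hand side of (ii) for a horizontal $X$. Since $\eta(\Phi X)=0$, the field $\Phi X$ is again horizontal, so both $X$ and $\Phi X$ are sections of $E^\perp$. Applying $[\zeta,W]=\nabla_\zeta W-\nabla_W\zeta$ to $W=\Phi X$ and $W=X$ gives
\begin{align*}
[\zeta,\Phi X]-\Phi([\zeta,X])
&=\nabla_\zeta(\Phi X)-\nabla_{\Phi X}\zeta-\Phi(\nabla_\zeta X)+\Phi(\nabla_X\zeta)\\
&=(\nabla_\zeta\Phi)X-\nabla_{\Phi X}\zeta+\Phi(\nabla_X\zeta),
\end{align*}
where the second line just uses the definition $(\nabla_\zeta\Phi)X=\nabla_\zeta(\Phi X)-\Phi(\nabla_\zeta X)$.

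Next I would identify the two leftover terms with the $A$-tensor. For any horizontal $Y$, the vertical component of $\nabla_Y\zeta$ is $g(\nabla_Y\zeta,\zeta)\,\zeta=\tfrac12(Y|\zeta|^2)\zeta=0$, since $|\zeta|\equiv 1$ on an almost contact metric structure. Hence $\nabla_Y\zeta=\pi^\perp(\nabla_Y\zeta)=A_Y\zeta$, which is exactly the O'Neill formula $\nabla_X U=\pi(\nabla_X U)+A_X U$ with the vertical part seen to vanish. Taking $Y=X$ and $Y=\Phi X$ yields $\Phi(\nabla_X\zeta)=\Phi(A_X\zeta)$ and $\nabla_{\Phi X}\zeta=A_{\Phi X}\zeta$, so that
\[
[\zeta,\Phi X]-\Phi([\zeta,X])=(\nabla_\zeta\Phi)X+\Phi(A_X\zeta)-A_{\Phi X}\zeta,
\]
and (i) holds if and only if (ii) does.

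There is no substantive obstacle here; the single point requiring care is that the $A$-tensor be available, which presupposes that $g$ is bundle-like with respect to $\mathcal{F}$ and that the leaves are totally geodesic. In the setting following Lemma \ref{killing} these properties hold, so I would simply note at the outset that we work under those hypotheses and invoke the identities collected in Subsection \ref{foliation}.
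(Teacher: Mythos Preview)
Your argument is correct and is essentially identical to the paper's: both derive the single identity
\[
(\nabla_\zeta\Phi)X=[\zeta,\Phi X]-\Phi([\zeta,X])-\Phi(A_X\zeta)+A_{\Phi X}\zeta
\]
via the torsion-free relation and the observation that $\nabla_Y\zeta=A_Y\zeta$ for horizontal $Y$ because $|\zeta|=1$. The only cosmetic difference is that the paper starts from $(\nabla_\zeta\Phi)X$ and expands, whereas you start from the bracket expression; your version is slightly more explicit about why the vertical part of $\nabla_Y\zeta$ vanishes and about the standing hypotheses needed for the $A$-tensor formalism.
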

\begin{proof}
	We compute, using the notation from Section \ref{preliminary},
	\begin{align*}
		(\nabla_\zeta \Phi) (X) &= \nabla_\zeta (\Phi X)-\Phi(\nabla_\zeta X)\\
		&= [\zeta, \Phi X]-\Phi([\zeta, X])-\Phi(\nabla_X\zeta)+\nabla_{\Phi X} \zeta,\\
		&= [\zeta, \Phi X]-\Phi([\zeta, X])-\Phi(A_X\zeta)+A_{\Phi X} \zeta.
	\end{align*}
\end{proof}

Moreover, since $\mathcal{F}$ is Riemannian, by Lemma \ref{computeA}, the tensor $A$ can be computed as, for horizontal vector fields $X$ and $Y$,
\begin{align*}
	%2d\eta(X, Y) &= -g([{X}, {Y}], \zeta),\\
	2A_X Y &=-2d\eta(X, Y)\zeta=g([{X}, {Y}], \zeta)\zeta \\
	g(A_X \zeta, Y) &= d\eta(X, Y).
\end{align*}	

\begin{lemma}
	\label{nablaXPhi1}
	For horizontal vector fields $X, Y$, the following are equivalent
	\begin{enumerate} [label=(\roman*)]
		\item $(\nabla_X \Phi)Y= b g(X, Y)\zeta$
		\item $A_X (\Phi Y)= b g^\perp (X, Y)\zeta$ and $\nabla_X^\perp (\Phi Y)=\Phi (\nabla_X^\perp Y)$
	\end{enumerate}
\end{lemma}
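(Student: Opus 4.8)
The plan is to split both sides of condition (i) into their components tangent and orthogonal to the leaves of $\mathcal{F}$ and then match them, using the O'Neill formalism that is available here because $\mathcal{F}$ is a Riemannian totally geodesic foliation. First I would record the one structural fact that makes everything work: $\Phi$ preserves the horizontal distribution $E^\perp$. Indeed $\eta\circ\Phi=0$, so $\Phi Y$ is horizontal for every $Y$, and therefore the splitting of the preceding subsection applies to it, giving $\nabla_X(\Phi Y)=A_X(\Phi Y)+\nabla_X^\perp(\Phi Y)$, where $A_X(\Phi Y)=\pi\bigl(\nabla_X(\Phi Y)\bigr)$ is vertical (a multiple of $\zeta$) and $\nabla_X^\perp(\Phi Y)$ is horizontal.

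Next I would expand $(\nabla_X\Phi)Y=\nabla_X(\Phi Y)-\Phi(\nabla_X Y)$ and simplify the second term. Writing $\nabla_X Y=A_X Y+\nabla_X^\perp Y$ and applying $\Phi$, the key observation is that $A_X Y$ is a multiple of $\zeta$, so $\Phi(A_X Y)=0$ because $\Phi(\zeta)=0$; hence $\Phi(\nabla_X Y)=\Phi(\nabla_X^\perp Y)$, which is horizontal since $\Phi$ preserves $E^\perp$. Combining the two computations yields the single identity
\[
(\nabla_X\Phi)Y = A_X(\Phi Y) + \bigl(\nabla_X^\perp(\Phi Y)-\Phi(\nabla_X^\perp Y)\bigr),
\]
a sum of a vertical term $A_X(\Phi Y)$ and a horizontal term $\nabla_X^\perp(\Phi Y)-\Phi(\nabla_X^\perp Y)$.

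Finally, condition (i) asserts that the left-hand side equals $bg(X,Y)\zeta$, which is purely vertical; and for horizontal arguments $g(X,Y)=g^\perp(X,Y)$. Matching the vertical and horizontal parts of the displayed identity via uniqueness of the orthogonal decomposition then gives $A_X(\Phi Y)=bg^\perp(X,Y)\zeta$ together with $\nabla_X^\perp(\Phi Y)=\Phi(\nabla_X^\perp Y)$, which is precisely condition (ii); reading the same identity in the other direction gives the converse. I do not expect a genuine obstacle here: the proof is purely formal once $\Phi Y$ is known to be horizontal and $\Phi(\zeta)=0$ is used to kill $\Phi(A_X Y)$. The only point demanding care is the vertical-versus-horizontal bookkeeping, which relies on the standing hypothesis of this section that $g$ is bundle-like with geodesic $\zeta$-orbits, so that the O'Neill tensor $A$ and the transverse connection $\nabla^\perp$ are legitimately defined.
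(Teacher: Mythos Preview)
Your argument is correct and follows exactly the paper's approach: expand $(\nabla_X\Phi)Y=\nabla_X(\Phi Y)-\Phi(\nabla_X Y)$, decompose each term via $A$ and $\nabla^\perp$, use $\Phi(A_XY)=0$ because $A_XY$ is vertical, and then read off the vertical and horizontal components. If anything, you are more explicit than the paper about why $\Phi Y$ is horizontal and why $g(X,Y)=g^\perp(X,Y)$ for horizontal arguments.
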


\begin{proof}
	We compute, 
	\begin{align*}
		(\nabla_X \Phi) (Y) &= (\nabla_X (\Phi Y)-\Phi(\nabla_X Y)\\
		&=A_X (\Phi Y)+\pi^\perp (\nabla_X (\Phi Y))-\Phi(A_X Y+ \pi^\perp (\nabla_X Y))\\
		&=A_X (\Phi Y)+\nabla^\perp_X (\Phi Y))-\Phi(\nabla^\perp_X Y).
	\end{align*}
\end{proof}
	\begin{lemma}
		\label{nablaXPhi2}
		For horizontal vector fields $X, Y$, the following are equivalent
		\begin{enumerate} [label=(\roman*)]
			\item $(\nabla_X \Phi)\zeta= -bX$
			\item $A_X \zeta= -b\Phi X$. 
		\end{enumerate}
	\end{lemma}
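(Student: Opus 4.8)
The plan is to collapse both conditions onto the single intermediate identity $(\nabla_X\Phi)\zeta = -\Phi(A_X\zeta)$, after which the equivalence becomes purely algebraic. First I would expand the covariant derivative of the tensor: since $\Phi(\zeta)=0$,
\[(\nabla_X\Phi)\zeta = \nabla_X(\Phi\zeta) - \Phi(\nabla_X\zeta) = -\Phi(\nabla_X\zeta).\]
So everything hinges on identifying $\nabla_X\zeta$. Here $\zeta$ is the unit generator of the leaves of $\mathcal{F}$, i.e.\ a vertical field, while $X$ is horizontal. Decomposing $\nabla_X\zeta$ into vertical and horizontal parts, the vertical component is $g(\nabla_X\zeta,\zeta)\zeta = \tfrac12(X|\zeta|^2)\zeta$, which vanishes because $|\zeta|\equiv 1$. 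Hence $\nabla_X\zeta = \pi^\perp(\nabla_X\zeta) = A_X\zeta$ is horizontal, and we obtain $(\nabla_X\Phi)\zeta = -\Phi(A_X\zeta)$.

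With this in hand the two statements differ only by an application of $\Phi$. On the horizontal subbundle $\eta$ vanishes, so the structure identity $\Phi^2 = -\id + \zeta\otimes\eta$ reduces to $\Phi^2 = -\id$; in particular $\Phi$ is an isomorphism of the horizontal distribution with inverse $-\Phi$. If (i) holds, then $-\Phi(A_X\zeta) = -bX$, i.e.\ $\Phi(A_X\zeta) = bX$; applying $\Phi$ and using $\Phi^2 = -\id$ on the horizontal vector $A_X\zeta$ gives $-A_X\zeta = b\Phi X$, which is (ii). Conversely, substituting $A_X\zeta = -b\Phi X$ into $(\nabla_X\Phi)\zeta = -\Phi(A_X\zeta)$ and applying $\Phi^2 X = -X$ returns $-bX$, which is (i).

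There is no genuine obstacle here; the only points requiring care are that the vertical part of $\nabla_X\zeta$ really is zero (this uses the unit length of $\zeta$, equivalently that the leaves are geodesics in the bundle-like setting established in Lemma \ref{killing}) and that $\Phi$ is invertible precisely on horizontal vectors, so that one may pass freely between the two formulations by multiplying through by $\Phi$. Both facts are immediate from the preliminaries, so the proof reduces to a two-line computation once the intermediate identity $(\nabla_X\Phi)\zeta = -\Phi(A_X\zeta)$ is isolated.
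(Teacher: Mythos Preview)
Your proposal is correct and follows essentially the same route as the paper: both reduce to the identity $(\nabla_X\Phi)\zeta=-\Phi(A_X\zeta)$ and then invoke that $\Phi$ restricts to an isomorphism of the horizontal distribution. The paper's version is terser---it passes directly from $\Phi(\nabla_X\zeta)$ to $\Phi(A_X\zeta)$ using only that $\Phi$ annihilates the vertical part, whereas you take the extra step of showing the vertical part of $\nabla_X\zeta$ is itself zero via $|\zeta|\equiv 1$; either way the computation and the logical structure are the same.
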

	
	\begin{proof}
		We compute
		\begin{align*}
			(\nabla_X \Phi) (\zeta) &= (\nabla_X (\Phi(\zeta))-\Phi(\nabla_X \zeta)\\
			&=-\Phi(A_X\zeta).
		\end{align*}
	\end{proof}

Thus, it is possible to track how the covariant derivative of $\Phi$ changes under a deformation. %There might be a direct proof based on \cite{} but the computation is also non-trivial. 
\begin{regtheorem}
	\label{varcon}
	Let  $(P, \zeta, \eta, \Phi, g)$ be an almost contact metric structure such that, for every vector fields $X$ and $Y$,
	\begin{equation*}
		\nabla_X \Phi (Y) =  bg(X, Y)\zeta- \eta(Y)bX.
	\end{equation*} 
Let $(P, \zeta', \eta', \Phi', g')$ be an $(\pm, 1, c)$-deformation then
	\begin{equation*}
		\nabla'_X \Phi' (Y) =  \pm \frac{b}{c^2} \big(g(X, Y)\zeta'- \eta(Y)X\big).
	\end{equation*} 
\end{regtheorem}

\begin{proof}
%	Suppose that $(M, \zeta, \eta, \Phi, g)$ is an almost contact metric manifold satisfying, for any vector fields $V, W$ and non-zero constant $b$, 
%	\begin{align*}
%		\nabla_V \Phi (W) &=  bg(V, W)\zeta-\eta(W)b V.
%	\end{align*} 
 Let $\mathcal{F}$ be the foliation generated by $\zeta$. By Lemma \ref{killing}, $\mathcal{F}$ is Riemannian and $\zeta$ is a Killing vector field. We write the metric as 
	\begin{equation*}
		g= g^\perp +\eta\otimes \eta.
	\end{equation*}
	By Lemmas \ref{nablaXPhi1} and \ref{nablaXPhi2}, for horizontal vector fields $X, Y$,
	\begin{align}
		A_X\Phi(Y) &= b g(X, Y)\zeta,\nonumber\\
		\label{auxideformed}
		\nabla^\perp_X \Phi(Y) &= \Phi(\nabla_X^\perp Y),\\
		A_X\zeta &= -b\Phi(X)\nonumber. 
	\end{align}
We will assume $(P, \zeta', \eta', \Phi', g')$ be an $(1, c)$-deformation as the minus case can be done similarly. Thus, $\eta'= \eta$, $\zeta'=\zeta$, $\Phi'=\Phi$ and
	\[ g'= c^2 g+(1-c^2)\eta\otimes \eta= c^2 g^\perp + \eta\otimes \eta. \]
By Lemma \ref{basicFH}, $(P, g', \eta', \zeta', \Phi')$ is an almost contact metric structure, $\zeta=\zeta'$ is a Killing unit vector field with respect to $g'$, and $g'$ is bundle-like with respect to $\mathcal{F}$. By Lemma \ref{computeA} and equation (\ref{auxideformed}), we have
	\begin{align*}
		A'_X (\Phi' Y) &=-d\eta'(X, \Phi'(Y))\zeta'= d\eta(X, \Phi(Y))\zeta= A_X\Phi(Y)\\
		&= bg^\perp (X, Y)\zeta= \frac{b}{c^2} g'(X, Y)\zeta'. 
	\end{align*}
	Similarly, 
	\begin{align*}
		g'(A'_X \zeta', Y)  &=d\eta'(X, Y)=d\eta(X, Y)=g(A_X\zeta, Y)\\
		&= -g(b\Phi(X), Y)=-\frac{b}{c^2}g'(\Phi(X), Y). 
	\end{align*}
Thus, $A'_X\zeta= -\frac{b}{c^2}\Phi(X)$. As the Levi-Civita connection is scaling invariant, $\nabla^{c^2 g^\perp}= \nabla^{g^\perp}=\nabla^\perp$. By Lemma \ref{nablazeta2}, $\nabla_\zeta \Phi(X)=0$ if and only if
\begin{align*}
	[\zeta, \Phi(X)]-\Phi([\zeta, X]) &=\Phi(A_X\zeta)-A_{\Phi X} \zeta=\Phi(-b\Phi(X))+b\Phi(\Phi(X))=0.
\end{align*}
As the left-hand side is independent of the metric, $\nabla'_{\zeta'} \Phi'(X)=(\nabla_\zeta \Phi)(\zeta)=0$. Together with Lemmas \ref{nablaXPhi1} and \ref{nablaXPhi2}, we obtain, for any vector fields $V$ and $W$,
\[\nabla'_V \Phi' (W) =  \frac{b}{c^2} \Big( g'(V, W)\zeta'- \eta'(W)V\Big).\]

\end{proof}

\begin{regcorollary}
	\label{varconHF}
	Let  $(P, \zeta, \eta, \Phi, g)$ be a Sasakian manifold and $(P, \zeta', \eta', \Phi', g')$ be a $(\pm, H, F)$-deformation. Then
	\begin{equation*}
		\nabla'_X \Phi' (Y) = \frac{\pm H}{F^2} \big(g(X, Y)\zeta'- \eta(Y)X\big).
	\end{equation*} 
\end{regcorollary}

\begin{proof}
	By Remark \ref{scalingdiff}, a $ (H, \sqrt{H})$-deformation is Sasakian. Thus, applying Theorem \ref{varcon} for $b=1, c= \frac{F}{\sqrt{|H|}}$ in combination with Theorem \ref{classicalSasa} yields the result.  
\end{proof}
We are ready to give the proof of Theorem \ref{chardeformed}. 
\begin{proof}[Proof of Theorem \ref{chardeformed}]
		For part (i), without loss of generality, we assume $b>0$ (if $b<0$ then we consider $(P, \zeta, \eta, -\Phi$). One direction follows from Corollary \ref{varconHF}. The other is deduced by applying Theorem \ref{varcon} for $c=\sqrt{b}.$

	For part (ii), $b=0$ means $\Phi$ is parallel since, for every vector fields $X$ and $Y$ 
	\[\nabla_X \Phi(Y)=0.\]
	By Lemmas \ref{almostgeodesic} and \ref{killing}, $\zeta$ is a Killing and geodesic vector field. By Lemma \ref{nablaXPhi2}, for any horizontal vector field $X$,
	\[\nabla_X \zeta= A_X \zeta=0.\]
	Thus, $\zeta$ is a unit-length parallel vector field and, accordingly, its universal cover splits into a Riemannian product. The factor perpendicular to $\zeta$ has $\Phi$ as an almost complex structure. Its compatibility with $g$ makes the induced metric almost Herminian. Since $\Phi$ is parallel, the metric is {K}\"{a}hler by Proposition \ref{charKahler}. 
\end{proof}

\begin{proof}[Proof of Theorem \ref{hyperdeformedSasa}] Let $\nabla$ and $\nabla^P$ denote the Levi-Civita connection on $(M, g)$ and $P$ with the induced metric, respectively. We have
	\[\nabla_X Y= (\nabla^P_X Y)- g(LX, Y) V.\]
	Since $J$ is parallel with respect to $\nabla$, $\nabla_X (JY) = J (\nabla_X Y)$. Thus,  
	\begin{align*} 
		\nabla^P_X (\Phi Y)- g(LX, \Phi Y)V+X\eta(Y)V+\eta(Y)LX &= \Phi(\nabla^P_X Y)+ \eta(\nabla_X Y)V+g(LX, Y)\zeta.
	\end{align*}
	Consequently, equating the normal and tangent components give
	\begin{align*}
		(\nabla^P_X \Phi)Y &=- \eta(Y)LX+ g(LX, Y)\zeta,\\
		-g(LX, \Phi Y) +X\eta(Y) &= \eta(\nabla_X Y)
	\end{align*}
	For the first equation, substituting $L=\alpha \id+\beta \zeta\otimes \eta$ yields
	\begin{align*}
		(\nabla^P_X \Phi)Y &= -\eta(Y)\Big(\alpha X+ \beta\zeta\eta(X)\Big)+ g\Big(\alpha X+ \beta \zeta\eta(X), Y\Big)\zeta\\
		&=-\alpha \eta(Y)X+ \alpha g(X, Y)\zeta.
		\end{align*}
	Similarly, for the second equation, we have
	\begin{align*}
		\eta(\nabla_X Y)&= -g(\alpha X+ \beta \zeta\eta(X), \Phi Y) +X\eta(Y),\\
	\iff	g(\nabla_X \zeta, Y) &= \alpha g(X, \Phi (Y))=\alpha g(\Phi(X),-Y),\\
	\iff	\nabla_X \zeta &= -\alpha\Phi(X). 
	\end{align*}
By Lemmas \ref{killing} and \ref{nablaXPhi2}, the second equation is implied by the first. The result then follows from Theorem \ref{chardeformed}. % and Lemma \ref{2ndform}.
\end{proof}

\section{K\"{a}hler Solitons and Contact Structures}
\label{kahlerandcontac}
In this section, we'll study a K\"{a}hler GRS $(M, g, J, f)$ under the assumption that each regular connected component level set of $f$ is a deformed contact structure. Thus, a proof of Theorem \ref{HDcontact} will be provided.  For $c\in f(M)$, we recall the construction (\ref{standardconstruction}), for a regular value $c$,
\[V:=\frac{\nabla f}{|\nabla f|},~~ g_c:= g_{\mid M_c}, ~~\zeta_c:=-J(V),~~\eta_c(\cdot):= g(\cdot, \zeta_c), ~~ \Phi_c (\cdot):= -\eta_c(\cdot) V+ J(\cdot).  \]
Together, $(M_c, \zeta_c, \eta_c, \Phi_c, g_c)$ is an almost contact metric structure. Additionally, the shape operator is given by 
\[ g(L X, Y) =g(\nabla_X V, Y)=g\big(\nabla_X \frac{\nabla f}{|\nabla f|}, Y\big)=\frac{|\nabla f|\He f(X,Y)-X|\nabla f| g(\nabla f, Y) }{|\nabla f|^2}.\]
Thus,
\begin{align}\label{LandHess}
	\He f(X,Y)=|\nabla f|g(L X, Y)+\frac{X|\nabla f|}{|\nabla f|}g(\nabla f, Y).\end{align}
We observe the following.
\begin{lemma}
	\label{HDftransnormal}
	If $(M_c, \zeta_c, \eta_c, \Phi_c, g_c)$ is a deformed connected contact metric structure, then it is a deformed $K$-contact structure and $|\nabla f|$ is constant along $M_c$. 
\end{lemma}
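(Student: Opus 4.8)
The plan is to reduce both conclusions to the single fact that $|\nabla f|$ is constant along $M_c$, and then to extract that fact from the deformed contact hypothesis. First I would record that, by Lemma \ref{killing1}, the vector field $\xi:=J(\nabla f)$ is Killing on $(M,g)$; since $\xi(f)=g(J\nabla f,\nabla f)=0$, its flow preserves $f$, hence each level set $M_c$, and therefore restricts to a flow of isometries of $(M_c,g_c)$, making $\xi|_{M_c}$ a Killing field of $M_c$. Because $\zeta_c=-\tfrac{1}{|\nabla f|}\xi$, a short symmetrization of $\nabla^{M_c}\zeta_c$ (in which the $\nabla\xi$ terms cancel by skew-symmetry of $\nabla\xi$) shows that $\zeta_c$ is Killing on $M_c$ exactly when $|\nabla f|$ is constant along $M_c$. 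Thus it suffices to prove the latter; the deformed $K$-contact conclusion is then immediate.

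The engine of the argument is an expression for $d\eta_c$ in terms of $\He f$. I would extend $V=\nabla f/|\nabla f|$ and $\zeta=-JV$ to the neighborhood where $\nabla f\neq \vec{0}$, differentiate $\zeta$ using that $J$ is parallel, and combine the identities $\He f(X,V)=X|\nabla f|$ and (from the $J$-invariance of $\He f$) $\He f(X,JY)=-\He f(Y,JX)$. With the paper's convention for $d\eta$ this yields, for all $X,Y$ tangent to $M_c$,
\[
2|\nabla f|\,d\eta_c(X,Y)=2\He f(X,JY)-(X|\nabla f|)\,\eta_c(Y)+(Y|\nabla f|)\,\eta_c(X).
\]

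Next I would feed in the deformed contact hypothesis $d\eta_c(X,Y)=a\,g_c(X,\Phi_c Y)=a\,g(X,JY)$ and test on adapted vectors. Taking $X=Y=\zeta_c$ and using $J\zeta_c=V$, $g(\zeta_c,V)=0$ kills the left side and leaves $\zeta_c|\nabla f|=0$. For a horizontal $Z$ the relation with $X=\zeta_c$, $Y=Z$ gives $\He f(\zeta_c,JZ)=-\tfrac12 Z|\nabla f|$, whereas evaluating the same quantity directly via $J$-invariance gives $\He f(\zeta_c,JZ)=-\He f(V,Z)=-Z|\nabla f|$; comparing the two forces $Z|\nabla f|=0$. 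Since $M_c$ is connected and every tangent derivative of $|\nabla f|$ now vanishes, $|\nabla f|$ is constant on $M_c$, whence $\zeta_c$ is a constant multiple of the Killing field $\xi|_{M_c}$ and the structure is deformed $K$-contact.

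The main obstacle I anticipate is the bookkeeping in the two middle steps: deriving the $d\eta_c$ formula correctly (tracking the normal component of $\nabla_X V$ and the sign conventions for $d\eta$ and $\Phi_c$), and then recognizing that the deformed contact hypothesis only bites through the comparison of the two evaluations of $\He f(\zeta_c,JZ)$ — the clash of the factor $\tfrac12$ against $1$ is precisely what annihilates the horizontal gradient of $|\nabla f|$. Everything else is routine once these identities are in place.
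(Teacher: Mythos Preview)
Your argument is correct. The $d\eta_c$ formula you derive is right, and the ``$\tfrac12$ versus $1$'' clash between the two evaluations of $\He f(\zeta_c,JZ)$ really does force $Z|\nabla f|=0$; together with $\zeta_c|\nabla f|=0$ and connectedness this gives $|\nabla f|$ constant on $M_c$, whence $\zeta_c$ is a constant multiple of the Killing field $J\nabla f$ and the structure is deformed $K$-contact.

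Your route is genuinely different from the paper's. The paper extracts from the deformed contact hypothesis only the single consequence $d\eta_c(\zeta_c,\cdot)=0$, then invokes Lemma~\ref{almostgeodesic} to conclude that integral curves of $\zeta_c$ are geodesics; writing $W=J\nabla f=-|\nabla f|\zeta_c$ and using that $W$ is Killing, the geodesic condition $\nabla_{W/|W|}(W/|W|)=0$ unwinds (with $W|\nabla f|=0$) to $g(\nabla_W W,X)=0$, and then the Killing skew-symmetry gives $0=g(\nabla_X W,W)=\tfrac12 X|W|^2$. So the paper argues \emph{geodesic orbits $+$ Killing $\Rightarrow$ constant length}, whereas you bypass Lemma~\ref{almostgeodesic} entirely and compute $d\eta_c$ explicitly through $\He f$. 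Your approach is more self-contained and makes the role of the $J$-invariance of $\He f$ very explicit; the paper's is shorter and highlights the geometric content (the Reeb orbits being geodesics). Note, incidentally, that your $X=Y=\zeta_c$ test does not actually use the contact hypothesis---$\He f(\zeta_c,V)=\He f(-JV,V)=0$ is automatic from $J$-invariance---so the hypothesis enters only at the horizontal step, exactly as you say.
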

\begin{proof}
	For convenience, we drop the $c$ dependence. By Lemma \ref{killing1}, $W:=J(\nabla f)= -|\nabla f|\zeta$ is a Killing vector field. It suffices to show $|\nabla f|$ is constant along $M_c$. First, we observe that, since $\He f$ is $J$-invariant:
	\begin{align*}
		\nabla_W |\nabla f|^2 &= 2g(\nabla_W \nabla f, \nabla f)=2 \He f(W, \nabla f)=0. 
	\end{align*}
	Next, the deformed contact metric structure implies that
	\[d\eta(\zeta, \cdot)=0.\]
	By Lemma \ref{almostgeodesic}, the integral curves of $\zeta$ are geodesics. Therefore, for any $X$ tangential to $M_c$,
	\begin{align*}
		0=g(\nabla_{\frac{W}{|W|}}\frac{W}{|W|}, X) &=\frac{|W|g(\nabla_W W, X)-W|\nabla f| g(W, X) }{|\nabla f|^3}.
	\end{align*}
	Since $W |\nabla f|^2=0$ we have $g(\nabla_W W, X)=0$. Since $W$ is a Killing vector field,
	\[0=g(\nabla_X W, W)=\frac{1}{2} X|W|^2.\]
	As $|W|=|\nabla f|$ the result follows. 	
	 
\end{proof}
If $f$ is rectifiable, for horizontal vector fields $X, Y$,
\begin{align}
	2d\eta_c (X, Y) &=(\nabla_X \zeta_c, Y)-(\nabla_Y\zeta_c, X), \nonumber\\
	&=\frac{1}{|\nabla f|}\Big((\nabla_X(-J\nabla f), Y)-(\nabla_Y (-J\nabla f), X)\Big),\nonumber\\
	\label{detaandhess}
	&= \frac{2}{|\nabla f|}\He f(X, JY).
\end{align} 
Here we also use that $\He f$ is compatible with $J$.

\begin{lemma}
	\label{almostKahler}
	Let $(P, \zeta, \eta, \Phi, \eta\otimes \eta+g^\perp)$ be a contact metric structure. Suppose the metric and the almost complex structure, on $I\times P$,
	\begin{align*}
		g &=dt^2+ H^2(t)\eta\otimes \eta+ F^2(t) g^\perp,\\
		J &= \partial_t \otimes H\eta - \frac{1}{H}\zeta\otimes dt +\Phi,
	\end{align*}
	are almost K\"{a}hler. Then, 
	\[ FF'=H.\]
\end{lemma}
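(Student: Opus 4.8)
The plan is to use the fact that \emph{almost K\"ahler} amounts to three conditions on $(g,J)$---that $J^2=-\id$, that $g(JX,JY)=g(X,Y)$, and that the fundamental $2$-form $\omega(X,Y):=g(X,JY)$ is closed---and to observe that for the structure given here the first two hold automatically, so the single identity $FF'=H$ must be extracted entirely from $d\omega=0$. First I would record the action of $J$ on the frame adapted to $\partial_t$, the Reeb field $\zeta$, and the horizontal distribution $\ker\eta$. Reading off $J=\partial_t\otimes H\eta-\frac1H\zeta\otimes dt+\Phi$ gives
\[ J\partial_t=-\tfrac1H\,\zeta,\qquad J\zeta=H\,\partial_t,\qquad J|_{\ker\eta}=\Phi. \]
A one-line check using $g(\zeta,\zeta)=H^2$, $g(\partial_t,\partial_t)=1$, and the compatibility of $\Phi$ with $g^\perp$ confirms $J^2=-\id$ and $g(JX,JY)=g(X,Y)$, so these place no restriction on $H,F$.

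Next I would compute $\omega=g(\cdot,J\cdot)$ on the same frame. One finds $\omega(\partial_t,\zeta)=g(\partial_t,H\partial_t)=H$; all mixed pairings with horizontal vectors vanish because $\Phi$ preserves $\ker\eta$, which is $g$-orthogonal to $\mathrm{span}\{\partial_t,\zeta\}$; and on horizontal $X,Y$ the contact metric condition $g(X,\Phi Y)=d\eta(X,Y)$ from Section \ref{preliminary} yields $\omega(X,Y)=F^2 g^\perp(X,\Phi Y)=F^2\,d\eta(X,Y)$. Hence
\[ \omega=H\,dt\wedge\eta+F^2\,d\eta. \]

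Finally I would apply the exterior derivative. Since $H,F$ depend on $t$ alone, $dH\wedge dt=0$ and similar cross terms drop out; the identity $d(d\eta)=0$ kills $F^2\,d(d\eta)$; and $d(dt\wedge\eta)=-dt\wedge d\eta$. Thus $d\omega$ is a multiple of $dt\wedge d\eta$. The one point demanding care is the convention $2\,d\eta(X,Y)=(\nabla_X\eta)(Y)-(\nabla_Y\eta)(X)$ of Section \ref{preliminary}: applying it consistently to \emph{both} summands of $\omega$ produces $d\omega=(FF'-H)\,dt\wedge d\eta$ rather than the naive $(2FF'-H)\,dt\wedge d\eta$. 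Because the contact condition makes $d\eta$ nondegenerate on $\ker\eta$, the form $dt\wedge d\eta$ is nonzero, so $d\omega=0$ forces $FF'=H$.

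The anticipated obstacle is bookkeeping rather than ideas. The only genuine computation is the identification $\omega=H\,dt\wedge\eta+F^2\,d\eta$, and the sole place where an error would alter the conclusion is the numerical constant in $d\omega$, which is governed by the exterior-derivative convention for $d\eta$ recorded above; handling that factor correctly is exactly what distinguishes $FF'=H$ from $FF'=H/2$.
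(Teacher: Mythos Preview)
Your approach is exactly the paper's: write down the K\"ahler form, differentiate, and read off $FF'=H$ from the coefficient of $dt\wedge d\eta$. The only issue is the factor-of-2 bookkeeping, which you correctly flag as the delicate point but then mishandle.

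The inconsistency is in your formula $\omega=H\,dt\wedge\eta+F^2\,d\eta$. You obtained the coefficient $H$ from $\omega(\partial_t,\zeta)=H$ together with the evaluation $(dt\wedge\eta)(\partial_t,\zeta)=1$; that evaluation is the convention \emph{without} the $\tfrac{1}{2}$. But the identity $g(X,\Phi Y)=d\eta(X,Y)$ you invoke for the second summand is the paper's contact condition, stated in the convention \emph{with} the $\tfrac{1}{2}$ (Lemma~\ref{computeA}). So the two summands live in different conventions, and your expression for $\omega$ is not a well-defined $2$-form. In the paper's convention one has $(dt\wedge\eta)(\partial_t,\zeta)=\tfrac{1}{2}$, hence
\[
\omega \;=\; 2H\,dt\wedge\eta \;+\; F^2\,d\eta,
\]
which is precisely what the paper records (with $\omega^\perp=d\eta$). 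From here the exterior derivative is a straight Leibniz computation---no extra ``consistency'' correction is needed---and gives
\[
d\omega \;=\; -2H\,dt\wedge d\eta \;+\; 2FF'\,dt\wedge d\eta \;=\; 2(FF'-H)\,dt\wedge d\eta,
\]
so $d\omega=0$ forces $FF'=H$. Your attempted fix, adjusting $d(F^2\,d\eta)$ by a convention factor, is not where the $2$ belongs: once $\omega$ is written correctly as a form, $d$ acts by Leibniz and $d^2=0$ regardless of convention. The repair is to the coefficient of $dt\wedge\eta$ in $\omega$, not to the differentiation step.
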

\begin{proof}
	The contact structure implies $d\eta=  g^\perp(\cdot, \Phi(\cdot)):=\omega^\perp(\cdot, \cdot).$ Recall $\omega_g(X, Y)= g(X, JY)$, the K\"{a}hler form becomes, 
	\begin{align*}
		\omega_g &= 2dt\wedge H\eta+ F^2 \omega^\perp.
	\end{align*}
	Thus, $d\omega_g = -2H dt\wedge \omega^\perp +2FF' dt\wedge\omega^\perp$ and one deduces
	\[FF'= H.\]
\end{proof}

%In this section, we examine how the symmetry of $(M, g, J, f)$ induces certain symmetry on level sets of function $f$. By the analyticity of a GRS (see \cite{Ilocal, bando87}), the potential function $f$ is analytic.  By the regular level set theorem \cite{tubook11}, if $c$ is a regular value, then the level set is a smooth submanifold of codimension one. 

 We are ready to give the main proof of this section. 
 
\begin{proof}[Proof of Theorem \ref{HDcontact}]
By Lemma \ref{HDftransnormal}, $f$ is rectifiable and each level set is endowed with a deformed $K$-contact structure. Let $P$ be a differentiable manifold corresponding to a regular connected component of $f$. As in Subsection \ref{isoparasection}, there is a local diffeomorphism $\phi: I\times P\mapsto M$ such that
\[\phi^\ast(g)= dt^2+ g_t.\] 
for $(P, g_t)$ an one-parameter family of Riemannian metrics which are equal to the pullback of induced metric on nearby connected components. We observe, since $f$ is invariant by the Killing vector field $J(\nabla f)$
\begin{align*}
	0 &= \lie_{J\nabla f}\nabla f=-\lie_{\nabla f}J(\nabla f). 
\end{align*}
Therefore, $J(\nabla f)=-|\nabla f|_t\zeta_t$ is invariant by the flow generated by $\nabla f$. Thus, by Lemma \ref{identification}, $f'\zeta_t$ is identified with a fixed vector $\zeta$ on $P$. Furthermore, equations (\ref{Levolve}) and (\ref{LandHess}) and that $\nabla f$ an eigenvector of $\He f$ imply that the transverse subspace is also invariant as $t$ varies. Thus, we can fix a background one-form $\eta$ and write
\begin{align*} 
	g_t &= H^2(t)\eta\otimes \eta+ g^\perp_t,\\
	H &= f',
\end{align*} 
with $g^\perp_t$ being the restriction of $g_t$ to the transverse subspace. Since each is a deformed contact structure, restricted to the transverse subspace, $d(H\eta)(\cdot, \cdot)$ is a multiple of $g^\perp_t(J \cdot, \cdot)$. By equation (\ref{detaandhess}), $g_t^\perp$ is a multiple of $\He f$ restricted to the transverse subspace. Combining with equations (\ref{Levolve}) and (\ref{LandHess}), one deduces that $\ddt g_t^\perp$ is a multiple of $g_t^\perp$ with the scalar only depending on $t$. Thus, for a fixed transverse metric $g^\perp$,
\[g= dt^2+ H^2(t) \eta\otimes \eta+ F^2(t) g^\perp.\] 
In particular, nearby regular level sets are endowed with the deformation of the \textit{same} $K$-contact structure. 

Next, using (\ref{Levolve}), we compute the shape operator, for $\id$ denoting the identity on the transverse subspace and $m=\text{dim}_{\mathbb{C}}M-1$, 
\begin{align*}
	L_t &= \frac{H'}{H} \zeta \otimes \eta + \frac{F'}{F}\id.\,\\
{L'}_t &=\Big(\frac{H''}{H}-\big(\frac{H'}{H}\big)^2\Big) \zeta \otimes {\eta}+\Big(\frac{F''}{F}-\big(\frac{F'}{F}\big)^2\Big)\id;\\
\tr{L_t} &=\frac{H'}{H}+(2m)\frac{F'}{F},\\
	\tr{L^2_t} &=\big(\frac{H'}{H}\big)^2+(2m)\frac{(F')^2}{F^2},\\
	\tr{L'_t} &=\frac{H''}{H}+(2m)\frac{F''}{F}-\frac{(H')^2}{H^2}-(2m)\frac{(F')^2}{F^2}.
\end{align*}
By Theorem \ref{chardeformed}, at regular values, $F'\neq 0$ and $(P, \eta_t, \zeta_t, \Phi_t, g_t)$ is a deformed Sasakian structure. Combining (\ref{reducedsolitionsystem}) and Proposition \ref{Rcdeformed}, we have
\begin{align*}
\Rc^\perp -2\frac{H^2}{F^4} g_t &= 	\lambda g_t+\tr(L)g_t\circ L- g_t\circ L'+ f' g_t\circ L_t. 
\end{align*}
Since $g_t= F^2 g^\perp +H^2 \eta\otimes \eta$, one deduces that $\Rc^\perp= k g^\perp$. Consequently, the soliton equation (\ref{grs}) becomes
\begin{align}
	\lambda &= -\frac{H''}{H}-(2m)\frac{F''}{F}+f''\nonumber \\
	&= \frac{H^2}{F^4}(2m)-\frac{H''}{H}-2m\frac{H'F'}{HF}+f'\frac{H'}{H} \nonumber\\
	\label{auxiODE}
	&=\frac{k}{F^2}-\frac{H^2}{F^4}2- \frac{F''}{F}-(2m-1)\big(\frac{F'}{F}\big)^2-\frac{H'F'}{FH}+f'\frac{F'}{F}.
\end{align} 
By Lemma \ref{almostKahler}, the ODE system is augmented with additional constraints $FF'=H = f'$. It was investigated and solved explicitly via a transformation to a modified Calabi's ansatz in \cite{DW11coho} (see also \cite{tran23kahler}). In particular, $f$ is monotonic and one concludes that $f$ is transnormal and each level set is connected. 

Furthermore, there must be a finite $t$ where either $H\rightarrow 0$ or both $H, F\rightarrow 0$. By Lemmas \ref{pointsingularorbit} and \ref{fibercollapse} below, there is a Riemannian submersion from $(P, \eta, \zeta, \Phi, g^\perp+ \eta\otimes \eta)$ to a K\"{a}hler-Einstein manifold. Thus, the result follows.

\end{proof}

For the following results, one assumes the setup as in the proof of Theorem \ref{HDcontact}.

\begin{lemma}
	\label{pointsingularorbit}
	Let $I=(0, \epsilon)$ for $\epsilon>0$ and suppose that 
	\[\lim_{t\rightarrow 0^+} H(t)= \lim_{t\rightarrow 0^+} F(t)=0,\]
	and the metric can be extended smoothly to $t=0$. Then, $(P, \zeta, \eta, \Phi, g^\perp+ \eta\otimes \eta)$ is a Sasakian sphere with Hopf fibration over a complex projective space. 
\end{lemma}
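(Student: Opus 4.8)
The plan is to analyze the smoothness of the extended metric at the point $p_0$ to which the orbits collapse, and then to identify the rescaled limiting geometry with the standard flat picture on $\mathbb{C}^{m+1}$, where $m=\dim_{\mathbb{C}}M-1$. Since $H(t),F(t)\to 0$ as $t\to 0^+$, the whole level set $P$ shrinks to a single point $p_0\in M$; for small $t>0$ the orbit is therefore the geodesic distance sphere of radius $t$ about $p_0$, and in particular $P$ is diffeomorphic to $S^{2m+1}$. First I would record the leading-order asymptotics forced by smoothness: at a smooth point the link, namely the rescaled limit $\lim_{t\to 0^+}t^{-2}g_t$, must be the round unit-sphere metric, so every metric coefficient scales like $t$ with the same leading coefficient, i.e. $H'(0)=F'(0)$. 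Combining this with the constraint $FF'=H$ from Lemma \ref{almostKahler} and writing $F(t)=ct+O(t^3)$ gives $H=FF'=c^2t+O(t^3)$, whence $c^2=c$ and therefore $F'(0)=H'(0)=1$. Consequently the rescaled limit of $g_t=H^2\eta\otimes\eta+F^2g^\perp$ is exactly $\eta\otimes\eta+g^\perp$.

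Next I would identify this limiting metric, together with the fixed ($t$-independent) structure $(\zeta,\eta,\Phi)$, with the standard data on $S^{2m+1}$. Passing to geodesic normal coordinates at $p_0$ identifies $(T_{p_0}M,J_{p_0})$ with $\mathbb{C}^{m+1}$ and realizes $\exp_{p_0}$ as a map which is the identity to first order; hence the induced geometry on the distance spheres converges, after the rescaling above, to the geometry that $\mathbb{C}^{m+1}$ induces on its unit sphere. In particular $\eta\otimes\eta+g^\perp$ is the round metric on $S^{2m+1}$. For the structure, recall that $V=\partial_t$ is the unit radial field, corresponding under $\exp_{p_0}$ to the Euler direction, and that $J$ is parallel; thus $\zeta=-JV$ corresponds to $x\mapsto -J_{p_0}x$ on the unit sphere, which is precisely the generator of the Hopf circle action of $(\mathbb{C}^{m+1},J_{p_0})$. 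Therefore the Reeb foliation of $(P,\zeta,\eta,\Phi,g^\perp+\eta\otimes\eta)$ is the Hopf fibration $S^{2m+1}\to\mathbb{CP}^m$, and since $\Rc^\perp=kg^\perp$ the base is K\"{a}hler--Einstein, i.e. the Fubini--Study $\mathbb{CP}^m$. This exhibits $P$ as the standard Sasakian sphere, as claimed.

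The main obstacle will be making the two limiting identifications fully rigorous. One must justify that smoothness of the extended metric at $p_0$ genuinely forces the $t$-linear leading behaviour with equal coefficients in the $\eta$ and $g^\perp$ directions, which is the point-collapse smoothness analysis of \cite{DW11coho, tran23kahler}, and that the geodesic-normal-coordinate limit identifies not merely the Riemannian metric as round but the full almost contact triple $(\zeta,\eta,\Phi)$ with the standard CR/Sasakian structure inherited from $(\mathbb{C}^{m+1},J_{p_0})$. The parallelism of $J$ and the fact that $(\eta,g^\perp)$ were fixed as background data, independent of $t$, in the proof of Theorem \ref{HDcontact} are what make the structure, and not just the metric, converge; alternatively one could invoke Tanno's classification of simply connected Sasakian space forms \cite{tanno69sasa} once constancy of the $\Phi$-sectional curvature of the round limit is established.
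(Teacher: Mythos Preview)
Your strategy is correct and essentially parallels the paper's: collapse to a point implies the level sets are distance spheres, so $P\cong S^{2m+1}$; smoothness of the metric at $p_0$ forces the rescaled link to be round; and then one identifies the Sasakian structure with the standard one. The differences are in execution rather than in conception.

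Where you invoke the general principle that the rescaled limit $t^{-2}g_t$ must be the unit round metric and then use the K\"ahler constraint $FF'=H$ to pin down $H'(0)=F'(0)=1$, the paper instead carries out an explicit polar-coordinate computation at $p_0$: by intersecting with a $2$-plane through the origin in $\mathbb{R}^{2n}$ and equating $g_{xx}(p)+g_{yy}(p)$ with its Euclidean value, it obtains $H'(0)^2\eta\otimes\eta+F'(0)^2 g^\perp=g_{\mathrm{round}}$ directly, without invoking $FF'=H$. Your extra step buys the normalization $\eta\otimes\eta+g^\perp=g_{\mathrm{round}}$ itself, whereas the paper only gets that an $(H'(0),F'(0))$-deformation of it is round. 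For the final identification of the Sasakian structure, you attempt a direct argument via parallel transport of $J$ to match $\zeta=-JV$ with the Hopf Reeb field; the paper instead observes (via the curvature formulas of Proposition~\ref{Rcdeformed}) that since a deformation of $g^\perp+\eta\otimes\eta$ is round, the $\Phi$-sectional curvature is constant, and then quotes Tanno's classification \cite{tanno69sasa}. Your own ``alternative'' route through Tanno is exactly what the paper does, and is the cleaner way to close the argument; your direct identification is geometrically compelling but, as you note, requires more care to make rigorous because the diffeomorphism $\phi_t:P\to M_t$ and the exponential chart interact nontrivially.
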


The proof models after one for \cite[Theorem 4.3.3]{pe06book}. The idea is that, as $H, F\rightarrow 0$, the metric must become rounder and rounder. The rigidity of the deformed structure implies that it is actually round.  
\begin{proof}
Let $p$ denotes the point compactification at $t=0$. Since it is locally Euclidean around $p$, each level set of $f$ corresponds to a distance sphere. This also follows from the Morse lemma as \[\He f(p)= \lim_{t\rightarrow 0} f''(t) g.\]

Thus, $P$ is diffeomorphic to a sphere. Consequently, there is a diffeomorphism $\hat{\phi}$ from an open ball in $\mathbb{R}^{2n}$ to $M$ such that each Euclidean round sphere is mapped to a level set of $f$. 

Let's fixed a sphere with the standard round metric $(\mathbb{S}^{2n-1}, g_{round})$. In $\mathbb{R}^{2n}$, the intersection of a $2$-dimensional plane through the origin with $(\mathbb{S}^{2n-1}, g_{round})$ is a great circle $\mathbb{S}^1$ with coordinate $\theta$. That is,
\[g_{round}(\partial_\theta, \partial_\theta)=1.\]

Next, we consider the image of that plane via $\hat{\phi}$. Since $\hat{\phi}$ is a diffeomorphism, the image is a submanifold of dimension two with coordinates $t$ and $\theta$. Using polar coordinates
\begin{align*}
	x &= t \cos\theta,\\
	y &= t \sin{\theta},\\
	\partial_x &= \cos{\theta}\partial_t -\frac{1}{t}\sin\theta\partial_{\theta},\\
	\partial_y &= \sin{\theta}\partial_t +\frac{1}{t}\cos\theta\partial_{\theta}
\end{align*} 
Thus, for $g= dt^2+ H^2(t)\eta\otimes \eta+ F^2(t) g^\perp$
\begin{align*}
	g_{xx}:= g(\partial_x, \partial_x) &= \cos^2\theta+ \frac{1}{t^2}\sin^2{\theta} (H^2 \eta^2(\partial_\theta)+ F^2 g^\perp (\partial_\theta, \partial_{\theta})),\\
	g_{yy}:= g(\partial_y, \partial_y) &= \sin^2\theta+ \frac{1}{t^2}\cos^2{\theta} (H^2 \eta^2(\partial_\theta)+ F^2 g^\perp (\partial_\theta, \partial_{\theta})). 
\end{align*}
Letting $t\rightarrow 0$ yields
\begin{align*}
	g_{xx}(p) &= \cos^2\theta+ \sin^2{\theta} \big((H'(0))^2 \eta^2(\partial_\theta)+ (F'(0))^2 g^\perp (\partial_\theta, \partial_{\theta})\big),\\
	g_{yy}(p) &= \sin^2\theta+ \cos^2{\theta} \big((H'(0))^2 \eta^2(\partial_\theta)+ (F'(0))^2 g^\perp (\partial_\theta, \partial_{\theta})\big).
\end{align*}
Adding them together we deduce that, since $g(p)$ is independent of $\theta$,
\[1=(H'(0))^2 \eta^2(\partial_\theta)+ (F'(0))^2 g^\perp (\partial_\theta, \partial_{\theta})=g_{round}(\partial_\theta, \partial_\theta).\]
Since $\partial_{\theta}$ is arbitrary, \[H'(0)^2 \eta\otimes \eta+ F'(0)^2 g^\perp=g_{round}.\] 
By Proposition \ref{Rcdeformed}, $g^\perp+ \eta\otimes \eta$ has constant $\Phi$-holomorphic sectional curvature. By the classification of Tanno \cite{tanno69sasa}, it must be the standard round sphere with the contact structure given by the Hopf fibration over a complex projective space.   
	
\end{proof}

\begin{lemma}
	\label{fibercollapse}
Suppose that 
	\[\lim_{t\rightarrow 0^+} H(t)= 0 \text{ and } \lim_{t\rightarrow 0^+} F(t)\neq 0,\]
	and the metric can be extended smoothly to $t=0$. Then, $(P, \eta, \zeta, \Phi, g^\perp+ \eta\otimes \eta)$ is a Riemannian submersion over a K\"{a}hler-Einstein manifold. 
\end{lemma}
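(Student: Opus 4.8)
The plan is to realize the base $N$ as the singular orbit $M_0 = f^{-1}(0)$ and to exhibit the Sasakian structure on $P$ as the circle bundle that resolves the collapse. Since $H\to 0$ while $F$ stays bounded away from zero, the only degenerating direction of $g_t = H^2\eta\otimes\eta + F^2 g^\perp$ as $t\to 0^+$ is the Reeb direction $\zeta$, whereas the transverse part $F^2 g^\perp$ converges to a nondegenerate metric. First I would use the smooth extendability of $g$ at $t=0$ to conclude that $M_0$ is a smooth submanifold of codimension two, whose normal bundle carries polar coordinates $(t,\theta)$ with $\theta$ parametrizing the collapsing Reeb circle. In this disk-bundle picture the orbit of $\zeta$ at parameter $t$ is a smooth circle shrinking to a point of $M_0$ as $t\to 0$; smoothness of the extension forces these orbits to be closed with the induced circle action free, so the Reeb flow is periodic, the foliation $\mathcal{F}$ is regular, and its leaf space $N := P/\mathcal{F}\cong M_0$ is a smooth manifold.

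Next I would define the projection $\pi\colon P\to N$ by flowing each point along its $f$-segment and taking the limit at $t=0$; by Lemma \ref{identification} this is exactly the identification furnished by $\phi$, and its fibers are precisely the Reeb orbits. Because $\zeta$ is Killing, $\mathcal{F}$ is a Riemannian foliation (Prop.\ \ref{1dimKilling}) and $g^\perp$ is bundle-like, so $g^\perp$ descends to a metric $g_N$ on $N$ making $\pi$ a Riemannian submersion with circle fibers. The transverse complex structure $\Phi|_{E^\perp}$ is basic and, by the deformed-Sasakian identity of Theorem \ref{chardeformed} applied to the underlying Sasakian structure, projects to an integrable $J_N$ with $\nabla^N J_N=0$; hence $(N, g_N, J_N)$ is K\"{a}hler, and the relation $d\eta=\pi^\ast\omega_N$ records the curvature of the bundle.

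Finally, the Einstein condition is inherited directly from the computation already carried out in the proof of Theorem \ref{HDcontact}, where the reduced soliton system forced $\Rc^\perp = k g^\perp$. Since $\nabla^\perp$ is the Levi-Civita connection of the transverse metric (Subsection \ref{foliation}) and $g^\perp$ descends to $g_N$, the transverse Ricci curvature $\Rc^\perp$ is the pullback of $\Rc_N$; therefore $\Rc^\perp = k g^\perp$ yields $\Rc_N = k g_N$, so $(N, g_N, J_N)$ is K\"{a}hler--Einstein, as claimed.

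I expect the main obstacle to be the first step: rigorously passing from the analytic collapse $H\to 0$ to the statement that the Reeb foliation is genuinely \emph{regular} and the leaf space is a smooth manifold rather than merely an orbifold. This is precisely where the smooth extendability hypothesis at $t=0$ must be used in full, modeling the normal geometry on a trivialized disk bundle exactly as in the proof of Lemma \ref{pointsingularorbit} (following \cite[Theorem 4.3.3]{pe06book}). Once the smooth bundle structure is in place, the remaining steps are formal consequences of O'Neill's submersion formulas and the transverse K\"{a}hler geometry of Sasakian structures.
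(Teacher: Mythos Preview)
Your outline is essentially correct and parallels the paper's argument, but the paper dispatches your self-identified ``main obstacle'' by a different and much shorter device. Rather than arguing from the smooth extension that the Reeb orbits close up into a regular circle foliation, the paper notes that $H\to 0$ forces $\nabla f\to 0$, so the singular level set $N$ is exactly the zero locus of the Killing vector field $J(\nabla f)$. Kobayashi's theorem \cite{kobay58} then gives at once that $N$ is a smooth \emph{totally geodesic} submanifold of codimension two. With $N$ already a smooth manifold, the normal exponential map $\Pi_\epsilon^{-\epsilon}\colon M_\epsilon\to N$ is automatically a smooth focal map, and the limit computation $g_N(\tilde X,\tilde X)=\lim_{t\to 0}g_t(X,X)=F(0)^2 g^\perp(X,X)$ shows it is a Riemannian submersion with one-dimensional kernel spanned by $\zeta$. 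No separate discussion of regularity of the Reeb foliation or possible orbifold quotients is needed.

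For the K\"{a}hler structure on $N$ you appeal to the transverse K\"{a}hler geometry of a Sasakian manifold; the paper instead checks directly that the almost complex structure $J_N$ induced by $\Phi$ agrees with the restriction $J|_{TN}$ of the ambient complex structure, and then uses the totally geodesic conclusion (free from Kobayashi) to deduce that $J_N$ is parallel. Both routes work, but the paper's exploits the codimension-two Killing zero set in a way yours does not. The Einstein step via $\Rc^\perp = k g^\perp$ is the same in both approaches.
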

\begin{proof}
	By the ODE system (\ref{auxiODE}), the set of points corresponding to $t=0$ and $H\rightarrow 0$ corresponds to a level set, called $N$, of a critical value of $f$. $N$ is also the zero set of the Killing vector field $J(\nabla f)$ and, thus, by Kobayashi's \cite{kobay58}, is a totally geodesic submanifold of an even codimension with induced metric $g_N$.% be the induced metric of $g$ on $N$.
	
	Let's recall the diffeomorphism between nearby regular level sets, induced by the normal exponential map, $\Pi_c^{\epsilon}: M_c \mapsto M_{c+\epsilon}$. It will induce an onto submersion $\Pi^{-\epsilon}_{\epsilon}: M_{\epsilon}\mapsto N=M_0$ for sufficiently small $\epsilon$. Indeed, for a point $p\in N$, its pre-image via $\Pi^{-\epsilon}_{\epsilon}$ is the image of the exponential map at $p$ of the sphere of radius $\epsilon$ in its normal bundle \cite[Lemma 9]{wang87iso}.  %The focal/ conjugate locus of $M_c$ is the set of critical points of $\Pi_c$.
	
	Let $X$ be a vector field on $P$ and its identification on $M_\epsilon$ via $\phi$. Let $\widetilde{X}$ be the push-forward of $X$ via $\Pi^{-\epsilon}_{\epsilon}$. By continuity and Lemma \ref{identification}, %it is just the limit, as $t\rightarrow 0$, of $t$-identifications of the same vector field $X$ via $\phi$. Thus, 
	\begin{align*} 
		g_N(\widetilde{X}, \widetilde{X}) &= \lim_{t\rightarrow 0}g_t(X, X),\\
		&= F(0)^2 g^\perp (X, X).
	\end{align*}
	Thus, $\widetilde{X}=\vec{0}$ if and only if $X$ is a multiple of $\zeta$. Thus, at each point $q\in M_\epsilon$, the differential of $\Pi^{-\epsilon}_{\epsilon}$ is onto with an one-dimensional kernel. Thus, $N$ is of co-dimension two and $\Pi^{-\epsilon}_{\epsilon}$ is a conformal submersion.

	 It remains to show $(N, F(0)^2 g_N)$ is K\"{a}hler-Einstein with a compatible almost complex structure. Indeed, $\Phi$ induces an almost complex structure on $N$ by
	\[J_N \widetilde{X}:=\widetilde{\Phi X}.\]
	\textit{Claim:} $J_N (\widetilde{X})= J(\widetilde{X})$.
	
	\noindent \textit{Proof.} Without loss of generality, one assumes that $X\perp \zeta$ and, by the construction of $\Phi$, $\widetilde{\Phi X} = \widetilde{JX}.$ On the other hand, $\widetilde{X}=\lim_{t\rightarrow 0}(\phi_t)_{\ast} X$ and, by continuity,
	\begin{align*}
		J(\widetilde{X}) &= J (\lim_{t\rightarrow 0}(\phi_t)_{\ast} X)= \lim_{t\rightarrow 0} J ((\phi_t)_{\ast} X) \\
		&=  \lim_{t\rightarrow 0}(\phi_t)_{\ast} (\phi_t^\ast J X)=  \lim_{t\rightarrow 0}(\phi_c)_{\ast} (J X)\\
		&=  \widetilde{JX}.
	\end{align*}
	\textit{Claim:} $(N, g_N, J_N)$ is a K\"{a}hler-Einstein manifold.
	
	\noindent\textit{Proof.} By continuity, 
	\[\Rc_N (\widetilde{X}, \widetilde{X}) = F^2(0)\Rc^\perp(X,X)= k F^2(0) g^\perp(X, X)= k g_N (\widetilde{X}, \widetilde{X}). \]
	Thus, it is Einstein. Then $J_N$ is parallel since $J_N=J_{\mid TN}$, $J$ is parallel, and $(N, g_N)$ is totally geodesic.

\end{proof}

\section{Dimension four}
\label{dim4}
We restrict our investigation to real dimension four and great simplification occurs. 
\begin{lemma}
	\label{constantHe} Let $(M, g, J, f)$ be a K\"{a}hler GRS in real dimension four. Suppose that $f$ is rectifiable then along each regular connected component, $\He (f)$ has two constant eigenvalues, each of multiplicity two.
\end{lemma}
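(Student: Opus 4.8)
The plan is to produce a $J$-invariant orthogonal splitting of $TM$ adapted to $\nabla f$ on which $\He f$ is block-scalar, and then read off the constancy of the two block eigenvalues from the soliton conservation laws. Since $(M,g,J)$ is K\"{a}hler, $\Rc$ is $J$-invariant, hence so is $\He f = \lambda g - \Rc$; as a symmetric endomorphism this means $\He f$ commutes with $J$. By Proposition \ref{rectequivalent}, rectifiability of $f$ gives that $V := \nabla f/|\nabla f|$ is an eigenvector of $\Rc$, and therefore of $\He f$, wherever $\nabla f \neq \vec{0}$; call its eigenvalue $\mu$. Because $\He f$ commutes with $J$, the vector $JV$ is an eigenvector with the same eigenvalue, so $E_1 := \mathrm{span}\{V, JV\}$ is a $J$-invariant eigenplane. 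Its orthogonal complement $\mathcal{H}$ is again $J$-invariant, and here the hypothesis $\dim_{\mathbb{R}} M = 4$ is essential: $\mathcal{H}$ is a single complex line, i.e.\ a real $2$-plane, and a symmetric endomorphism of a real $2$-plane commuting with the rotation $J$ must be a scalar multiple $\nu\,\id$ of the identity. Thus $\He f$ has the two eigenvalues $\mu$ (on $E_1$) and $\nu$ (on $\mathcal{H}$), each of multiplicity two.

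It then remains to prove $\mu$ and $\nu$ are constant along each regular connected component $M_c$. For $\mu$: since $f$ is rectifiable, $|\nabla f|^2$ agrees locally with $b(f)$ for a smooth $b$ (Remark \ref{smoothb}), whence
\[ \mu = \He f(V, V) = \frac{\He f(\nabla f, \nabla f)}{|\nabla f|^2} = \frac{\nabla_{\nabla f} |\nabla f|^2}{2|\nabla f|^2} = \frac{1}{2} b'(f), \]
a function of $f$ alone and so constant on $M_c$. For $\nu$: taking the trace gives $2\mu + 2\nu = \tr \He f = \triangle f$, and by (\ref{deltaS}) $\triangle f = 4\lambda - \SS$. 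The conservation law (\ref{nablafandS}) forces $\SS + |\nabla f|^2 - 2\lambda f$ to be constant, so $\SS$ is constant on $M_c$ (as $f$ and $|\nabla f|^2 = b(f)$ are); hence $\triangle f$, and therefore $\nu = \tfrac{1}{2}\triangle f - \mu$, is constant along $M_c$.

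I expect the one genuinely dimension-specific step to be the linear-algebra reduction on $\mathcal{H}$: it is precisely the coincidence that in complex dimension two the $g$-orthogonal complement of the complex line generated by $\nabla f$ is itself a single complex line that collapses the possible eigenvalues there to one. In higher dimensions $\mathcal{H}$ would carry several distinct eigenvalues, which is consistent with the remark after Theorem \ref{main1} that the dimension assumption is crucial. The remaining inputs—$J$-invariance of $\He f$, the eigenvector property of $V$, and the two soliton identities—are all available from earlier in the paper, so the argument is essentially an assembly of these facts around that single dimensional observation.
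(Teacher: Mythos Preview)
Your proof is correct and follows essentially the same route as the paper's: both establish that $\nabla f$ is an eigenvector of $\He f$, invoke $J$-invariance together with the real-dimension-four constraint to force exactly two eigenvalues of multiplicity two, compute the first eigenvalue as $\tfrac12 b'(f)$, and then use the trace identity $\triangle f = n\lambda - \SS$ combined with (\ref{nablafandS}) to conclude constancy of the second. Your version is slightly more explicit about the linear algebra on the orthogonal complex line $\mathcal{H}$, but the argument is the same.
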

\begin{proof}
	Let $M_c$ be a regular connected component of $f$. Since $|\nabla f|$ is constant on $M_c$, for any vector field $X$ tangent to $M_c$, 
	\[\text{Hess (f)}(\nabla f, X)=0.\]
	Following Proposition \ref{rectequivalent}, $b(f)=|\nabla f|^2$ is locally defined for nearby connected components and it is smooth. Thus,  
	\begin{align*}
		\text{Hess (f)} (\nabla f, \nabla f) &=g(\nabla_{\nabla f} (\nabla f), \nabla f)\\
		&= \frac{1}{2}\nabla_{\nabla f}|\nabla f|^2 = \frac{1}{2}b'(c)|\nabla f|^2.		
	\end{align*}
	Therefore, $\nabla f$ is an eigenvector with eigenvalue $\frac{1}{2}b'(c)$. As each tangent space is of dimension $4$ and $\text{Hess f}$ is $J$-invariant, $\text{Hess f}$ has exactly two eigenvalues each of multiplicity two. By equations (\ref{deltaS}) and (\ref{nablafandS}), 
	\[\Delta f=\tr (\He (f))=n\lambda-|\nabla f|^2+ 2\lambda f- \text{a constant}.\] Thus, it is also fixed along $M_c$ and the result then follows.  
	
	%Since $f$ is isoparametric, the eigenvalues are constant along each $M_c$. The result then follows from equation (\ref{grs}). 

\end{proof}

%\begin{lemma}
%	\label{constantRc} Let $(M, g, J, f)$ be a  in real dimension four. Suppose that $f$ is rectifiable then the intrinsic $\Rc$ of each regular connected component has two constant eigenvalues.
%\end{lemma}
We are now ready to classify all K\"{a}hler GRS with a rectifiable potential function $f$. 

\begin{proof}[Proof of Theorem \ref{main1}]
	Let $M_c$ be a regular connected component of $f$. Then, $(M_c, \zeta_c, \eta_c, \Phi_c, g_c)$ via (\ref{standardconstruction}) is an almost contact metric structure. Along a regular connected component, $|\nabla f|$ is a nonzero constant and we have, for $L$ denoting the shape operator,	
	\begin{align*}
		\He (f)(X,Y)&=|\nabla f|g(L X, Y)+\frac{X|\nabla f|}{|\nabla f|}g(\nabla f, Y),\\
		&= |\nabla f|g(L X, Y). 
	\end{align*}
	
	By Lemma \ref{constantHe}, $\He (f)$ has constant eigenvalues $\mu_1, \mu_2$ with $\zeta$ as one of the eigenvector. Therefore,
	\[ L= \frac{1}{|\nabla f|} ((\mu_1-\mu_2)\zeta\otimes \eta+ \mu_2 \id).\] 
	
Consequently, $L$ satisfies the condition of Theorem \ref{hyperdeformedSasa} and $(M_c, \zeta_c, \eta_c, \Phi_c, g_c)$ must be either a deformed Sasakian structure or locally a Riemannian product. By continuity, nearby regular connected components must all be of the same type. Thus, we consider two cases.

%for example, the Sasakian flat metric is of the form $g^\perp +(dz-\sum_i y_i dx_i)\times (dz\sum_i y_i dx_i)$. Thus, the length of vector $\frac{\partial}{\partial x_i}$ is dependend on $y_i$. Thus, a  sequence of such metrics can't converge to a product metric.
	
		\textbf{Case 1:} $\mu_2\neq 0$ and each regular connected component is a deformed Sasakian structure. By Theorem \ref{HDcontact}, each is a Riemannian submersion over a K\"{a}hler-Einstein manifold $(N, g_N, J_N)$. Since $N$ is of real dimension two, it must have constant curvature. Thus, the universal Sasakian model must be one of the three standard forms in dimension three \cite{tanno69sasa}. The soliton equation (\ref{grs}) then reduces to ODEs which can be solved explicitly \cite{DW11coho, tran23kahler} and, in particular, there is a singular orbit (two iff the manifold is compact).

		\textbf{Case 2:} $\mu_2=0$ and each $\zeta_c$ is parallel with respect to the induced metric which is locally a Riemannian product. As in the proof of Theorem \ref{HDcontact}, we write 
		\begin{align*} 
			g &=dt^2+ g_t,\\
			g_t &= H^2(t)\eta\otimes \eta+ g^\perp_t.
		\end{align*} 
		Since $\mu_2=0$, equations (\ref{Levolve}) and (\ref{LandHess}) implies that 
		\[\ddt g^\perp_t =0.\]
		Thus, the metric is a local Riemannian product preserving the almost complex structure
		\[g=(dt^2+ H^2(t)\eta\otimes \eta)+g^\perp.\] By the soliton system (\ref{reducedsolitionsystem}), $g^\perp$ has constant Ricci curvature and, thus, constant curvature (equal to $\lambda$) due to its low dimension. Also, the soliton equation restricted to first factor resembles one in dimension two: a warped product with warping function $|f'|=|\nabla f|$. The analysis of that ODE is done in \cite{BM15} and, when the curvature is non-constant, the completeness implies a singular critical level set consisting of a point at, without loss of generality, $t=0$. Accordingly, on $M$, the critical level set of $f$ at $t=0$  corresponds to a $2$-dimensional surface $N$ with the $t$-invariant metric $g^\perp$. Furthermore, each integral curve of $J(\nabla f)$ is homeomorphic to a circle. It follows that the soliton splits into a Riemannian product: 
		 	\[(M_1, dt^2 + H^2(t) \eta\otimes \eta)\times (N, g_N, J_N).\]  
		
		When the curvature is constant, then $\He (f)$ is a constant multiple of the metric. Thus, for some constant $a$, 
		\[f'=\lambda t+a\]
		If $\lambda\neq 0$, then $f$ has a critical point and one argues as above to obtain a product structure. Otherwise, $\lambda=0$ and, consequently, the curvature of both  $(dt^2+ H^2(t)\eta\otimes \eta)$ and $g^\perp$ is vanishing. Then, $g$ is flat, a contradiction.

		% Thus, locally the metric can be written as
		%\begin{align*}
		%	g &= dt^2+ H^2(t) \eta\otimes \eta + g_N(t),
		%\end{align*}
		%for $(N, g_N(t), J_N(t))$ an one-parameter family of K\"{a}hler metrics and $d\eta=0$. Furthermore, $J_N = \Phi_{\mid TN}=J_{\mid TN}$. Since $\omega_g(X, Y)= g(X, JY)$, the K\"{a}hler form becomes
		%\begin{align*}
		%	\omega_g &= 2dt\wedge H\eta+ \omega_N (t).
		%\end{align*}
	%Thus, $d\omega_g = dt\wedge \frac{d \omega_N(t)}{dt}$ and one deduces that $\omega_N$ is independent of $t$. Comparing covariant and ordinary derivatives yield, for $t$-independent vector fields $X, Y$ on $TN$,
	%\begin{align*}
	 %0 &= \ddt (\omega_N(X, Y))=\ddt (g_N(X, J_N Y))=\nabla_{\partial t}(g_N(X, J_N Y))\\
	 % &=(\nabla_{\partial t} g_N(t))(X, J_N Y)+g_N(X,  \nabla_{\partial_t}(JY))\\ 
	 % &=(\ddt g_N(t))(X, J_N Y).
	% \end{align*}
	%As a consequence, $g_N$ is independent of $t$ and the soliton splits into a Riemannian product
	%\[(M_1, dt^2 + H^2(t) \eta\otimes \eta)\times (N, g_N, J_N).\]
	%By the soliton system (\ref{reducedsolitionsystem}), $(N, F^2 g_N)$ has constant Ricci curvature and, thus, constant curvature due to its low dimension. 

\end{proof}

\begin{proof}[Proof of Corollary \ref{simply}] By Theorem \ref{main1}, as the metric is irreducible, $(M, g, J, f)$ must be foliated by equidistant level sets of $f$ and a regular one   is quotient of a connected deformed Sasakian structure and there is exactly one singular level set. By Theorem \ref{HDcontact}, it is a Riemannian submersion, with circle fibers, over a K\"{a}hler-Einstein manifold $(N, g_N, J_N)$.  

\textit{Claim:} If $M$ is non-compact, then $N$ is simply connected.
 
\textit{Proof of the claim:} Let $P$ denote the topological manifold of a regular level set. Since $M$ is non-compact, it is is constructed by collapsing at one end of $P\times [0, \infty]$.  If the singular set is a point, the result follows immediately from Theorem \ref{HDcontact}. If not, $M$ could be written as a union of an one-sided tubular neighborhood around the singular level set, diffeomorphic to $N$, and $P\times (0, \infty)$. The intersection is diffeomorphic to $P\times (0, \epsilon)$. It is immediate to construct a deformation retract from the tubular neighborhood to $N$ and from each of $P\times (0, \epsilon)$ and $P\times (0, \infty)$ to $P$. Consequently, Seifert-Van Kampen theorem deduces that, for $\pi_1$ denoting the fundamental group,  
\[ \pi_1 (M)= \pi_1(N)\ast_{\pi_1(P)}\pi_1(P). \] 
Since $M$ is simply connected, so is $N$. 

Thus, $N$ is simply connected; $(N, g_N, J_N)$ is of constant curvature, and its isometry group is of dimension $2^2-1=3$. Using Theorem \ref{HDcontact} and the simply-connectedness of $N$, the action lifts to one on each regular level set and combines with the circle action generated by  $J(\nabla f)$ to forms an automorphism group on the deformed Sasakian structure; see \cite[Lemma 5.1]{tanno69} and its refinement \cite[Proposition 7.2.2 and Theorem 8.1.8]{BGbookSasakian08}. Thus, each regular level set is a homogeneous deformed Sasakian structure and the isometry group of $(M, g, J, f)$ is precisely this automorphism group of dimension $3+1=4$.

\end{proof}
\begin{proof}[Proof of Corollary \ref{coho1}]
	It follows from \cite{PW09grsym} that, for $\lambda\neq 0$ and irreducible GRS, the isometry group preserves the potential function $f$. The same conclusion also holds if the group is compact via an average argument as shown in \cite{DW11coho}.
	
	As the structure is of cohomogeneity one, $f$ and $\SS$ have same level sets and are functionally dependent. The result then follows from Theorem \ref{main1}.  
\end{proof}

\subsection{Statements and Declarations:} The work was partially supported by grants from the Simons Foundation [709791], the National Science Foundation [DMS-2104988], and the Vietnam Institute for Advanced Study in Mathematics.

%The authors have no competing interests to declare that are relevant to the content of this article.

\def\cprime{$'$}
\bibliographystyle{plain}
\bibliography{bioMorse}
\end{document}